\newcolumntype{x}[1]{!{\centering\arraybackslash\vrule width #1}}
\newtheorem{theorem}{Theorem}
\newtheorem{remark}{Remark}
\newtheorem{corollary}{Corollary}
\newtheorem{definition}{Definition}
\newtheorem{proposition}{Proposition}
\newtheorem{example}{Example}
\newtheorem{construction}{Construction}
\newcommand{\N}{{\mathbb N}}
\newcommand{\R}{{\mathbb R}}
\newcommand{\Z}{{\mathbb Z}}
\newcommand{\E}{{\mathbb E}}
\newcommand{\tr}{\text{tr}}
\newcommand{\C}{\mathbb C}
\newcommand*\diff{\mathop{}\!\mathrm{d}}
\newcommand{\eqd}{\stackrel{d}{=}}
\NewDocumentCommand{\ceil}{s O{} m}{%
  \IfBooleanTF{#1} % starred
    {\left\lceil#3\right\rceil} % \ceil*[..]{..}
    {#2\lceil#3#2\rceil} % \ceil[..]{..}
}
\definecolor{edgecolour}{RGB}{39,64,112}
\definecolor{dualvertexcolour_pentagon}{RGB}{205,79,57}
\definecolor{dualvertexcolour_hexagon}{RGB}{79,205,57}
\tikzstyle{pentagon}=[circle,draw,inner sep=0pt, fill=dualvertexcolour_pentagon, minimum width=1.5mm]
\tikzstyle{hexagon}=[circle,draw,inner sep=0pt, fill=dualvertexcolour_hexagon, minimum width=1.5mm]
\tikzstyle{facet}=[circle,draw,inner sep=0pt, minimum width=1.5mm]
\tikzstyle{edge}=[draw, color=edgecolour, line width=0.3mm]
\let\@fnsymbol\@arabic
\newcommand{\specificthanks}[1]{\@fnsymbol{#1}}
\begin{document}

\title{\textbf{Random eigenvalues of nanotubes}}
\insert\footins{\footnotesize \noindent Artur Bille \hspace*{3cm} Pavel Ievlev \\ 
\texttt{artur.bille@uni-ulm.de} \hspace*{0.9cm} \texttt{ievlev.pn@gmail.com}  \vspace*{0.2cm} \\  
Victor Buchstaber \hspace*{2cm} Svyatoslav Novikov \hspace*{2cm} Evgeny Spodarev\\ 
\texttt{buchstab@mi-ras.ru} \hspace*{1.6cm} \texttt{ Svyatoslav.Novikov@unil.ch}\hspace*{1.5cm} \texttt{evgeny.spodarev@uni-ulm.de}  \vspace*{0.2cm}}

\author{Artur Bille\thanks{Ulm University, Germany \quad\quad *Corresponding author}\textsuperscript{\hspace{0.15cm}*}\and 
Victor Buchstaber\thanks{Steklov Mathematical Institute RAN, Russia} \and 
Pavel Ievlev\thanks{Université de Lausanne, Switzerland}\and 
Svyatoslav Novikov\textsuperscript{\specificthanks{3}} \and
Evgeny Spodarev\textsuperscript{\specificthanks{1}}
}
\date{}
\maketitle

\begin{abstract}
\label{abstract}
\noindent The hexagonal lattice and its dual, the triangular lattice, serve as powerful models for comprehending the atomic and ring connectivity, respectively, in \textit{graphene} and \textit{carbon $(p,q)$--nanotubes}.
The chemical and physical attributes of these two carbon allotropes are closely linked to the average number of closed paths of different lengths $k\in\N_0$ on their respective graph representations.
Considering that a carbon $(p,q)$--nanotube can be thought of as a graphene sheet rolled up in a matter determined by the \textit{chiral vector} $(p,q)$, our findings are based on the study of \textit{random eigenvalues} of both the hexagonal and triangular lattices presented in \cite{bille2023random}. 
This study reveals that for any given \textit{chiral vector} $(p,q)$, the sequence of counts of closed paths forms a moment sequence derived from a functional of two independent uniform distributions.
Explicit formulas for key characteristics of these distributions, including probability density function (PDF) and moment generating function (MGF), are presented for specific choices of the chiral vector.
Moreover, we demonstrate that as the \textit{circumference} of a $(p,q)$--nanotube approaches infinity, i.e., $p+q\rightarrow \infty$, the $(p,q)$--nanotube tends to converge to the hexagonal lattice with respect to the number of closed paths for any given length $k$, indicating weak convergence of the underlying distributions. 
\end{abstract}
\textbf{Keywords:}  graphene, regular hexagonal lattice, nanotube, method of moments, spectrum, random eigenvalue, characteristic function.
\noindent
\textbf{MSC2020:}  \textbf{Primary:} 05C10;  \textbf{Secondary:} 05C63,  05C38, 44A60, 92E10. 

% 05C10 -- Planar graphs; geometric and topological aspects of graph theory
% 05C63 -- Infinite graphs
% 05C38 -- Paths and cycles
% 44A60 -- Moment problems
% 33C05 -- Classical hypergeometric functions, ${}_2 F_1$
% 92E10 -- Molecular structure (graph-theoretic methods, methods of differential topology, etc.)

\section{Introduction}\label{sec:introduction}
Over the past three decades, significant progress has been made in the synthesis of novel carbon structures, resulting in the awarding of two Nobel Prizes. 
In 1996, Kroto, Curl, and Smalley were honored with the Nobel Prize in Chemistry for their pioneering roles in discovering \textit{fullerenes}, while in 2010, Novoselov and Geim received the Nobel Prize in Physics for their groundbreaking work in isolating and characterizing \textit{graphene}.
Another noteworthy carbon allotrope, \textit{carbon nanotube}, was also discovered in the previous century. 
Though there remains a debate over the exact pioneers of this discovery, it is widely accepted that Radushkevich and Lukyanovich's 1952 publication, featuring an explicit image of a carbon nanotube, marked a pivotal milestone.

%These achievements, along with many others in science, are built upon fundamental research conducted by numerous scientists before them. 
%For instance, Eiji Osawa's theoretical prediction of the \textit{Buckyball fullerene} $C_{60}$ in 1970, and Grimme et al.'s comprehensive analysis of all 1812 $C_{60}$--isomers in 2017, which established topological indices, geometrical measures, and theoretical electronic properties as \textit{good} stability predictors for fullerenes, formed the theoretical basis and motivation for future breakthroughs.

In this paper, we present novel results about the combinatorial properties of $(p,q)$--nanotubes, considering them as important examples of fullerenes and precursors to graphene, as illustrated in Figure \ref{fig:overview}. 
For this, we apply methods derived from spectral graph theory and probability theory.
Unless explicitly stated otherwise, when referring to fullerene, infinite $(p,q)$--nanotube, and graphene, we mean their corresponding graph representations henceforth.

\begin{figure}[H]
\begin{center}
\includegraphics[scale=0.8]{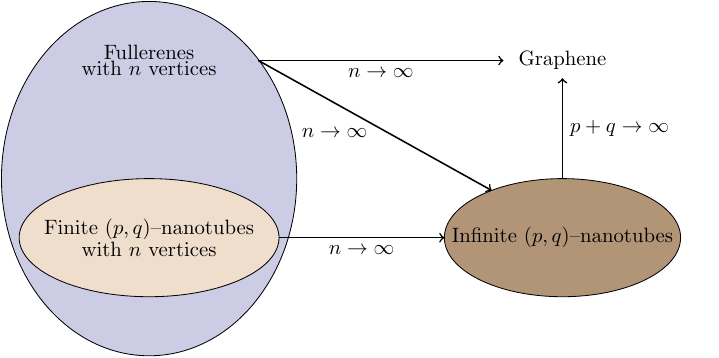}
\caption{Overview of links between finite fullerenes (left) and infinite hexagon arrangements (right).}\label{fig:overview}
\end{center}
\end{figure}

One key aspect of spectral graph theory is the study of a graphs' properties in relation to the eigenvalues of different matrices representing this graph.
The most common example is the adjacency matrix $A_n$ representing a graph $G_n$ with $n$ vertices, and its eigenvalues $\lbrace\lambda_j\rbrace_{1\leq j\leq n}$. 
The total number of closed paths of length $k$ starting at any vertex of $G_n$ is given as:
\begin{align*}
\tr \left(A_n^k\right) = \sum_{j=1}^n \lambda_j^k ,\quad k\in\N_0.
\end{align*}
As shown in \cite{Bille2020SpectralCO}, the shape of fullerenes, quantifiable via these power sums of different powers $k$ (known as \textit{Newton polynomials}) correlates significantly with their chemical and physical properties, notably their stability.

Let $\lbrace A_n\rbrace_n$ be a sequence of adjacency matrices, each representing a fullerene with $n$ vertices.
As the number of vertices $n$ tends towards infinity, the Newton polynomials $\tr\left(A_n^k\right)$ for any $k$ also approach infinity. 
However, normalizing by $n$ yields the average number of closed paths of length $k$ across all vertices of the graph:
\begin{align*}
\mu_k\left( A_n \right)
:= 
\frac{1}{n} \sum_{j=1}^n \lambda_j^k ,
\quad k\in\N_0.
\end{align*}
These counts can also be viewed as the $k^{th}$ moments of a discrete random variable $\Lambda_{G_n}$, referred to as the \textit{random eigenvalue} of $G_n$, with cumulative distribution function $F_{\Lambda_{G_n}}$:
\begin{align*}
\mu_k \left(A_n\right) 
= 
\mu_k\left(\Lambda_{G_n}\right) 
:= 
\E \Lambda_{G_n}^k = \int_\R x^k \diff F_{\Lambda_{G_n}}(x),\quad k\in\N_0.
\end{align*} 
Two natural questions arise: first, whether these moments $\mu_k\left( \Lambda_{G_n} \right)$ converge as $n$ increases, and if so, whether there exists an infinite graph $L$ whose average number of closed paths of length $k$ coincides with the limit of $\lbrace \mu_k\left(\Lambda_{G_n}\right) \rbrace_n$ as $n\to \infty$, for every $k\in\N_0$. 
The latter question can be restated as whether there exists a random eigenvalue $\Lambda_{L}$ such that $\Lambda_{G_n}$ converges weakly to $\Lambda_L$. 
Note that, since the vertex degrees of all the graphs and lattices considered here are bounded by $9$, the support of all random eigenvalues is compact due to Gershgorin's circle theorem. 
Consequently, and as a result of the Weierstrass approximation theorem, weak convergence $\Lambda_{G_n} \to \Lambda_L$ as $n\to \infty$ is equivalent to the convergence of moments $\mu_k\left(\Lambda_{G_n} \right) \to \mu_k\left(\Lambda_{L}\right)$ as $n \to \infty$ for all $k\in\N_0$.

Note that the above convergence is entirely determined by the choice of the sequence of fullerenes $\lbrace G_n \rbrace$, and this choice is not obvious at all.
In \cite[Conjecture 1]{bille2023random}, the authors state that the random eigenvalues of a sequence of uniformly randomly chosen fullerenes $\lbrace G_n \rbrace$ converge to the random eigenvalue of the hexagonal lattice, i.e., it holds
\begin{align*}
\mu_{2k} \left(\Lambda_{G_n}\right) 
\xrightarrow[n\to\infty]{}
\sum_{k_1+k_2+k_3=k}\binom{k}{k_1,k_2,k_3}^2,
\end{align*}
and $\mu_{2k+1}\left( \Lambda_{G_n}\right) = 0$ for all $k\in\N_0$.
Randomness in the choice of $G_n$ is crucial here, since one can easily construct a deterministic sequence of fullerenes whose moments $\mu_k$ either do not converge to the moments of the hexagonal lattice or do not converge at all.
 
As the number of vertices $n$ approaches infinity, the surface of fullerenes becomes a union of a vast number of hexagons (alongside twelve pentagons). 
Consequently, the primary focus shifts towards understanding the possible arrangements of these (infinitely) numerous hexagons, thereby revealing the structure of a large fullerene. 
The two extreme cases concerning the number of closed paths on infinitely large hexagon arrangements are as follows: 
$(a)$ The aforementioned hexagonal lattice, which has the smallest average number of closed paths, cf. \cite{bille2023random}; 
$(b)$ The $(p,q)$--nanotube with the smallest achievable \textit{circumference}, defined as $p+q$, and consequently, the greatest number of closed paths.

As presented in \cite{Grimme17}, the chemical stability of fullerenes is closely related to their shape, which is, in turn, determined by the location of the twelve pentagons on their surface. 
Loosely speaking, a more uniform distribution of pentagonal facets generally results in greater chemical stability. 
Therefore, the facet connectivity of fullerenes, particularly  $(p,q)$--nanotubes, is of special interest for chemists and physicists. 
Thus, from here on, we consider only the dual graphs of fullerenes, infinite $(p,q)$--nanotubes, and the hexagonal lattice. 
To emphasize this, we mark the corresponding variables related to these duals with a superscript ${}^*$. 

As shown in \cite{bille2023random}, it is easy to verify that
\begin{align*}
\mu_{2k}\left( A_n \right)
&= 
\mu_{k}\left(A_n^* + \frac{1}{2}D_n^*\right)\quad \text{and}\quad \mu_{2k+1}\left( A_n \right) = 0,\quad k\in\N_0,
\end{align*}
where $A_n$ is the adjacency matrix of a fullerene, and $A_n^*$ and $D_n^*$ denote the adjacency and degree matrix, respectively, of the corresponding dual fullerene. 
From a geometrical standpoint, this means adding $\sfrac{\text{deg}(v)}{2}$ loops to every vertex $v$ of the dual fullerene, which can also be viewed as a single loop with weight $\sfrac{\text{deg}(v)}{2}$.

This paper is structured as follows: 
first, we establish the formal framework needed to analyze the random eigenvalues $\Lambda_{p,q}^*$ of dual $(p,q)$--nanotubes, building on prior results concerning the hexagonal lattice and its dual. 

Section 3 presents the main results of this study, demonstrating the existence of limiting random eigenvalues for any given chiral vector $(p,q)$. 
It provides explicit functional representations of these random variables based on two independent uniform distributions and presents simpler expressions in case of zigzag and armchair nanotubes.

Next, we deduce three distinct formulas for the moments of $\Lambda_{p,q}^*$. 
The first formula is derived purely from combinatorial considerations, while the second and third formulas are obtained by computing a triple integral using techniques from real and complex analysis, respectively.  

We then present a general formula for the MGF of a random eigenvalue $\Lambda_{p,q}^*$ of an infinite dual $(p,q)$--nanotube, expressed as an integral over a discrete version of the modified Bessel function. 
Additionally, we demonstrate that the limit of these MGFs as $p+q\to \infty$ aligns with results about the hexagonal and triangular lattices presented in \cite{bille2023random}.

The paper concludes with explicit formulas for the PDFs of $\Lambda_{p,q}^*$ in case of zigzag ($q=0$) and armchair ($p=q$) nanotubes. 
For all other nanotubes, we propose a numerical algorithm to compute the PDFs and illustrate this algorithm using the dual infinite $(5,1)$--nanotube.

\section{Preliminaries}\label{sec:preliminaries}
In this section, we lay down the formal framework for our analysis by defining all objects under examination. 
To begin, we revisit some fundamental concepts from graph theory. 
We focus solely on undirected graphs denoted as $G_n=(V(G_n),E(G_n))=(V,E)$, i.e., a collection of vertices $V$ with $|V|=n$, and edges $E$, where each edge $(u,v)$ is indistinguishable from $(v,u)$ for all $u,v\in V$.

If the endpoints of an edge coincide, the edge is called a \textit{loop}. 
We also consider graphs with infinitely many vertices, each possessing finite degree, and arranged in such a way that they form a regular tiling. We refer to these locally finite graphs as \textit{(convex) lattices}. 
Their regularity ensures that the neighborhoods of a given size around any two vertices are isomorphic.

\subsection{Fullerenes, $\boldsymbol{(p,q)}$--nanotubes and graphene}
Recall some important properties of \textit{fullerenes}, defined as simple convex polytopes with $n$ vertices composed solely of pentagonal and hexagonal facets.
By $C_n$ we denote the set of all combinatorially non--equivalent fullerenes called \textit{isomers}. 
In other words, every element of $C_n$ is an equivalence class with respect to isomorphisms. 
For a more in-depth study of these polytopes, readers are referred to \cite{Bille2020SpectralCO,AndKarSkr16,BuchstEro}.

To enumerate all isomers within the set $C_n$, it is common to assume that the facets of each element in $C_n$ can be arranged in a so--called \textit{facet spiral sequence}. 
This sequence is a specific ordering of facets where each facet shares an edge with its neighbors and is represented by twelve integers indicating the positions of pentagons. Alternatively, the sequence can be expressed by a sequence of fives and sixes, where each number indicates whether the corresponding facet in the spiral is a pentagon or a hexagon.
A given fullerene may have multiple facet spirals, so the lexicographical smallest one is chosen to represent this fullerene. 
Here, the notation $C_{n,j}$ then refers to the $j^{\text{th}}$ isomer in $C_n$ according to the lexicographical order of its facet spiral sequence.
It is well known, however, that fullerenes which do not conform to this assumption exist (the first counterexample has $n=380$ vertices, see \cite{HouseofGraphs}).
To date, most studies analyzing specific fullerenes focus on examples with significantly fewer than $380$ vertices. 

Employing the so-called \textit{Schlegel projection}, which preserves vertex connectivity, fullerenes can be uniquely projected onto the two--dimensional plane, effectively treated as 3-regular, connected planar graphs. 
Here, equality and uniqueness of graphs is understood up to an isomorphism.

Additionally, fullerenes always contain twelve pentagons, regardless of the number of vertices $n\geq 20$, which must be even, $n\not= 22$, alongside with $\sfrac{n}{2}-10$ hexagons. 

For given non-negative integers $p,q$, we call a fullerene a \textit{finite $(p,q)$--nanotube}, if it features two cycles of edges, such that cutting the fullerene along these edges results in two distinct connected components, referred to as \textit{caps}, each containing six pentagons and a varying count of hexagons, along with a third component composed entirely of hexagonal facets, forming a tube. 
Figure \ref{fig:nanotube_decomposition} illustrates this decomposition for a finite $(5,0)$--nanotube.
The precise role of $p$ and $q$ in determining the boundary and shape of the tubular part is a technical matter detailed in \cite[Definition 2.7.2]{Buchstaber_2017}. 
For the purpose of this research, it is sufficient to understand the function of $p$ and $q$ provided later with the definition of the \textit{chiral vector}.

A dual fullerene, established in a broader setting as unique e.g. in \cite{whitney32}, forms a triangulation of the plane with vertices of degrees five or six. 
The set of dual fullerenes is of high theoretical interest, acting as a subset of \textit{convex tilings of the sphere}, as discussed in \cite{engel2018number}.

\begin{figure}[H]
\begin{center}
\includegraphics[scale=0.4]{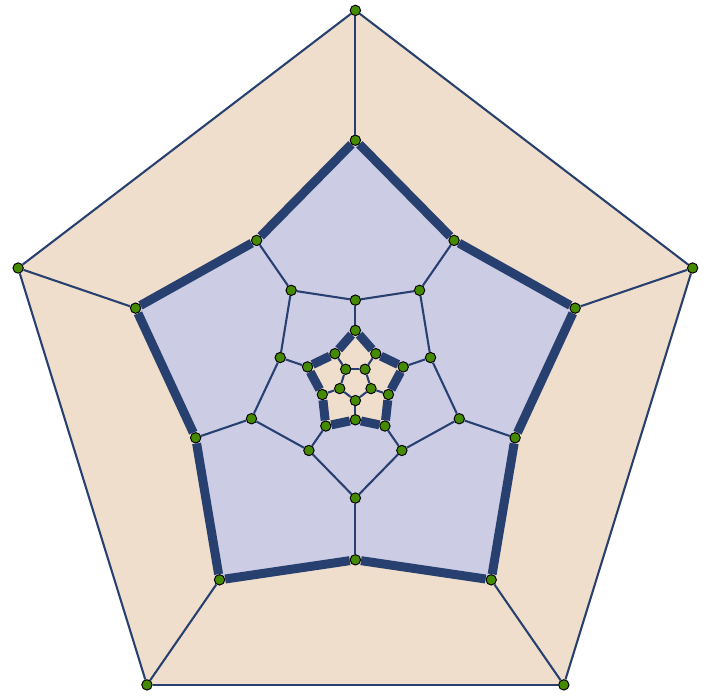}
\caption{Finite $(5,0)$--nanotube with $n=40$ vertices with bold lines denoting edges separating caps (beige facets) from the tubular part (violet facets). 
Note that the outer (uncolored) face represents the twelfth pentagon.}\label{fig:nanotube_decomposition}
\end{center}
\end{figure}
Before defining dual infinite $(p,q)$--nanotubes formally, let us briefly discuss an intuitive approach for constructing them starting with the hexagonal lattice.

Choose two hexagons in the hexagonal lattice and identify them, akin to rolling up a (infinitely large) piece of paper and gluing together two of its points.
A vector connecting the corresponding vertices of the two hexagons, illustrated in Figure \ref{fig:chiralvector}, is called the \textit{chiral vector}, and it entirely determines the structure of the resulting infinite \textit{$(p,q)$--nanotube}.
This chiral vector can be expressed as $p\cdot e_1 + q\cdot e_2$, where $p,q\in\Z$, and $e_1, e_2 \in\R^2$ are any two given linearly independent vectors in $\R^2$.
For simplicity and without loss of generality, we choose both vectors such that all edges in the resulting lattice have unit length, as depicted in Figure \ref{fig:chiralvector}.
Fixing the basis vectors $e_1$ and $e_2$, the vector of integers $(p,q)$ determines the chiral vector entirely, such that we refer to the \textit{chiral vector} as just the tuple $(p,q)$.

This construction aligns with the definition of finite $(p,q)$--nanotubes with $n$ vertices as $n\to \infty$ given before, wherein the caps remain unchanged, while the tubular segment extends infinitely. 
Consequently, both caps can be asymptotically neglected when calculating the average number of closed paths.
The tubular part of the nanotube is composed of infinitely many copies of \textit{hexagonal belts}, whose structure is determined by the aforementioned chiral vector. 
Figure \ref{fig:chiralvector} illustrates the connection between the chiral vector and the corresponding hexagonal belt.

\begin{figure}[H]
\begin{center}
\includegraphics[scale=1.3]{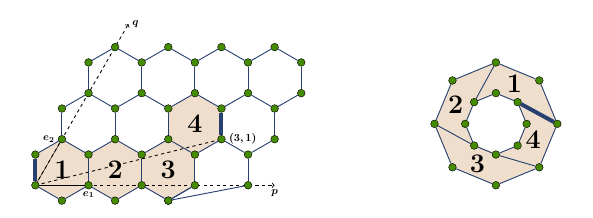}
\caption{Left: Part of a hexagonal lattice with two axes generated by the vectors $e_1 = \left(\sqrt{3}, 0\right)$ and $e_2 = \left(\sfrac{\sqrt{3}}{2}, \sfrac{3}{2}\right)$, along with the chiral vector $(3,1)$. Right: The shaded, numbered hexagons are cut out and glued together at the thicker edge, forming a belt.}\label{fig:chiralvector}
\end{center}
\end{figure} 

Due to the symmetry of the hexagonal facets, and consequently the entire hexagonal lattice, it is sufficient to consider only non--negative values for $p$ and $q$. 
Moreover, the chiral vector is symmetric, meaning a $(p,q)$--nanotube is identical to a $(q,p)$--nanotube. 
Both properties follow directly from \cite[Corollary 3.1]{DiCre}.
We further restrict our discussion to $(p,q)$--nanotubes with a circumference of at least 3, since no (feasible) belt with fewer than three hexagons can be constructed.
However, from a purely theoretical point of view, $p+q$ can be considered as any value starting from 1. 
The chiral vectors $(1,0)$ or $(2,0)$, $(1,1)$ also result in infinite nanotubes, but to ensure that the corresponding graph represents a three-dimensional polyhedron, i.e., that any two hexagons share no more than one common edge -- a necessary condition for  three-dimensional polyhedra according to Steinitz's theorem -- the circumference $p+q$ must be at least 3. 
This can be directly verified, as can the fact that any $(p,q)$ with $p+q\geq 3$ is suitable, cf. \cite{BuchEro17}. 
If we require that the infinite $(p,q)$--nanotube can be cut and considered as the tubular part of a finite $(p,q)$--nanotube, then $p+q$ must be at least 5. In the specific case of $p+q=5$, only $(5,0)$--nanotubes are possible. 
This follows from the fact that fullerenes do not have 3- and 4-belts of hexagons, and nontrivial 5-belts are found only on $(5,0)$--nanotubes. 
For a more detailed explanation the reader is referred to \cite{BuchEro17}.
In the following, we consider chiral vectors $(p,q)\in \mathbf{N}:=\lbrace (p,q)\in\N_0^2~|~ p+q \geq 5,~ q\leq p \rbrace $. Now we can formulate
\begin{definition}\label{def:H_T_N_N*}
Let $(p,q)\in \mathbf{N}$ be given. 
We call the graph
\begin{enumerate}[(i)]
\item $H$ a \textit{hexagonal lattice} if its set of vertices is given by
\begin{align*}
V(H):=\left\lbrace \left(\sqrt{3} x+\frac{y\sqrt{3}}{2},\frac{3y}{2} +c\right)^{\intercal}\in\R^2 ~\middle|~ x,y\in\Z,  ~c\in\lbrace 0,1 \rbrace \right \rbrace,
\end{align*}
and every vertex is connected with its three nearest neighbours (w.r.t. the Euclidean distance) by an edge.

\item $T^*$ a \textit{triangular lattice}  if its set of vertices is given by
\begin{align*}
V\left(T^*\right):=\left\lbrace v\in V(H)~\middle|~ c=0 \right \rbrace
\end{align*}

and every vertex is connected with its six nearest neighbours (w.r.t. the Euclidean distance) by an edge. Additionally, we require that every vertex of $T^*$ has three loops, or equivalently one loop of weight 3.

\item $N_{(p,q)}^*$ a \textit{dual infinite $(p,q)$--nanotube} if $N_{(p,q)}^*$ has the same sets of vertices and edges as $T^*$ with the condition that every vertex $v\in V(N_{(p,q)}^*)$ is identified with $v+j\sqrt{3}(p,q)^{\intercal}$ for every $j\in\Z$. 
According to its chiral vector $(p,q)$, every dual infinite $(p,q)$--nanotube can be assigned to exactly one of the three groups:
\begin{enumerate}[(i)]
\item If $q=0$, then the nanotube is called a \textit{zigzag nanotube}.
\item If $p=q$, then the nanotube is called an \textit{armchair nanotube}.
\item Otherwise, i.e., if $0<q<p$, the nanotube is called a \textit{chiral nanotube.}
\end{enumerate}
\end{enumerate}
\end{definition}

%---------------------------------------------------------------------------------------------------------

\section{Spectrum of dual $\boldsymbol{(p,q)}$-nanotubes}  \label{sect:RVLambda}

In this section, we first derive a representation of random eigenvalues $\Lambda_{p,q}^*$ for general dual $(p,q)$--nanotubes. 
We also present two more practical formulas for these random eigenvalues in the specific cases of zigzag ($q=0$) and armchair ($p=q$) nanotubes.

Finally, we analyze the asymptotic behavior of $\Lambda_{p,q}^*$ as $p+q\rightarrow\infty$, demonstrating that the random eigenvalues of nanotubes converge to the random eigenvalues of the triangular lattice.  

%-----------------------------------------------------------------------------------------------------------

\subsection{Distribution of random eigenvalues $\boldsymbol{\Lambda_{p,q}^}*$} \label{sect:RVLambda_subsect:RV}
\noindent 
For $(p,q)\in\mathbf{N}$ and $k\in\N_0$, the number of closed paths with $k$ steps on a dual infinite $(p,q)$--nanotube given in \cite{Cotfas00} is
\begin{align}\label{eq:fCotfas}
\mu_k\left( \Lambda_{p,q}^* \right) 
&= 
\frac{1}{(2\pi)^3} \sum_{j\in\Z} \int_{-\pi}^\pi\int_{-\pi}^\pi\int_{-\pi}^\pi \left| e^{ix} + e^{iy} + e^{iz} \right|^{2k} e^{ij\left(p\left( x-z\right) +q\left(y-z\right)\right)} \diff x \diff y \diff z .
\end{align} 
 
\begin{theorem}\label{thm:RV_pq}
For all $(p,q)\in\mathbf{N}$, it holds
\begin{align} 
\Lambda^*_{p,q} \eqd
3+2\left( 
\cos \mathcal{U} + 
\cos \left( \frac{p \, \mathcal{U} + 2\pi \mathcal{J}_{p,q}}{p+q} \right) + 
\cos \left(\frac{q \, \mathcal{U} - 2\pi \mathcal{J}_{p,q}}{p+q} \right) \right), \label{theorem1:1}
\end{align}
where $\mathcal{U}\sim U(0,\pi)$ and
$\mathcal{J}_{p,q}\sim U\lbrace 0,\ldots, p+q-1 \rbrace$ are independent uniformly distributed random variables.
\end{theorem}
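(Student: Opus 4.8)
The plan is to start from the integral representation \eqref{eq:fCotfas} and show that the right-hand side of \eqref{theorem1:1} produces exactly the same moment sequence; since all the random eigenvalues have compact support, equality of moments forces equality in distribution. First I would carry out the sum over $j\in\Z$ inside \eqref{eq:fCotfas}. The factor $e^{ij(p(x-z)+q(y-z))}$ summed over $j$ yields (in the sense of distributions) a Dirac comb $2\pi\sum_{m\in\Z}\delta\!\left(p(x-z)+q(y-z)-2\pi m\right)$, which constrains the three angles to lie on the union of lines $p(x-z)+q(y-z)\in 2\pi\Z$ inside the cube $[-\pi,\pi]^3$. Integrating out one variable against this constraint collapses the triple integral into a double integral over a parametrisation of that variety, and the whole point is to choose the parametrisation so that it is visibly the law of $(\mathcal U,\mathcal J_{p,q})$.

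The key change of variables is the natural one suggested by the statement: write $x = u$, and on the sheet indexed by the integer $\ell$ (which will become $\mathcal J_{p,q}$ ranging over a set of size $p+q$) set
\begin{align*}
y - z = \frac{(p+q)\,\alpha - p\,u}{q}\quad\text{or more symmetrically}\quad
x - z = \frac{q\,u + 2\pi\ell}{p+q},\ \
y - z = \frac{p\,u - 2\pi\ell \cdot(\text{sign})}{p+q},
\end{align*}
arranged so that $p(x-z)+q(y-z)$ is an integer multiple of $2\pi$ automatically. Under this substitution $e^{ix}+e^{iy}+e^{iz} = e^{iz}\bigl(1 + e^{i(x-z)} + e^{i(y-z)}\bigr)$, so $|e^{ix}+e^{iy}+e^{iz}|^2 = |1 + e^{i(x-z)} + e^{i(y-z)}|^2$, and expanding this modulus squared via $|1+a+b|^2 = 3 + 2\mathrm{Re}(a) + 2\mathrm{Re}(b) + 2\mathrm{Re}(a\bar b)$ gives precisely $3 + 2\cos(x-z) + 2\cos(y-z) + 2\cos(x-y)$. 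Matching $x - z$, $y - z$ and $(x-y) = (x-z)-(y-z)$ against the three cosine arguments $\mathcal U$, $(p\mathcal U + 2\pi\mathcal J_{p,q})/(p+q)$, $(q\mathcal U - 2\pi\mathcal J_{p,q})/(p+q)$ in \eqref{theorem1:1} is the bookkeeping heart of the argument — one has to check the three arguments are consistent (their relations are forced, e.g. the first minus the second equals the third up to sign and relabelling) and that $u$ can be taken in $(0,\pi)$ rather than $(-\pi,\pi)$ by the evenness of the integrand in the reflection $u\mapsto -u$ together with $\ell \mapsto -\ell$.

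Finally I would track the Jacobian and the normalisation: the original $1/(2\pi)^3$ together with the $2\pi$ from the Dirac comb and the Jacobian of the linear substitution should combine into $\tfrac{1}{\pi}\,\mathrm{d}u$ on $(0,\pi)$ times the uniform weight $\tfrac{1}{p+q}$ on $\ell\in\{0,\dots,p+q-1\}$, i.e. exactly $\mathbb{E}$ of $(\Lambda^*_{p,q})^k$ with $\Lambda^*_{p,q}$ as claimed, for every $k\in\N_0$. Then invoking the compact-support / method-of-moments remark from the introduction (vertex degrees bounded, Gershgorin, Weierstrass) upgrades equality of all moments to equality in distribution, completing the proof. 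The main obstacle I anticipate is purely organisational rather than deep: handling the distributional sum over $j$ rigorously (one may instead argue directly at the level of the finite nanotube, where the sum over $j$ is genuinely finite and the integral over $z$ becomes a finite sum over roots of unity, then pass to the limit), and making sure the index set of the discrete parameter is honestly $\{0,\dots,p+q-1\}$ with no double counting of sheets — this requires checking that distinct $\ell$ in that range give distinct branches of the variety intersected with the fundamental domain, which is where the coprimality-free, clean count $p+q$ comes from.
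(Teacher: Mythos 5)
Your proposal follows essentially the same route as the paper's proof: Poisson summation turns the sum over $j$ into a Dirac comb, the $z$-integral is collapsed against the delta constraint (producing the $\tfrac{1}{p+q}$ weight and the finite index set $\{0,\dots,p+q-1\}$), the modulus is reduced to $\left|1+e^{i\theta}+e^{i(q\theta-2\pi j)/(p+q)}\right|^{2}$ via the substitution $\theta=y-x$, and the restriction of $\mathcal{U}$ to $(0,\pi)$ is justified by the evenness/symmetry in $(\mathcal{U},\mathcal{J}_{p,q})$, with compactness of support upgrading moment equality to equality in distribution. The plan is sound and matches the paper in all essential steps.
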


\begin{proof}
Denote by $\delta_0$ the Dirac delta function. 
Applying the \textit{Poisson summation formula} \cite[p. 82]{Vlad71}
\begin{align*}
\sum_{j\in\Z} e^{ i j \alpha} = 2\pi \sum_{j\in\Z} \delta_0(\alpha - 2\pi j),\quad \alpha\in\C,
\end{align*}
with $\alpha={p(x-z) + q(y-z)}$ to relation \eqref{eq:fCotfas} we get
\begin{align}
\mu_k\left( \Lambda_{p,q}^* \right) 
&= 
\frac{1}{(2\pi)^2} 
\sum_{j\in\Z}
\int_{-\pi}^\pi  \int_{-\pi}^\pi  \int_{-\pi}^\pi  
	\left| e^{ix} + e^{iy} + e^{iz} \right|^{2k} 
	\delta_0\left(p\left( x-z\right) +q\left(y-z\right) - 2\pi j\right) 
\diff z \diff x \diff y.
\label{formula: mu_k delta_0}
\end{align}
For the inner integral we get by substitution and using the property 
$\int_{-\infty}^{\infty}
f(u) \delta_0(u)
\diff u = f(0)$
that
\begin{align*}
&\int_{-\pi}^\pi  
	\left| e^{ix} + e^{iy} + e^{iz} \right|^{2k} 
	\delta_0\left(p\left( x-z\right) +q\left(y-z\right) - 2\pi j\right) 
\diff z
%=
%&\frac{1}{p+q}\int_{p(x-\pi) + q(y-\pi) - 2\pi j}^{p(x+\pi) + q(y+\pi) - 2\pi j} 
%\left|e^{ix} + e^{iy} + e^{i\frac{px + qy -2\pi j -u}{p+q}} \right|^{2k} \delta_0(u) 
%\diff u\\
=
\frac{1}{p+q}
\left| e^{ix} + e^{iy} + e^{i\frac{px +qy - 2\pi j}{p+q}} \right|^{2k}.
\end{align*}
Note that as $z\in [-\pi,\pi)$ and $j\in\Z$, it must hold
\begin{align*}
\frac{px + qy-2\pi j}{p+q} \in [-\pi,\pi) 
\quad &\Leftrightarrow \quad
%j \in \left[\frac{p(x-\pi)+q(y-\pi)}{2\pi}, ~ \frac{p(x+\pi)+q(y+\pi)}{2\pi} \right) \cap \Z \\
j - \frac{p(x-\pi) + q(y-\pi)}{2\pi} \in (0,p+q] \cap \Z.
\end{align*}
Thus, the sum in \eqref{formula: mu_k delta_0} is finite, and we get
\begin{align*}
\mu_k\left( \Lambda_{p,q}^* \right) 
&= 
\frac{1}{(p+q)(2\pi)^2}  
\int_{-\pi}^\pi \int_{-\pi}^\pi 
\sum_{j = 1}^{p+q}
	\left| e^{ix} + e^{iy} + e^{i\frac{px+qy-2\pi j}{p+q}} \right|^{2k}
	\diff x \diff y\\
&= 
\frac{1}{(p+q)(2\pi)^2} 
\sum_{j=1}^{p+q}  
\int_{-\pi}^\pi \int_{-\pi}^\pi  
	\underbrace{\left| e^{ix} \right|^{2k}}_{=1} \cdot 
	\left| 1 + e^{i(y-x)} + e^{i \frac{q(y-x) - 2\pi j}{p+q} } \right|^{2k} 
	\diff x \diff y\\
&\overset{\theta:=y-x}{=} 
\frac{1}{(p+q) 2\pi} 
\sum_{j=1}^{p+q} 
\int_{-\pi}^\pi  
	\left|1+e^{i\theta}+e^{i\frac{q\theta - 2\pi j}{p+q}} \right|^{2k} 
	\diff \theta
= 
\E \mathcal{X}_{p,q}^k,
\end{align*}
where $\mathcal{X}_{p,q} := \left|1+e^{iU}+e^{i \frac{qU - 2\pi J}{p+q}} \right|^{2}$ is a random variable constructed from two independent uniformly distributed random variables 
$\mathcal{U}\sim U(-\pi,\pi)$ and
$\mathcal{J}_{p,q}\sim U\lbrace 1,\ldots, p+q \rbrace$.
Then, due to Euler's formula and some basic trigonometric transformations, $\mathcal{X}_{p,q}$ can be expressed as
\begin{align}
\mathcal{X}_{p,q}
&=
3 + 2\left( 
\cos \mathcal{U} + 
\cos \left( \frac{p \,\mathcal{U} + 2\pi \mathcal{J}_{p,q}}{p+q} \right) + 
\cos \left(\frac{q \, \mathcal{U} - 2\pi \mathcal{J}_{p,q}}{p+q}\right) 
\right).  \label{formula:3cos}
\end{align}
For the sake of readability, and since $\cos(\cdot)$ is $2\pi$--periodic, we can shift the support of $\mathcal{J}_{p,q}$ to $\lbrace0,\ldots, p+q-1\rbrace$.

We can restrict the support of $\mathcal{U}$ to $(0,\pi)$ since the distribution of \eqref{formula:3cos} is even with respect to $\mathcal{U}$, i.e., it holds
\begin{align}
\mathcal{X}_{p,q}
\eqd
3+ 2 \left(
\cos \left|\mathcal{U}\right| + 
\cos \left( \frac{p \left|\mathcal{U}\right| + 2\pi \mathcal{J}_{p,q}}{p+q} \right) + 
\cos \left(\frac{q \left|\mathcal{U}\right| - 2\pi \mathcal{J}_{p,q}}{p+q}\right)
\right),
\label{formula_3cos_2}
\end{align}
with $\mathcal{U}\sim U(-\pi,\pi)$.
To see this, recall that $\cos x$ is even in $x$, and that any linear transformation of an even function remains even. 
Hence, we can disregard the linear transformation and the first cosine on the right hand-side of \eqref{formula_3cos_2}.
It remains to prove the symmetry of the distribution with respect to $\mathcal{U}$ for the function
\begin{align*}
h(\mathcal{U},\mathcal{J}_{p,q},p,q) :=
\cos \left( \frac{p\,\mathcal{U}  +2\pi \mathcal{J}_{p,q}}{p+q} \right) + \cos \left(\frac{q\,\mathcal{U} - 2\pi \mathcal{J}_{p,q}}{p+q}\right),
\end{align*}
using the law of total probability over all possible values of $\mathcal{J}_{p,q}$.
It is straightforward to verify that $h(\mathcal{U},0,p,q)$ is even for all $(p,q)\in\mathbf{N}$, as well as $h\left(\mathcal{U},\frac{p+q}{2},p,q\right)$ when $p+q$ is even.
For all other values of $\mathcal{J}_{p,q}$, we get that
\newpage
\begin{align*}
&h\left(\mathcal{U}, \mathcal{J}_{p,q},p,q\right)
+ 
h\left(\mathcal{U}, p + q - \mathcal{J}_{p,q},p,q\right)\\
=~
&\cos\left(\frac{p\,\mathcal{U} + 2\pi \mathcal{J}_{p,q}}{p+q}\right) + 
\cos\left( \frac{q\, \mathcal{U} - 2\pi \mathcal{J}_{p,q}}{p+q}\right) + 
\cos\left(\frac{p\, \mathcal{U} - 2\pi \mathcal{J}_{p,q}}{p+q}\right) + 
\cos\left( \frac{q\, \mathcal{U} + 2\pi \mathcal{J}_{p,q}}{p+q}\right)
\end{align*}
is also even. 
This, finally, proves the claim.
\end{proof}

%%%%%%%%%%%%%%%%%%%%%%%%%%%%%%%%%%%%%%%%%%%%%%%%%%%%%%%%%%%%%

\begin{remark}
\begin{enumerate}[(i)]
\item For chiral nanotubes $(0<q<p)$, we can rewrite the distribution of $\Lambda_{p,q}^*$ in terms of \textit{Chebyshev polynomials of the first kind}, which are defined as the unique polynomials $T_n$ of degree $n\in\N_0$ satisfying 
\begin{align*}
T_n(\cos\theta ) = \cos \left( n\theta\right),\quad \theta\in\R.
\end{align*}
The multiple-angle formula for cosine yields
\begin{align*}
\Lambda_{p,q}^* 
\overset{d}{=} 
1 + 8T_p(\mathcal{U}) T_q(\mathcal{U}) T_{p+q}(\mathcal{U}).
\end{align*}

\item For the two important cases of zigzag and armchair nanotubes, one way to derive important properties of their random eigenvalues is to express the righthand-side of \eqref{theorem1:1} in terms of $\cos\left(\sfrac{\mathcal{U}}{2}\right)$. 
It holds
\begin{align*}
&\cos \mathcal{U} + \cos \left( \frac{p\, \mathcal{U} + 2\pi \mathcal{J}_{p,q}}{p+q} \right) + \cos \left(\frac{q\, \mathcal{U} - 2\pi \mathcal{J}_{p,q}}{p+q} \right)
 =\\ 
&2	\cos^2\left(\frac{\mathcal{U}}{2}\right) + 
2	\cos\left(\frac{\mathcal{U}}{2}\right) 
 	\cos\left(\frac{(p-q)\,\mathcal{U}+4\pi \mathcal{J}_{p,q}}{2(p+q)}\right) - 1,
\end{align*} 
and 
\begin{align*} 
\cos\left(\frac{(p-q)\,\mathcal{U} + 4\pi \mathcal{J}_{p,q}}{2(p+q)}\right)
&= 
\begin{cases}
\sqrt{1-\cos^2\left(\frac{\mathcal{U}}{2}\right)} \sin\left(\frac{2\pi \mathcal{J}_{p,0}}{p}\right),\quad &\text{ if } q=0,\\
\cos\left(\frac{\pi \mathcal{J}_{p,p}}{p}\right),\quad &\text{ if } p=q.
\end{cases}
\end{align*}
Furthermore, we get easily the following density transformation:
\begin{align}
f_{\cos \frac{\mathcal{U}}{2}}(x)
= 
\frac{2}{\pi\sqrt{1-x^2}} \mathbbm{1}\lbrace 0 \leq x < 1 \rbrace, \label{density_U2}
\end{align}
which can be thought of as the (normalized) right half of the arcsine distribution on the interval $(-1,1)$.
\end{enumerate}
%%%%%%%%%%%%%%%%%%%%%%%   Additional notes   %%%%%%%%%%%%%%%%%%%%%%%%%%%%%%%%%%%%%%

%Further, using the multiple--angle formula for cosine one gets
%\begin{align*}
%&1 + 8 \cos\left( (p+q) U \right)\cos\left(pU\right)\cos\left(qU\right)\\
%=
%&1 + 8 \sum_{k_1=0}^{\lfloor \frac{p+q}{2} \rfloor} \sum_{k_2=0}^{\lfloor \frac{p}{2} \rfloor} \sum_{k_3=0}^{\lfloor \frac{q}{2} \rfloor}
%(-1)^{k_1+k_2+k_3} \binom{p+q}{2k_1} \binom{p}{2k_2} \binom{q}{2k_3}
%\sin^{2(k_1+k_2+k_3)}(U) \cos^{2(p+q-k_1-k_2-k_3)}(U)\\
%=
%&1 + 8 \cos^{2(p+q)}(U)\sum_{k_1=0}^{\lfloor \frac{p+q}{2} \rfloor} \sum_{k_2=0}^{\lfloor \frac{p}{2} \rfloor} \sum_{k_3=0}^{\lfloor \frac{q}{2} \rfloor}
%(-1)^{k_1+k_2+k_3} \binom{p+q}{2k_1} \binom{p}{2k_2} \binom{q}{2k_3}
%\left(-\tan^2(U)\right)^{2(k_1+k_2+k_3)}
%\end{align*}
%%%%%%%%%%%%%%%%%%%%%%%%%%%%%%%%%%%%%%%%%%%%%%%%%%%%%%%%%%%%%
%Moreover, for chiral nanotubes $(0<q<p)$, it holds
%\begin{align*}
%\Lambda_{p,q}^* 
%&\overset{d}{=} 
%3+2\left( \cos U_{p,q} + \cos \left( \frac{pU_{p,q}}{p+q}\right) + \cos \left(\frac{qU_{p,q}}{p+q}\right) \right)\\
%&\overset{d}{=} 
%1 + 8 \cos\left(pU\right)\cos\left(qU\right)\cos\left( (p+q) U \right)
%\end{align*}
%where $U\sim U(-\pi,\pi)$, $U_{p,q}\sim(-(p+q)\pi,(p+q)\pi)$.

\end{remark}

The latter remark leads directly to
\begin{corollary}\label{corollary:distribution_Lambda}
It holds
\begin{align*}
\Lambda_{p,0}^* 
&\overset{d}{=} 
4\left(1+\cos\left(\frac{2\pi \mathcal{J}_{p,0}}{p} \right)\right) \mathcal{V}^2 - 4\sin\left(\frac{2\pi \mathcal{J}_{p,0}}{p,0}\right) \mathcal{V}\sqrt{1-\mathcal{V}^2} + 1 ,\\
\Lambda_{p,p}^* 
&\overset{d}{=}  
4\mathcal{V}^2 + 4\cos\left(\frac{\pi \mathcal{J}_{p,p}}{p}\right) \mathcal{V} + 1,
\end{align*}
with independent random variables $\mathcal{V}$, $\mathcal{J}_{p,0}\sim U\lbrace 0,\ldots,p-1\rbrace$, $\mathcal{J}_{p,p}\sim U\lbrace 0, \ldots, 2p-1\rbrace$, where the PDF of $\mathcal{V}$ is given in \eqref{density_U2}.
\end{corollary}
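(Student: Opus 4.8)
The plan is to derive both identities directly from Theorem~\ref{thm:RV_pq} by specializing $q=0$ and $q=p$ in the intermediate expression given in part (ii) of the preceding remark. Recall that Theorem~\ref{thm:RV_pq} gives
\[
\Lambda^*_{p,q}\eqd 3+2\bigl(\cos\mathcal U+\cos\tfrac{p\mathcal U+2\pi\mathcal J_{p,q}}{p+q}+\cos\tfrac{q\mathcal U-2\pi\mathcal J_{p,q}}{p+q}\bigr),
\]
and the remark rewrites the bracketed sum of three cosines as
\[
2\cos^2\!\tfrac{\mathcal U}{2}+2\cos\!\tfrac{\mathcal U}{2}\cos\!\tfrac{(p-q)\mathcal U+4\pi\mathcal J_{p,q}}{2(p+q)}-1 .
\]
Substituting this into Theorem~\ref{thm:RV_pq} yields $\Lambda^*_{p,q}\eqd 1+4\cos^2\!\tfrac{\mathcal U}{2}+4\cos\!\tfrac{\mathcal U}{2}\cos\!\tfrac{(p-q)\mathcal U+4\pi\mathcal J_{p,q}}{2(p+q)}$, which already has the shape of the claimed formulas once I set $\mathcal V:=\cos\tfrac{\mathcal U}{2}$ (whose density is \eqref{density_U2}) and plug in the two case evaluations of the last cosine factor supplied by the remark.

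The first step is therefore purely bookkeeping: introduce $\mathcal V=\cos(\mathcal U/2)$, note $\mathcal V^2=\cos^2(\mathcal U/2)$, so that $\Lambda^*_{p,q}\eqd 1+4\mathcal V^2+4\mathcal V\cos\!\bigl(\tfrac{(p-q)\mathcal U+4\pi\mathcal J_{p,q}}{2(p+q)}\bigr)$. For the armchair case $p=q$, the remark gives $\cos\!\bigl(\tfrac{(p-q)\mathcal U+4\pi\mathcal J_{p,p}}{2(p+q)}\bigr)=\cos\!\bigl(\tfrac{\pi\mathcal J_{p,p}}{p}\bigr)$, and substituting produces exactly $4\mathcal V^2+4\cos(\pi\mathcal J_{p,p}/p)\mathcal V+1$ with $\mathcal J_{p,p}\sim U\{0,\dots,2p-1\}$ (the support having $p+q=2p$ elements, as in Theorem~\ref{thm:RV_pq}). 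For the zigzag case $q=0$, the remark gives $\cos\!\bigl(\tfrac{p\mathcal U+4\pi\mathcal J_{p,0}}{2p}\bigr)=\sqrt{1-\cos^2(\mathcal U/2)}\,\sin\!\bigl(\tfrac{2\pi\mathcal J_{p,0}}{p}\bigr)=\sqrt{1-\mathcal V^2}\,\sin\!\bigl(\tfrac{2\pi\mathcal J_{p,0}}{p}\bigr)$. Here I must expand the last cosine factor more carefully: using $\cos\!\bigl(\tfrac{p\mathcal U+4\pi\mathcal J_{p,0}}{2p}\bigr)=\cos\!\bigl(\tfrac{\mathcal U}{2}\bigr)\cos\!\bigl(\tfrac{2\pi\mathcal J_{p,0}}{p}\bigr)-\sin\!\bigl(\tfrac{\mathcal U}{2}\bigr)\sin\!\bigl(\tfrac{2\pi\mathcal J_{p,0}}{p}\bigr)$. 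Multiplying by $4\mathcal V=4\cos(\mathcal U/2)$ gives $4\cos^2(\mathcal U/2)\cos(2\pi\mathcal J_{p,0}/p)-4\cos(\mathcal U/2)\sin(\mathcal U/2)\sin(2\pi\mathcal J_{p,0}/p)=4\mathcal V^2\cos(2\pi\mathcal J_{p,0}/p)-4\mathcal V\sqrt{1-\mathcal V^2}\,\sin(2\pi\mathcal J_{p,0}/p)$, and adding $1+4\mathcal V^2$ collects the $\mathcal V^2$ terms into $4(1+\cos(2\pi\mathcal J_{p,0}/p))\mathcal V^2$, yielding the stated formula.

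The only genuinely delicate point — and the main obstacle — is justifying the replacement of the random variable $\cos\!\bigl(\tfrac{(p-q)\mathcal U+4\pi\mathcal J_{p,q}}{2(p+q)}\bigr)$ by the case-specific closed forms in a way that is valid as an equality in distribution of the whole expression, not merely pointwise. One must check that the substitution $\mathcal J_{p,0}\mapsto\mathcal J_{p,0}$ in the zigzag formula (respectively $\mathcal J_{p,p}\mapsto\mathcal J_{p,p}$ in the armchair formula) is a bijection of the relevant index set modulo the periodicity of the trigonometric functions involved, and that the sign issues arising from $\sqrt{1-\cos^2(\mathcal U/2)}=|\sin(\mathcal U/2)|=\sin(\mathcal U/2)$ on $(0,\pi)$ are handled by the restriction of $\mathcal U$ to $(0,\pi)$ already established in the proof of Theorem~\ref{thm:RV_pq}; on that interval $\sin(\mathcal U/2)\ge 0$, so the square root is legitimate. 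I would also remark that the identity $\cos\!\bigl(\tfrac{p\mathcal U+4\pi\mathcal J_{p,0}}{2p}\bigr)=\sqrt{1-\mathcal V^2}\,\sin(2\pi\mathcal J_{p,0}/p)$ claimed in the remark is itself an equality in distribution (absorbing the $\cos(\mathcal U/2)\cos(2\pi\mathcal J_{p,0}/p)$ piece into a relabeling of $\mathcal J_{p,0}$), so strictly speaking the zigzag computation should be carried out from the two-term expansion above rather than from the remark's compressed form; doing so makes the derivation self-contained and avoids any circularity. Once these distributional bookkeeping points are in place, the corollary follows, and the density of $\mathcal V$ is exactly \eqref{density_U2} by the change of variables already recorded in the remark.
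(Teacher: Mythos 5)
Your proposal is correct and follows essentially the same route as the paper, which offers no separate proof but derives the corollary directly from the identities in Remark~1(ii) via the substitution $\mathcal V=\cos(\mathcal U/2)$. You are right that the zigzag case must be carried out from the full angle-addition expansion $\cos\bigl(\tfrac{\mathcal U}{2}+\tfrac{2\pi\mathcal J_{p,0}}{p}\bigr)=\mathcal V\cos\bigl(\tfrac{2\pi\mathcal J_{p,0}}{p}\bigr)-\sqrt{1-\mathcal V^2}\,\sin\bigl(\tfrac{2\pi\mathcal J_{p,0}}{p}\bigr)$ rather than the remark's compressed formula, since it is exactly the retained $\mathcal V\cos(\cdot)$ term that produces the coefficient $4\bigl(1+\cos\bigl(\tfrac{2\pi\mathcal J_{p,0}}{p}\bigr)\bigr)$ in the stated density-free representation.
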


\color{black}
%%%%%%%%%%%%%%%%%%%%%%%%%%%%%%%%%%%%%%%%%%%%%%%%%%%%%%%%%%%%%
Additionally, Theorem \ref{thm:RV_pq} provides a theoretical explanation for the intuitive idea that a dual infinite $(p,q)$--nanotube with increasing circumference converges locally to the triangular lattice, in the sense that  $\Lambda_{p,q}^*$ converges weakly to the random variable $\mathcal{T}$ (denoting the random eigenvalue of $T^*$) whose PDF is presented in \cite[Proposition 4]{bille2023random} as
\begin{align*}
f_{\mathcal{T}}(x) 
=
\frac{1}{2}\int_0^\infty t J_0\left( t\sqrt{x}\right) J_0^3(t) \diff t,\quad x>0,
\end{align*}
where $J_n$ denotes the \textit{Bessel function of the first kind of order $n\in\Z$} defined as 
\begin{align*}
J_n(x)
:=
\sum_{k=0}^\infty \frac{(-1)^k}{k!(k+n)!} \left(\frac{x}{2} \right)^{2k+n}, \quad x\in\C.
\end{align*}

\begin{proposition}\label{proposition:weak_convergence_Lambdapq}
As $p+q\rightarrow \infty$, such that $\frac{p}{p+q}\to c \in[0,1]$, the following weak convergence holds for the random eigenvalue of a dual infinite $(p,q)$--nanotube:
\begin{align*}
\Lambda^*_{p,q} \xrightarrow[p+q\rightarrow\infty]{d} \mathcal{T}.
\end{align*}
\end{proposition}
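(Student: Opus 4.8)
The plan is to deduce the weak convergence directly from the representation \eqref{theorem1:1} via the continuous mapping theorem, exploiting the algebraic fact that the three cosine arguments in \eqref{theorem1:1} add up to $\mathcal{U}$. Setting $\Phi_{p,q} := \frac{p\,\mathcal{U} + 2\pi\mathcal{J}_{p,q}}{p+q}$ one has $\frac{q\,\mathcal{U} - 2\pi\mathcal{J}_{p,q}}{p+q} = \mathcal{U} - \Phi_{p,q}$, so that \eqref{theorem1:1} reads
\[
\Lambda^*_{p,q} \eqd g\bigl(\mathcal{U},\Phi_{p,q}\bigr), \qquad g(u,\varphi) := 3 + 2\bigl(\cos u + \cos\varphi + \cos(u-\varphi)\bigr) = \bigl|1 + e^{iu} + e^{i\varphi}\bigr|^{2}.
\]
The function $g$ is bounded, continuous on $\R^2$, and $2\pi$-periodic in each argument, so it suffices to determine the joint weak limit of the pair $\bigl(\mathcal{U},\,\Phi_{p,q}\bmod 2\pi\bigr)$ and then invoke the continuous mapping theorem; since all random eigenvalues involved are supported in the compact interval $[0,9]$, this weak convergence is in turn equivalent to convergence of all moments.

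Next I would establish the joint limit. Because $\mathcal{J}_{p,q}\sim U\{0,\dots,p+q-1\}$, the rescaled variable $\frac{2\pi\mathcal{J}_{p,q}}{p+q}$ converges weakly to $W\sim U(0,2\pi)$ as $p+q\to\infty$; as $\mathcal{U}$ has a law not depending on $(p,q)$ and is independent of $\mathcal{J}_{p,q}$, the pair converges jointly, $\bigl(\mathcal{U},\tfrac{2\pi\mathcal{J}_{p,q}}{p+q}\bigr)\tod(\mathcal{U},W)$ with $\mathcal{U}$ and $W$ independent. Combining this with the hypothesis $\frac{p}{p+q}\to c$ (so that $\frac{p}{p+q}\mathcal{U}\to c\,\mathcal{U}$) and applying the continuous mapping theorem once more gives $\bigl(\mathcal{U},\Phi_{p,q}\bigr)\tod\bigl(\mathcal{U},\,c\,\mathcal{U}+W\bigr)$. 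The crucial observation is that, $W$ being uniform over a full period and independent of $\mathcal{U}$, the variable $W' := (c\,\mathcal{U}+W)\bmod 2\pi$ is again $U(0,2\pi)$ and is independent of $\mathcal{U}$; in particular the limit does not depend on $c$. Hence $\bigl(\mathcal{U},\Phi_{p,q}\bmod 2\pi\bigr)\tod(\mathcal{U},W')$ with $\mathcal{U}\sim U(0,\pi)$ and $W'\sim U(0,2\pi)$ independent.

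By the continuous mapping theorem it follows that $\Lambda^*_{p,q}=g(\mathcal{U},\Phi_{p,q})\tod g(\mathcal{U},W')=\bigl|1+e^{i\mathcal{U}}+e^{iW'}\bigr|^{2}$, and it remains to identify this limit with $\mathcal{T}$. The quickest route is to recognise $\bigl|1+e^{i\mathcal{U}}+e^{iW'}\bigr|^{2}$ as the squared displacement of a three‑step unit random walk with uniform i.i.d.\ step directions (rotational invariance removes the irrelevant deterministic first step $1=e^{i0}$ and the restriction $\mathcal{U}\in(0,\pi)$, since $\bigl|1+e^{i\mathcal U}+e^{iW'}\bigr|^2$ is invariant under $\mathcal U\mapsto-\mathcal U,\,W'\mapsto-W'$), whose density is precisely the Hankel transform $\frac12\int_0^\infty tJ_0(t\sqrt{x})J_0^3(t)\,\diff t=f_{\mathcal T}(x)$. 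Alternatively one matches moments directly: for every $k\in\N_0$,
\[
\E\bigl|1+e^{i\mathcal{U}}+e^{iW'}\bigr|^{2k} = \frac{1}{\pi}\int_0^{\pi}\frac{1}{2\pi}\int_0^{2\pi}\bigl|1+e^{iu}+e^{iw}\bigr|^{2k}\,\diff w\,\diff u = \frac{1}{(2\pi)^2}\iint_{(-\pi,\pi)^2}\bigl|1+e^{iu}+e^{iw}\bigr|^{2k}\,\diff u\,\diff w = \mu_k(\mathcal{T}),
\]
where the middle equality uses that, after integrating out $w$, the integrand becomes even in $u$ (replace $w\mapsto-w$ and conjugate inside the modulus), and the last equality is the moment formula for the triangular‑lattice eigenvalue from \cite{bille2023random}. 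Since $\mathcal{T}$ and the limit are both supported in $[0,9]$, equality of all moments forces equality in distribution.

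The step I expect to be the main obstacle is the passage to the joint limit together with the uniform‑shift observation: one must verify that $\frac{2\pi\mathcal{J}_{p,q}}{p+q}\tod U(0,2\pi)$ is available in full strength (not merely along subsequences), that it genuinely combines with the fixed law of $\mathcal{U}$ into \emph{joint} weak convergence — this is where independence of $\mathcal{U}$ and $\mathcal{J}_{p,q}$ is essential — and that reducing modulo $2\pi$ really does wash out the dependence on $c$, so that the limit is the same $\mathcal{T}$ regardless of the chiral direction. An essentially equivalent alternative is to start from \eqref{eq:fCotfas}, perform the Poisson summation exactly as in the proof of Theorem \ref{thm:RV_pq}, and note that the resulting average over $j\in\{1,\dots,p+q\}$ is a Riemann sum that converges, after the same symmetrisation in $w$, to $\frac{1}{(2\pi)^2}\iint_{(-\pi,\pi)^2}|1+e^{iu}+e^{iw}|^{2k}\,\diff u\,\diff w=\mu_k(\mathcal{T})$; the final identification with $\mathcal{T}$ is then routine.
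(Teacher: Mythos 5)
Your proof is correct and follows the same overall skeleton as the paper's: start from the representation in Theorem \ref{thm:RV_pq}, pass to the limit as $p+q\to\infty$ with $\frac{p}{p+q}\to c$, and identify the limit with $\mathcal{T}$ by matching moments against $\sum_{k_1+k_2+k_3=k}\binom{k}{k_1,k_2,k_3}^2$, using compactness of the support to upgrade moment equality to equality in distribution. The genuine difference is where the dependence on $c$ is killed. The paper passes to the $c$--dependent limit $\mathcal{X}_c = 3+2\left(\cos\mathcal{U}+\cos\left((1-c)\mathcal{U}+\mathcal{V}\right)+\cos\left(c\,\mathcal{U}-\mathcal{V}\right)\right)$ and only afterwards shows that its moments are $c$--free by evaluating the double integral $\frac{1}{(2\pi)^2}\iint\left|1+e^{ix}+e^{i(cx-y)}\right|^{2k}\diff x\diff y$ (reusing the computation of Proposition \ref{prop:moments1} with $j=0$). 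You instead eliminate $c$ at the level of the random variables, via the observation that $(c\,\mathcal{U}+W)\bmod 2\pi$ is again uniform on the circle and independent of $\mathcal{U}$, so the limit is manifestly $\left|1+e^{i\mathcal{U}}+e^{iW'}\right|^2$ with independent uniforms for every $c$; this is a cleaner explanation of why the chiral direction is washed out, and it gives the Pearson three--step random-walk interpretation matching the Hankel-transform density of $\mathcal{T}$ for free. You are also more explicit than the paper about the limit passage itself (joint weak convergence of $\bigl(\mathcal{U},\,2\pi\mathcal{J}_{p,q}/(p+q)\bigr)$ plus the continuous mapping theorem applied to the periodic function $g$), where the paper simply writes that the convergence is ``directly'' inferred. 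Both routes end by invoking the same moment formula from \cite{bille2023random}; yours documents the probabilistic steps more carefully, while the paper's is shorter because it delegates the decisive computation to an already proved integral identity.
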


\begin{proof}
Let $\mathcal{U},\mathcal{V} \sim U(0,2\pi)$ be two independent random variables.
By Theorem \ref{thm:RV_pq}, we directly infer:
\begin{align*} 
\Lambda^*_{p,q} 
\xrightarrow[p+q\rightarrow\infty]{d} 
3 + 2\left( \cos \mathcal{U} + \cos \left( (1-c)\,\mathcal{U} + \mathcal{V} \right) + \cos \left( c\, \mathcal{U} - \mathcal{V}\right) \right)
=: \mathcal{X}_c,
\end{align*}
as $\frac{p}{p+q}\to c $.
It remains to prove that the distribution on the right-hand side does not depend on $c$ and coincides with the distribution of $\mathcal{T}$. 
Referring to \cite[Remark 1]{bille2023random}, it holds for every $c\in[0,1]$, that
\begin{align*}
\left| 1 + e^{i\, \mathcal{U}} + e^{i\left(c\, \mathcal{U} - \mathcal{V} \right)} \right|^2 
&= 
\left( 1+ \cos \mathcal{U} + \cos\left( c\, \mathcal{U} - \mathcal{V} \right) \right)^2 + \left( \sin \mathcal{U} + \sin \left( c\, \mathcal{U} - \mathcal{V} \right) \right)^2\\
&= 
1 + \underbrace{\cos^2 \mathcal{U} +\sin^2 \mathcal{U}}_{=1} + \underbrace{\cos^2\left(c\, \mathcal{U} - \mathcal{V} \right) + \sin^2\left(c \,\mathcal{U} - \mathcal{V} \right)}_{=1} \\
&+ 2\big( \cos \mathcal{U} + \cos\left( c\, \mathcal{U} - \mathcal{V}\right) +\underbrace{\cos \mathcal{U} \cos\left( c\, \mathcal{U} - \mathcal{V} \right) + 
\sin \mathcal{U} \sin\left( c\, \mathcal{U} - \mathcal{V}\right)}_{=\cos\left(\mathcal{U} - c\, \mathcal{U} + \mathcal{V}\right)}\big)
\\
&=
3 + 2\left(\cos\left( \mathcal{U}\right) + \cos\left( \left(1-c \right)\,\mathcal{U} + \mathcal{V} \right) + \cos\left( c\, \mathcal{U} - \mathcal{V} \right)\right)
=
\mathcal{X}_c.
\end{align*}
Next, employing the proof of Proposition \ref{prop:moments1} with $j=0$, one gets
\begin{align*}
\E \mathcal{X}_c^k 
= 
\frac{1}{\left(2\pi\right)^2}\int_{-\pi}^\pi \int_{-\pi}^\pi \left| 1 + e^{ix}+e^{i\left(c x -y\right)} \right|^{2k} \diff x \diff y 
=
\sum_{k_1+k_2+k_3=k}\binom{k}{k_1,k_2,k_3}^2=\E \mathcal{T}^k,
\end{align*}
where the last equality is presented in \cite[Proposition 3]{bille2023random}.
Since both $\mathcal{X}_c$ and $\mathcal{T}$ have compact support, the equality of moments implies the equality in distribution. 
\end{proof}

%--------------------------------------------------------------------------------------------------------------------------------------------------------------------------------------------------

\subsection{Moments  of $\boldsymbol{\Lambda^*_{p,q}}$}\label{sect:RVLambda_subsect:Moments}

Due to their representations as sums involving binomial and multinomial coefficients, moments of random eigenvalues of (dual) infinite $(p,q)$--nanotubes (as well as those of the hexagonal and triangular lattices) are special instances of more general mathematical and combinatorial problems. 
Notably, these problems include topics like \textit{counting Abelian squares} \cite{counting_abelian_squares}. 
In particular, a broader and more applicable introduction on hypergeometric identities can be found in \cite{petkovsek1996b}. 
In this book, Petkovsek et al. introduce and discuss the necessity for various problems of that kind, as well as provide an extended overview of the literature.

In this section, we present three different formulas for the moments \eqref{eq:fCotfas}. 
The proof of formula \eqref{eq:moments1} utilizes basic calculus of real integrals, while the derivation of representation \eqref{eq:moments2} employs complex analysis. 
The last formula \eqref{eq:moments3} stems from a combinatorial application of the results from \cite{DiCre} based on probability generating functions.

\begin{proposition}\label{prop:moments1}
For $(p,q)\in\mathbf{N}$ and $k\in\N$, the $k^{\text{th}}$ moment of $\Lambda_{p,q}^*$ is given by
\begin{align}
\mu_k\left(\Lambda_{p,q}^*\right)
&= 
\sum_{j\in\Z} \sum_{k_1+k_2+k_3=k} \sum_{k_1'+k_2'+k_3' = k} \binom{k}{k_1,k_2,k_3} \binom{k}{k_1',k_2',k_3'}
\mathbbm{1} \begin{Bmatrix*}[l] k_1' = k_1 + jp\\ k_2' = k_2 + jq \\ k_3' = k_3 - j(p+q) \  \end{Bmatrix*} . \label{eq:moments1}
\end{align}
\end{proposition}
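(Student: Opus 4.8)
The plan is to start from the integral representation \eqref{eq:fCotfas} for $\mu_k(\Lambda_{p,q}^*)$ and expand the integrand using the multinomial theorem. Writing $\left| e^{ix} + e^{iy} + e^{iz}\right|^{2k} = \left(e^{ix}+e^{iy}+e^{iz}\right)^k \overline{\left(e^{ix}+e^{iy}+e^{iz}\right)}^k$, I would apply the multinomial expansion to each factor separately, introducing index triples $(k_1,k_2,k_3)$ with $k_1+k_2+k_3=k$ for the holomorphic factor and $(k_1',k_2',k_3')$ with $k_1'+k_2'+k_3'=k$ for the anti-holomorphic one. This turns the integrand into a double sum of terms of the form $\binom{k}{k_1,k_2,k_3}\binom{k}{k_1',k_2',k_3'}\, e^{i(k_1-k_1')x}\, e^{i(k_2-k_2')y}\, e^{i(k_3-k_3')z}$, multiplied by the exponential factor $e^{ij(p(x-z)+q(y-z))}$ and summed over $j\in\Z$.

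Next I would collect, for fixed $j$, the total exponents of $x$, $y$, $z$: these are $k_1-k_1'+jp$, $k_2-k_2'+jq$, and $k_3-k_3'-j(p+q)$ respectively. The triple integral $\frac{1}{(2\pi)^3}\int_{-\pi}^\pi\int_{-\pi}^\pi\int_{-\pi}^\pi e^{i(\cdots)x}e^{i(\cdots)y}e^{i(\cdots)z}\diff x\diff y\diff z$ equals $1$ when all three exponents vanish and $0$ otherwise, by orthogonality of characters on the circle. Imposing the three vanishing conditions $k_1' = k_1 + jp$, $k_2' = k_2 + jq$, $k_3' = k_3 - j(p+q)$ exactly produces the indicator in \eqref{eq:moments1}; note the third condition is in fact implied by the first two together with $\sum k_i = \sum k_i' = k$, but keeping all three makes the statement symmetric. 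Summing the surviving terms over $j\in\Z$ and over both index triples yields precisely the claimed formula.

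The one genuine technical point — the main obstacle, such as it is — is justifying the interchange of the sum over $j\in\Z$ with the triple integral, since the sum over $j$ does not converge absolutely before integration (it is the Poisson-type sum already handled in the proof of Theorem \ref{thm:RV_pq} via the Poisson summation formula). The clean fix is to use exactly the reduction carried out there: by \eqref{formula: mu_k delta_0} and the subsequent computation, $\mu_k(\Lambda_{p,q}^*)$ equals a \emph{finite} sum $\frac{1}{(p+q)(2\pi)^2}\sum_{j=1}^{p+q}\int_{-\pi}^\pi\int_{-\pi}^\pi \bigl| e^{ix}+e^{iy}+e^{i\frac{px+qy-2\pi j}{p+q}}\bigr|^{2k}\diff x\diff y$, to which Fubini applies without difficulty; alternatively one observes that for each fixed pair $(k_1,k_2,k_3)$, $(k_1',k_2',k_3')$ with entries bounded by $k$, there is at most one $j\in\Z$ satisfying $k_1'=k_1+jp$ (since $p\ge 1$), so the sum over $j$ on the right-hand side is genuinely finite term-by-term and no convergence issue arises there. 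I would present the argument via the first route, citing the computation already established in the proof of Theorem \ref{thm:RV_pq}, then expand the finite-sum integrand by the multinomial theorem and apply character orthogonality on $[-\pi,\pi]^2$ in the variables $x,y$ (the $z$-variable having been eliminated), recovering \eqref{eq:moments1} after re-indexing. The remaining steps are routine bookkeeping with multinomial coefficients.
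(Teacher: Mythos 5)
Your proposal follows essentially the same route as the paper's proof: expand $\left|e^{ix}+e^{iy}+e^{iz}\right|^{2k}$ via the multinomial theorem applied to the holomorphic and anti-holomorphic factors, then reduce the triple integral to an indicator by orthogonality of $e^{in\theta}$ on $[-\pi,\pi]$, which yields \eqref{eq:moments1} directly. Your added justification for interchanging the $j$-sum with the integral (via the finite reduction from Theorem \ref{thm:RV_pq}, or the observation that at most one $j$ survives for each pair of index triples) is a point the paper glosses over, but it does not change the argument.
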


\begin{proof}
Using formula \eqref{eq:fCotfas}, it follows by simple calculations that 
\begin{align*}
\mu_k\left(\Lambda_{p,q}^*\right)
&= 
\frac{1}{(2\pi)^3} \sum_{j\in\Z} \int_{-\pi}^\pi \int_{-\pi}^\pi \int_{-\pi}^\pi
\left| e^{ix} + e^{iy} + e^{iz} \right|^{2k} e^{ij\left(p (x-z) + q(y-z) \right)}
\diff x\diff y\diff z \\
&=
\frac{1}{(2\pi)^3} \sum_{j\in\Z} \int_{-\pi}^\pi \int_{-\pi}^\pi \int_{-\pi}^\pi
\left( \sum_{k_1+k_2+k_3=k} \binom{k}{k_1,k_2,k_3} e^{i\left( k_1x + k_2y + k_3z \right)} \right)\\
 & \hspace*{4cm}\times\left( \sum_{k_1'+k_2'+k_3'=k} \binom{k}{k_1',k_2',k_3'} e^{-i\left( k_1'x + k_2'y + k_3'z \right)} \right)
 e^{ij\left(p(x-z)+ q(y-z)\right)}
\diff x\diff y\diff z\\
&=
\frac{1}{(2\pi)^3} \sum_{j\in\Z} \sum_{k_1+k_2+k_3=k} \sum_{k_1'+k_2'+k_3'=k} \binom{k}{k_1,k_2,k_3} \binom{k}{k_1',k_2',k_3'} \\
&\hspace*{4cm} \times \int_{-\pi}^\pi e^{ix\left(k_1 - k_1' + jp\right)} \diff x 
\int_{-\pi}^\pi e^{iy\left(k_2 - k_2' + jq\right)} \diff y 
\int_{-\pi}^\pi e^{iz\left(k_3 - k_3' - j(p+q)\right)}  \diff z \\
&= 
\sum_{j\in\Z} \sum_{k_1+k_2+k_3=k} \sum_{k_1'+k_2'+k_3' = k} \binom{k}{k_1,k_2,k_3} \binom{k}{k_1',k_2',k_3'}
\mathbbm{1} \begin{Bmatrix*}[l] k_1' = k_1 + jp\\ k_2' = k_2 + jq \\ k_3' = k_3 - j(p+q) \end{Bmatrix*} .
\end{align*}
\end{proof}

\begin{remark}\label{remark:number of closed walks}
\begin{enumerate}[(i)]
\item Based on the idea of an infinite nanotube as a rolled-up hexagonal lattice, and by summing  the number of walks between specific vertices on the hexagonal lattice, as detailed in \cite[Proposition 2]{DiCre}, we can derive an alternative expression for the number of closed walks on dual infinite $(p,q)$--nanotubes. 
Specifically, for $k\in\N$ we have
\begin{align}
\mu_k\left(\Lambda_{p,q}^*\right)
&=
\sum_{k_1+k_2+k_3=k} 
\binom{k}{k_1,k_2,k_3}^2 
\left( 1+2\sum_{l=1}^{\left\lfloor \frac{k_1}{p+q} \right\rfloor} 
\frac{\binom{2k_1-lq}{k_1+lp} 
\binom{k}{k_1-lq}}{\binom{k}{k_1} 
\binom{2(k-k_1)}{k-k_1}} 
\right).\label{eq:moments2}
\end{align}
In particular, for $p=5,q=0$ we get 
\begin{align*}
\mu_k\left(\Lambda^*_{5,0}\right)
&=
\sum_{k_1+k_2+k_3=k} \binom{k}{k_1,k_2,k_3}^2 \left( 1+f(k_1,k)\right),
\end{align*}
where
\begin{equation*}
f(k_1,k) :=
\frac{2((k-k_1)!)^2}{(2(k-k_1))!} \binom{2k_1}{k_1+5}{}_6F_5 \left(
\begin{matrix}
1,\frac{5-k_1}{5},\frac{6-k_1}{5},\frac{7-k_1}{5},\frac{8-k_1}{5},\frac{9-k_1}{5} \\
\frac{6+k_1}{5},\frac{7+k_1}{5} ,\frac{8+k_1}{5},\frac{9+k_1}{5},\frac{10+k_1}{5}  
\end{matrix};
-1
\right),
\end{equation*}
and ${}_6F_5$ is the \textit{generalized hypergeometric function} defined for $p,q\in\N$ as
\begin{align*}
{}_qF_p \left(\begin{matrix}
a_1,\ldots,a_p\\
b_1,\ldots,b_q
\end{matrix}; x\right)
:=
\sum_{j=0}^{\infty} \frac{\left(a_1\right)_k\cdot \ldots \cdot \left(a_p\right)_k}{\left(b_1\right)_k\cdot \ldots \cdot \left(b_q\right)_k} \cdot \frac{x^k}{k!},\quad a_1,\ldots,a_p,b_1,\ldots,b_q,x\in \R,
\end{align*}
$(z)_k$ being the rising factorial. 

Alternatively, for $k\in\N_0$, it holds
\begin{equation}
\mu_k\left( \Lambda^{*}_{p,q} \right) 
= 
\sum_{k_1+\ldots+k_7=k}
\binom{k}{k_1,\ldots,k_7} 3^{k_1} 
\mathbbm{1}
\left\{
\begin{array}{l}
pk-k_2-k_4+k_5+k_7 \equiv 0 \text{ mod }p\\
p\left(-k_2 + k_3 + k_5 - k_6\right) = q\left( k_2 + k_4 - k_5 - k_7 \right)
\end{array}
\right\}. 
\label{eq:moments3}
\end{equation}
Detailed proofs for both formulas \eqref{eq:moments2} and \eqref{eq:moments3} can be found in the Appendix.

\item It is straightforward to see that the inner sum of the right-hand side of \eqref{eq:moments2} is empty as $p+q\to\infty$, hence it implies the same weak convergence proven in Proposition \ref{proposition:weak_convergence_Lambdapq}. 
For $k\in\N_0$, it holds
\begin{align}
\mu_k\left(\Lambda_{p,q}^*\right) \xrightarrow[p+q\to\infty]{}\mu_k(\mathcal{T}) = \sum_{k_1+k_2+k_3=k}\binom{k}{k_1,k_2,k_3}^2. \label{eq:moment_convergence}
\end{align} 
Thus, expressions \eqref{eq:moments1} and \eqref{eq:moments3} also converge to the right-hand side of \eqref{eq:moment_convergence}.
\end{enumerate}
\end{remark}

\subsection{Moment generating function of $\Lambda_{p,q}^*$}

In this section, we derive formulas for the MGF 
$m_{\Lambda_{p,q}^*}(t) = \E \exp\left(t\Lambda_{p,q}^*\right)$, $t\in\R$,
of $\Lambda_{p,q}^*$ and its limiting behavior as $p+q$ tends to infinity. 
This involves the \textit{modified (or hyperbolic) Bessel functions of the first kind of order $n$}, $n\in\Z$, which can be defined based on the \textit{Bessel functions of the first kind of order $n$} as 
\begin{align*}
I_n(x)
:=
i^{-n}J_n(ix)
=
\sum_{k=0}^\infty \frac{1}{k! (k+n)!} \left( \frac{x}{2} \right)^{2k+n},\quad x\in\C.
\end{align*}
Notably, we focus on the particular case $n=0$, as it is of special use in the subsequent discussion.

Additionally, we need a modification of the following general integral representation of $I_0$, which can be found, e.g., in \cite{abra}:
\begin{align}
I_n(x) 
= 
\frac{1}{\pi} \int_0^\pi e^{x\cos(\varphi)}\cos(n\varphi) \diff \varphi. \label{eq:I0_integral_general}
\end{align}
By making a suitable substitution and applying formulas for the sum of angles in $\cos(\cdot)$ and $\sin(\cdot)$, Equation \eqref{eq:I0_integral_general} can be rewritten as
\begin{align}
I_0\left(\sqrt{\alpha^2+\beta^2}\right) 
= 
\frac{1}{2\pi} \int_0^{2\pi} \exp\left(\alpha\cos \varphi + \beta \sin \varphi\right)
\diff \varphi,\quad \alpha,\beta\in\R,\label{eq:I0_integral}
\end{align}
cf. Proposition \ref{prop:I0_integral} in Appendix.

\begin{theorem}\label{theorem:mgf}
For $(p,q)\in\mathbf{N}$, it holds
\begin{align*}
m_{\Lambda_{p,q}^*}(t) 
= 
\frac{e^{3t}}{\pi} \int_{0}^{\pi} 
e^{2t\cos\theta} \widehat{I}_{0}\left(4t\alpha_{p,q}(\theta), 4t\beta_{p,q}(\theta),p,q \right)
\diff \theta,\quad t\in\R,
\end{align*}
where 
\begin{align}
\widehat{I}_{0}\left(\alpha,\beta, p,q\right) 
&= 
\frac{1}{p+q}\sum_{j=0}^{p+q-1} \exp\left(
\alpha \cos\left( \frac{2\pi j}{p+q}\right)+
\beta \sin \left(\frac{2\pi j}{p+q}\right)
\right),\label{def:Kpq}\\
\alpha_{p,q}(\theta) 
&= 
\cos\left(\frac{\theta}{2}\right)\cos \left( \frac{\theta(p-q)}{2(p+q)}\right),\quad 
\beta_{p,q}(\theta) 
= 
\cos\left(\frac{\theta}{2}\right) \sin\left(\frac{\theta(p-q)}{2(p+q)}\right)\label{def:alphabeta}.
\end{align}
\end{theorem}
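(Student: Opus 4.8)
The plan is to start from the representation of $\Lambda_{p,q}^*$ given in Theorem \ref{thm:RV_pq} and the identity in part (ii) of the Remark, which rewrites the sum of three cosines appearing there as
\begin{align*}
\cos\mathcal{U} + \cos\left(\tfrac{p\mathcal{U}+2\pi\mathcal{J}_{p,q}}{p+q}\right) + \cos\left(\tfrac{q\mathcal{U}-2\pi\mathcal{J}_{p,q}}{p+q}\right)
= 2\cos^2\!\left(\tfrac{\mathcal{U}}{2}\right) + 2\cos\!\left(\tfrac{\mathcal{U}}{2}\right)\cos\!\left(\tfrac{(p-q)\mathcal{U}+4\pi\mathcal{J}_{p,q}}{2(p+q)}\right) - 1.
\end{align*}
Substituting into \eqref{theorem1:1} gives $\Lambda_{p,q}^* \eqd 1 + 4\cos^2(\mathcal{U}/2) + 4\cos(\mathcal{U}/2)\cos\!\big(\tfrac{(p-q)\mathcal{U}}{2(p+q)} + \tfrac{2\pi\mathcal{J}_{p,q}}{p+q}\big)$. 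The point of this rewriting is that only one trigonometric term now carries the discrete variable $\mathcal{J}_{p,q}$, and it does so additively inside a cosine, which is exactly the form needed to produce a discrete Bessel-type sum after expanding the exponential.

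Next I would condition on $\mathcal{U} = \theta$ and on $\mathcal{J}_{p,q} = j$, write $m_{\Lambda_{p,q}^*}(t) = \E\exp(t\Lambda_{p,q}^*)$, pull out the factor $e^{t(1+4\cos^2(\theta/2))}$, and use the half-angle identity $1 + 4\cos^2(\theta/2) = 3 + 2\cos\theta$ so that this factor becomes $e^{3t}e^{2t\cos\theta}$, matching the prefactor in the statement. The remaining expectation is over the single term
\begin{align*}
\exp\!\left(4t\cos\!\left(\tfrac{\theta}{2}\right)\cos\!\left(\tfrac{(p-q)\theta}{2(p+q)} + \tfrac{2\pi j}{p+q}\right)\right).
\end{align*}
Expanding the cosine of a sum, $\cos(A+B) = \cos A\cos B - \sin A\sin B$ with $A = \tfrac{(p-q)\theta}{2(p+q)}$ and $B = \tfrac{2\pi j}{p+q}$, turns the argument of the exponential into $4t\cos(\theta/2)\cos A \cos(2\pi j/(p+q)) - 4t\cos(\theta/2)\sin A\sin(2\pi j/(p+q))$, i.e. exactly $4t\alpha_{p,q}(\theta)\cos(2\pi j/(p+q)) + 4t\big(-\sin A \cdot \cos(\theta/2)\cdot 4t\big)$... more precisely it matches $4t\alpha_{p,q}(\theta)\cos(\tfrac{2\pi j}{p+q}) - 4t\cos(\theta/2)\sin A \sin(\tfrac{2\pi j}{p+q})$, with $\alpha_{p,q}(\theta) = \cos(\theta/2)\cos A$ and $\cos(\theta/2)\sin A = \beta_{p,q}(\theta)$ as defined in \eqref{def:alphabeta}. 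I need to double-check the sign on the $\beta$ term; since $\mathcal{J}_{p,q}$ is uniform on a full residue system modulo $p+q$, the map $j\mapsto -j$ (or $j \mapsto (p+q)-j$) is a bijection of the support, so replacing $\sin(2\pi j/(p+q))$ by its negative leaves the average unchanged, and the sign is immaterial — this is the small observation that reconciles the "$-\beta\sin$" with the "$+\beta\sin$" in \eqref{def:Kpq}.

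Finally, averaging over $\mathcal{J}_{p,q}\sim U\{0,\dots,p+q-1\}$ gives precisely $\widehat{I}_0(4t\alpha_{p,q}(\theta), 4t\beta_{p,q}(\theta), p, q)$ as defined in \eqref{def:Kpq}, and averaging over $\mathcal{U}\sim U(0,\pi)$ contributes the $\tfrac{1}{\pi}\int_0^\pi \cdots \diff\theta$; assembling the pieces yields the claimed formula. The main obstacles are bookkeeping rather than conceptual: (a) carefully carrying the half-angle and sum-of-angles identities so that the constants $\alpha_{p,q}, \beta_{p,q}$ come out exactly as in \eqref{def:alphabeta}, and (b) justifying the use of the symmetry $j \mapsto -j \bmod (p+q)$ to fix the sign of $\beta$ (equivalently, one could observe that $\widehat{I}_0$ is even in its second argument by the same reindexing, so the sign never matters). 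One should also note in passing that all interchanges of expectation/integration and series are legitimate because $\Lambda_{p,q}^*$ has compact support (bounded by Gershgorin as remarked in the introduction), so the MGF is finite and everything in sight converges absolutely for every $t\in\R$.
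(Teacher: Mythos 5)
Your proof is correct and follows essentially the same route as the paper's: condition on $\mathcal{U}=\theta$ and $\mathcal{J}_{p,q}=j$, pull out $e^{3t}e^{2t\cos\theta}$ via the half-angle identity, and reduce the remaining two cosines to $2\cos(\theta/2)\cos\bigl(\tfrac{(p-q)\theta}{2(p+q)}+\tfrac{2\pi j}{p+q}\bigr)$ by sum-to-product, which is exactly what the paper compresses into ``using the standard trigonometric relations.'' Your explicit observation that the reindexing $j\mapsto p+q-j$ makes $\widehat{I}_0$ even in its second argument, reconciling the $-\beta\sin$ from the angle-addition formula with the $+\beta\sin$ in \eqref{def:Kpq}, is a detail the paper leaves implicit and is handled correctly.
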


\begin{proof}
For $t\in\R$, we have
\begin{align*}
m_{\Lambda_{p,q}^*}(t) 
&:= 
\E e^{t\Lambda_{p,q}^*} 
\overset{\eqref{theorem1:1}}{=}
 e^{3t}~ \E \exp\left( 
 2t\left(
 \cos \mathcal{U} + \cos\left(\frac{p \mathcal{U} + 2\pi \mathcal{J}_{p,q}}{p+q} \right) + \cos\left( \frac{q \mathcal{U}-2\pi \mathcal{J}_{p,q}}{p+q}\right)
 \right) 
 \right)\\
 &=
 \frac{e^{3t}}{\pi}
 \int_0^{\pi} 
 e^{2t\cos \theta}
 \frac{1}{p+q}
  \sum_{j=0}^{p+q-1} 
 \exp \left( 2t\left(
	\cos\left(\frac{p\theta+2\pi j}{p+q} \right) + \cos\left( \frac{q\theta-2\pi j}{p+q}\right)
 \right) \right) 
 \diff \theta,
\end{align*}
where the inner term  
\begin{align*}
\frac{1}{p+q}
  \sum_{j=0}^{p+q-1} 
 \exp \left( 2t\left(
	\cos\left(\frac{p\theta+2\pi j}{p+q} \right) + \cos\left( \frac{q\theta-2\pi j}{p+q}\right)
 \right) \right)
\end{align*}
can be expressed in terms of the functions $\widehat{I}_0,\alpha_{p,q},\beta_{p,q}$ given in \eqref{def:Kpq} and \eqref{def:alphabeta} using the standard trigonometric relations.
\end{proof}
%%%%%%%%%%%%%%%%%%%%%%%%%%%%%%%%%%%%%%%%%%%%%%%%%%%%%%%

For the two classes of zigzag and armchair nanotubes, Theorem \ref{theorem:mgf} directly yields
\begin{corollary}\label{corollary:mfg}
For $t\in\R$ and $p\geq 3$, it holds 
\begin{align*}
m_{\Lambda_{p,p}^*}(t) 
&= 
\frac{e^{3t}}{\pi} \int_0^\pi 
e^{2t\cos\theta}
\widehat{I}_0 \left( 
\cos\left(\frac{\theta}{2}\right), 0, p, p
\right)
\diff \theta
,\\
m_{\Lambda_{p,0}^*}(t) 
&= 
\frac{e^{3t}}{\pi} \int_0^\pi 
e^{2t\cos\theta}
\widehat{I}_0 \left( 
\cos^2\left(\frac{\theta}{2}\right), \frac{\sin\theta}{2}, p, 0
\right)
\diff \theta.
\end{align*}
\end{corollary}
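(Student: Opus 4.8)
\textbf{Proof proposal for Corollary \ref{corollary:mfg}.}

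The plan is to specialize Theorem \ref{theorem:mgf} to the two cases $p=q$ and $q=0$ and simplify the parameters $\alpha_{p,q}(\theta)$ and $\beta_{p,q}(\theta)$ accordingly. By \eqref{def:alphabeta}, the only quantity that depends on the chiral structure beyond the summation length $p+q$ in $\widehat I_0$ is the phase $\tfrac{\theta(p-q)}{2(p+q)}$, so the whole argument reduces to evaluating this phase in each special case and feeding the result back into the general MGF formula.

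First I would handle the armchair case $p=q$. Here $p-q=0$, so $\tfrac{\theta(p-q)}{2(p+q)}=0$, whence $\alpha_{p,p}(\theta)=\cos(\theta/2)\cos 0=\cos(\theta/2)$ and $\beta_{p,p}(\theta)=\cos(\theta/2)\sin 0=0$. Substituting $\alpha=4t\alpha_{p,p}(\theta)$, $\beta=0$ into $\widehat I_0(\cdot,\cdot,p,p)$ as defined in \eqref{def:Kpq} and pulling the factor $4t$ out exactly as in the statement of Theorem \ref{theorem:mgf} (the stated corollary absorbs it into the first slot of $\widehat I_0$), Theorem \ref{theorem:mgf} gives directly
\[
m_{\Lambda_{p,p}^*}(t)
=
\frac{e^{3t}}{\pi}\int_0^\pi e^{2t\cos\theta}\,
\widehat I_0\!\left(\cos\tfrac{\theta}{2},\,0,\,p,\,p\right)\diff\theta,
\]
which is the first claimed identity. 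For the zigzag case $q=0$ I would instead compute $\tfrac{\theta(p-q)}{2(p+q)}=\tfrac{\theta p}{2p}=\tfrac{\theta}{2}$, so that $\alpha_{p,0}(\theta)=\cos(\theta/2)\cos(\theta/2)=\cos^2(\theta/2)$ and $\beta_{p,0}(\theta)=\cos(\theta/2)\sin(\theta/2)=\tfrac12\sin\theta$ by the double-angle formula. Plugging these into Theorem \ref{theorem:mgf} and again moving $4t$ into the arguments of $\widehat I_0$ yields the second identity.

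This corollary is essentially a bookkeeping specialization rather than a genuine theorem, so there is no serious obstacle; the only point requiring a little care is the normalization convention — one must check that the factor $4t$ appearing in $\widehat I_0(4t\alpha_{p,q}(\theta),4t\beta_{p,q}(\theta),p,q)$ in Theorem \ref{theorem:mgf} has been correctly absorbed so that the arguments displayed in the corollary ($\cos(\theta/2)$ and $0$, respectively $\cos^2(\theta/2)$ and $\tfrac12\sin\theta$) are consistent with the $2t\mathcal X_{p,q}$ exponent and not off by a factor involving $t$. Once that convention is fixed, both formulas follow by direct substitution, and the restriction $p\geq 3$ is just the standing feasibility requirement $p+q\geq 5$ (together with $q\leq p$) for chiral vectors in $\mathbf N$, which forces $p\geq 3$ in the armchair case and is automatic in the zigzag case with $p\geq 5$.
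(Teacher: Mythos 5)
Your proposal is correct and is exactly the direct-substitution argument the paper intends (the paper gives no proof, stating only that Theorem \ref{theorem:mgf} ``directly yields'' the corollary): setting $p=q$ gives $\alpha_{p,p}(\theta)=\cos(\theta/2)$, $\beta_{p,p}(\theta)=0$, and setting $q=0$ gives $\alpha_{p,0}(\theta)=\cos^2(\theta/2)$, $\beta_{p,0}(\theta)=\tfrac12\sin\theta$, as you compute. The normalization issue you flag is real: Theorem \ref{theorem:mgf} and Proposition \ref{proposition:mfg_limit} both carry the factor $4t$ inside $\widehat I_0$ (resp.\ $I_0\bigl(4t\cos\tfrac{\theta}{2}\bigr)$), so the corollary's displayed arguments should read $4t\cos\bigl(\tfrac{\theta}{2}\bigr)$ and $4t\cos^2\bigl(\tfrac{\theta}{2}\bigr)$, $2t\sin\theta$ --- the statement as printed has dropped the $4t$, and your hedge correctly identifies this as a typographical omission rather than a mathematical obstacle.
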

The function $\widehat{I}_{0}$ in Theorem \ref{theorem:mgf} and Corollary \ref{corollary:mfg} can be considered as a discretized version of the integral representation of the modified Bessel function given in \eqref{eq:I0_integral}.
A natural question is the limiting behavior of this discretization as the elements of the chiral vector $(p,q)$, and therefore also the circumference of the nanotube, tend to infinity. 
The answer gives

\begin{proposition}\label{proposition:mfg_limit}
As $p+q\rightarrow \infty$, $\frac{p}{p+q}\rightarrow c\in[0,1]$, we get the following convergence:
\begin{align*}
m_{\Lambda^*_{p,q}}(t) 
\longrightarrow 
\frac{e^{3t}}{\pi} 
\int_0^\pi e^{2t\cos\theta}I_0\left(4t\cos\frac{\theta}{2}\right) \diff \theta.
\end{align*}
\end{proposition}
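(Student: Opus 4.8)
The plan is to start from the explicit MGF formula in Theorem \ref{theorem:mgf}, namely
$m_{\Lambda_{p,q}^*}(t) = \frac{e^{3t}}{\pi}\int_0^\pi e^{2t\cos\theta}\,\widehat I_0\bigl(4t\alpha_{p,q}(\theta),4t\beta_{p,q}(\theta),p,q\bigr)\diff\theta$,
and to show that the integrand converges pointwise in $\theta$ to $e^{2t\cos\theta}I_0\bigl(4t\cos\frac\theta2\bigr)$, after which dominated convergence finishes the argument. The key observation is that $\widehat I_0(\alpha,\beta,p,q)$ is exactly a Riemann-type sum: writing $j/(p+q)=x_j$, the points $2\pi x_j$ are equispaced in $[0,2\pi)$ with spacing $2\pi/(p+q)\to 0$, so $\widehat I_0(\alpha,\beta,p,q)=\frac1{p+q}\sum_{j=0}^{p+q-1}\exp\bigl(\alpha\cos(2\pi x_j)+\beta\sin(2\pi x_j)\bigr)$ converges to $\frac1{2\pi}\int_0^{2\pi}\exp(\alpha\cos\varphi+\beta\sin\varphi)\diff\varphi$, which by \eqref{eq:I0_integral} equals $I_0\bigl(\sqrt{\alpha^2+\beta^2}\bigr)$. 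Here the integrand $\varphi\mapsto\exp(\alpha\cos\varphi+\beta\sin\varphi)$ is continuous and $2\pi$-periodic, so the convergence of the Riemann sum is standard and in fact uniform over $(\alpha,\beta)$ in any bounded set.

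Next I would track the arguments $\alpha=4t\alpha_{p,q}(\theta)$ and $\beta=4t\beta_{p,q}(\theta)$. From \eqref{def:alphabeta}, $\alpha_{p,q}(\theta)^2+\beta_{p,q}(\theta)^2=\cos^2(\theta/2)$ for every $p,q$, so $\sqrt{\alpha^2+\beta^2}=4|t|\,|\cos(\theta/2)|=4|t|\cos(\theta/2)$ on $[0,\pi]$, independent of $(p,q)$ and of $c$. Hence the pointwise limit of $\widehat I_0\bigl(4t\alpha_{p,q}(\theta),4t\beta_{p,q}(\theta),p,q\bigr)$ is $I_0\bigl(4t\cos(\theta/2)\bigr)$ regardless of the value of $c$, which is why the limiting MGF does not depend on $c$; one does not even need the hypothesis $\frac{p}{p+q}\to c$ beyond ensuring $p+q\to\infty$. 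To make the Riemann-sum convergence rigorous uniformly in $\theta$, I would note that $(\alpha,\beta)=(4t\alpha_{p,q}(\theta),4t\beta_{p,q}(\theta))$ ranges in the compact disk of radius $4|t|$, so the continuous function $\exp(\alpha\cos\varphi+\beta\sin\varphi)$ has a modulus of continuity controlled uniformly there, giving $\sup_{\theta\in[0,\pi]}\bigl|\widehat I_0(\dots)-I_0(4t\cos(\theta/2))\bigr|\to 0$.

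Finally, dominated convergence: for fixed $t$, every term $\exp(\alpha\cos(2\pi x_j)+\beta\sin(2\pi x_j))$ with $\sqrt{\alpha^2+\beta^2}\le 4|t|$ is bounded by $e^{4|t|}$, so $\widehat I_0\le e^{4|t|}$ uniformly, and $e^{2t\cos\theta}\le e^{2|t|}$, making the whole integrand bounded by the constant $e^{2|t|+4|t|}$ on $[0,\pi]$ — an integrable dominating function. Therefore
\[
m_{\Lambda^*_{p,q}}(t)\longrightarrow \frac{e^{3t}}{\pi}\int_0^\pi e^{2t\cos\theta}I_0\bigl(4t\cos\tfrac\theta2\bigr)\diff\theta,
\]
as claimed. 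I do not anticipate a genuine obstacle here; the only mild care needed is the uniformity of the Riemann-sum convergence over the $\theta$-dependent arguments, which is handled by the compactness of the parameter range noted above. (As a sanity check, this matches the alternative route via Proposition \ref{proposition:weak_convergence_Lambdapq}: $\Lambda_{p,q}^*\tod\mathcal T$ together with the uniform boundedness of the MGFs on compacts would also yield convergence of $m_{\Lambda_{p,q}^*}(t)$ to $m_{\mathcal T}(t)$, and one can identify $m_{\mathcal T}(t)=\frac{e^{3t}}{\pi}\int_0^\pi e^{2t\cos\theta}I_0(4t\cos\tfrac\theta2)\diff\theta$ by the same trigonometric reduction applied to $\mathcal X_c$.)
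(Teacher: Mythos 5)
Your proposal is correct and follows essentially the same route as the paper: both start from Theorem \ref{theorem:mgf} and recognize $\widehat I_0$ as a Riemann sum for the integral representation \eqref{eq:I0_integral} of $I_0$. You are in fact slightly cleaner, since you justify the interchange of limit and integral (uniformity over the compact range of $(\alpha,\beta)$ plus dominated convergence), which the paper leaves implicit, and you observe that $\alpha_{p,q}(\theta)^2+\beta_{p,q}(\theta)^2=\cos^2(\theta/2)$ holds exactly for all $(p,q)$ — so the hypothesis $\frac{p}{p+q}\to c$ is not actually needed — whereas the paper first passes to the $c$-dependent limits of $\alpha_{p,q}$ and $\beta_{p,q}$ and only then sees the $c$-dependence cancel.
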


\begin{proof}
One immediately sees that under the above assumptions
\begin{align*}
\frac{p-q}{p+q}
\xrightarrow[\substack{p+q\rightarrow \infty \\ \frac{p}{p+q}\rightarrow c}]{} 
2c-1.
\end{align*}
Hence, it holds 
\begin{align*}
\alpha_{p,q}(\theta) 
&\xrightarrow[\substack{p+q\rightarrow \infty \\ \frac{p}{p+q}\rightarrow c}]{} 
\cos \left(\frac{\theta}{2} \right)\cos\left( \frac{\theta (2c-1)}{2}\right),\quad \beta_{p,q}(\theta)
\xrightarrow[\substack{p+q\rightarrow \infty \\ \frac{p}{p+q}\rightarrow c}]{} 
\cos\left(\frac{\theta}{2}\right)\sin\left(\frac{\theta(2c-1)}{2}\right),
\end{align*}
which implies 
\begin{align*}
\widehat{I}_0(4t\alpha_{p,q}(\theta),4t\beta_{p,q}(\theta),p,q) 
\xrightarrow[\substack{p+q\rightarrow \infty \\ \frac{p}{p+q}\rightarrow c}]{} 
&\frac{1}{2\pi} \int_0^{2\pi} e^{4t\cos \left(\frac{\theta}{2} \right)\left(\cos\left( \frac{\theta (2c-1)}{2}\right) \cos\varphi + \sin\left(\frac{\theta (2c-1)}{2}\right)\sin\varphi
\right) }
\diff \varphi\\
= &I_0\left(4t\cos\left(\frac{\theta}{2}\right)\right).
\end{align*}
\end{proof}

The combination of the MGF's convergence and the results from \cite{bille2023random} leads to a new integral relation for $I_0$.

\begin{corollary}\label{corollary:Integral formula}
It holds
\begin{align*}
\int_0^1 I_0^3 \left( 2i\sqrt{t\log x}\right) \diff x 
= 
\frac{e^{3t}}{\pi} \int_{0}^{\pi} e^{2t\cos\theta} I_0\left(4t\cos \frac{\theta}{2}\right) \diff \theta, \quad t\in\R.
\end{align*}
\end{corollary}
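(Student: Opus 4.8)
The plan is to prove the identity by showing that \emph{both} sides equal the moment generating function $m_{\mathcal{T}}(t)=\E e^{t\mathcal{T}}$ of the random eigenvalue of the triangular lattice.

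For the right-hand side almost nothing is left to do: by Proposition~\ref{proposition:mfg_limit} the quantity $\frac{e^{3t}}{\pi}\int_0^\pi e^{2t\cos\theta}I_0\!\left(4t\cos\frac{\theta}{2}\right)\diff\theta$ is the limit of $m_{\Lambda_{p,q}^*}(t)$ as $p+q\to\infty$, while Proposition~\ref{proposition:weak_convergence_Lambdapq} gives $\Lambda_{p,q}^*\tod\mathcal{T}$. Since every random eigenvalue considered here has support in a fixed compact interval (Gershgorin, as noted in the introduction), weak convergence forces convergence of moment generating functions on all of $\R$, so the right-hand side equals $m_{\mathcal{T}}(t)$. (If a self-contained argument is preferred, one may instead use $\mathcal{T}\eqd|e^{iX_1}+e^{iX_2}+e^{iX_3}|^2$ with $X_1,X_2,X_3$ i.i.d.\ $U(0,2\pi)$, which is the case $c=0$ in the proof of Proposition~\ref{proposition:weak_convergence_Lambdapq} rewritten by translation invariance; conditioning on $X_1,X_2$ and writing $a=e^{iX_1}+e^{iX_2}$, one has $\E_{X_3}e^{t|a+e^{iX_3}|^2}=e^{t(|a|^2+1)}I_0(2t|a|)$ by \eqref{eq:I0_integral_general}, and then $|a|^2=2+2\cos(X_1-X_2)$, $|a|=2\bigl|\cos\tfrac{X_1-X_2}{2}\bigr|$ together with a symmetrisation of the integral about $\theta=\pi$ reproduce the right-hand side.)

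For the left-hand side I would start from the fact that $I_0$ is even and entire, so $I_0(2i\sqrt{s})=\sum_{k\ge0}\frac{(-s)^k}{(k!)^2}$ defines an entire function of $s$ with no branch ambiguity; taking $s=t\log x$ and cubing,
\[
I_0^3\!\left(2i\sqrt{t\log x}\right)
=\sum_{n\ge0}\frac{(-t\log x)^n}{(n!)^2}\sum_{k_1+k_2+k_3=n}\binom{n}{k_1,k_2,k_3}^{2},
\]
using $\binom{n}{k_1,k_2,k_3}^2=(n!)^2/(k_1!k_2!k_3!)^2$; by \cite[Proposition~3]{bille2023random} (equivalently \eqref{eq:moment_convergence}) the inner sum is $\mu_n(\mathcal{T})=\E\mathcal{T}^n$. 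Integrating over $x\in(0,1)$ and using $\int_0^1(\log x)^n\diff x=(-1)^n n!$ (substitute $x=e^{-u}$) then gives $\int_0^1 I_0^3\!\left(2i\sqrt{t\log x}\right)\diff x=\sum_{n\ge0}\frac{\mu_n(\mathcal{T})}{n!}t^n=m_{\mathcal{T}}(t)$, which matches the right-hand side and finishes the proof.

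The one step that requires genuine care is the term-by-term integration: for $t>0$ the integrand $I_0^3\!\left(2i\sqrt{t\log x}\right)=I_0^3\!\left(2\sqrt{|t||\log x|}\right)$ is a positive real that blows up as $x\downarrow0$. I would justify the interchange by Tonelli's theorem, using that $\mathcal{T}$ has compact support — indeed $0\le\mathcal{T}\le9$, hence $\mu_n(\mathcal{T})\le9^n$ — so that
\[
\sum_{n\ge0}\frac{\mu_n(\mathcal{T})}{(n!)^2}|t|^n\int_0^1|\log x|^n\diff x
=\sum_{n\ge0}\frac{\mu_n(\mathcal{T})}{n!}|t|^n\le e^{9|t|}<\infty;
\]
concretely this says $I_0^3\!\left(2\sqrt{|t||\log x|}\right)$, whose growth near $x=0$ is only of order $e^{6\sqrt{|t||\log x|}}$, is integrable on $(0,1)$, which is exactly the content of the geometric bound on the moments.
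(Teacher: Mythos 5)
Your proof is correct, and for the right-hand side it follows the paper's own route exactly: the paper likewise identifies the right-hand side as $\lim_{p+q\to\infty} m_{\Lambda_{p,q}^*}(t)$ via Proposition~\ref{proposition:mfg_limit} and then invokes Proposition~\ref{proposition:weak_convergence_Lambdapq} to conclude that this limit is $m_{\mathcal{T}}(t)$; your explicit remark that uniform compactness of the supports is what upgrades weak convergence to convergence of MGFs on all of $\R$ is in fact a more careful justification than the one-line appeal to "a distribution is uniquely determined by its MGF" in the paper. Where you genuinely diverge is the left-hand side: the paper simply cites \cite[Proposition 5]{bille2023random} for the identity $\int_0^1 I_0^3\left(2i\sqrt{t\log x}\right)\diff x = m_{\mathcal{T}}(t)$, whereas you rederive it from scratch by expanding $I_0^3$ as an entire power series, recognising the coefficients as the moments $\mu_n(\mathcal{T})=\sum_{k_1+k_2+k_3=n}\binom{n}{k_1,k_2,k_3}^2$ from \eqref{eq:moment_convergence}, and integrating term by term with $\int_0^1(-\log x)^n\diff x=n!$, the interchange being legitimately handled by Tonelli for $t>0$ (all terms positive) together with the geometric bound $\mu_n(\mathcal{T})\le 9^n$. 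This buys self-containedness at the cost of a slightly longer argument; both are valid, and your computation is a correct proof of the cited external result.
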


\begin{proof}
According to \cite[Proposition 5]{bille2023random}, the MGF for the random eigenvalue of the triangular lattice is given by
\begin{align*}
m_{\mathcal{T}}(t)
&=
\varphi_\mathcal{T}(-it) 
=
\int_0^1 I_0^3 \left( 2i\sqrt{t\log x} \right) \diff x,
\end{align*}
where $\varphi_\mathcal{T}(\cdot)$ is the characteristic function of $\mathcal{T}$.
On the other hand, Proposition \ref{proposition:mfg_limit} provides the right-hand side of the claim. 
Since a distribution is uniquely determined by its MGF, and given Proposition \ref{proposition:weak_convergence_Lambdapq}, the claim holds.
\end{proof}

%%%%%%%%%%%%%%%%%%%%%%%%%%%%%%%%%%%%%%%%%%%%%%%%%%%%%%%

%--------------------------------------------------------------------------------------------------------------------------------------------------------------------------------------------------

\section{Probability density function of $\Lambda^*_{p,q}$} \label{sect:density}
Next, we derive explicit formulas for the PDF of dual zigzag ($q=0$) and armchair nanotubes ($p=q$). 
The main idea is to apply the law of total probability across all possible values of $\mathcal{J}_{p,q}$ in both cases. 
For each specific value of $\mathcal{J}_{p,q}$, the distribution of \eqref{theorem1:1} resembles a modified arcsine law supported of intervals determined by the chosen value $\mathcal{J}_{p,q}=j$. 

For the chiral case ($p >q >0$), we introduce a straightforward algorithm for numerically computing the PDF of a dual infinite $(p,q)$--nanotube and illustrate this method on the example $(p,q)=(5,1)$.

%--------------------------------------------------------------------------------------------------------------------------------------------------------------------------------------------------

\subsection{Dual zigzag nanotubes $\boldsymbol{(p\ge 5,~ q=0})$} \label{sect:density_subsect:p0}

\begin{proposition}\label{prop:densityp0}
The PDF of the random eigenvalue $\Lambda_{p,0}^*$ of a dual $(p,0)$--nanotube is given by 
\begin{align}
f_{\Lambda_{p,0}^*}(x) 
&=
\frac{1}{p \pi}
\sum_{j=0}^{p-1} 
\frac{\mathbbm{1}\left\{ x\in \mathcal{I}_j \right\}}{\sqrt{8\left(1+\cos\left( \frac{2\pi j}{p} \right)\right) - 
\left( 3+2\cos\left( \frac{2\pi j}{p} \right) - x \right)^2}} \label{formula:densityp0}
\end{align}
with the intervals 
\begin{align}
\mathcal{I}_j
=
\left( \left( 2\left|  \cos\left( \frac{\pi j}{p} \right)  \right|  - 1\right)^2; \left( 2\left|  \cos\left( \frac{\pi j}{p} \right)  \right|  + 1\right)^2 \right), \quad j = 0,\ldots,p-1. \label{eq:intervals}
\end{align}
\end{proposition}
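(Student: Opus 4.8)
The plan is to start from Corollary~\ref{corollary:distribution_Lambda}, which already gives the representation
\begin{align*}
\Lambda_{p,0}^*
\overset{d}{=}
4\left(1+\cos\left(\tfrac{2\pi \mathcal{J}_{p,0}}{p}\right)\right)\mathcal{V}^2
-4\sin\left(\tfrac{2\pi \mathcal{J}_{p,0}}{p}\right)\mathcal{V}\sqrt{1-\mathcal{V}^2}+1,
\end{align*}
with $\mathcal{V}$ and $\mathcal{J}_{p,0}\sim U\{0,\dots,p-1\}$ independent and $f_{\mathcal{V}}$ given in \eqref{density_U2}. First I would condition on $\mathcal{J}_{p,0}=j$ and use the law of total probability: $f_{\Lambda_{p,0}^*}(x)=\frac1p\sum_{j=0}^{p-1} f_{\Lambda\mid j}(x)$, so it suffices to identify each conditional density and its support. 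The cleanest route is to go back one step further and write the conditional random variable directly in terms of $\mathcal{U}\sim U(0,\pi)$ via Theorem~\ref{thm:RV_pq}/the Remark: for fixed $j$,
\begin{align*}
\Lambda_{p,0}^*\mid j
\;\overset{d}{=}\;
3+2\cos\mathcal{U}
+4\cos\!\left(\tfrac{\mathcal{U}}{2}\right)\cos\!\left(\tfrac{\mathcal{U}}{2}+\tfrac{2\pi j}{p}\right)\!,
\end{align*}
obtained by collapsing the two outer cosines using the sum-to-product identity stated in the Remark (for $q=0$ the phase $\tfrac{(p-q)\mathcal U+4\pi\mathcal J}{2(p+q)}$ becomes $\tfrac{\mathcal U}{2}+\tfrac{2\pi j}{p}$). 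Setting $W=2\cos(\mathcal U/2)$ and $a_j=2\pi j/p$, one checks by elementary trigonometry (the double-angle formula $2\cos\mathcal U = W^2-2$ and $2\cos(\mathcal U/2)\cos(\mathcal U/2+a_j)=\cos a_j\,(1+\cos\mathcal U)+\text{odd part}$) that this equals $g_j(W):=1+\cos a_j\,W^2 + (\text{a term linear in }W\text{ times }\sqrt{\cdot})$, i.e.\ precisely the $j$-th summand of Corollary~\ref{corollary:distribution_Lambda}; so either representation leads to the same conditional law, and I would work with whichever makes the density transform shortest.

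The main computation is then a one-variable density transformation. From \eqref{density_U2}, $\mathcal V=\cos(\mathcal U/2)$ has density $\tfrac{2}{\pi\sqrt{1-v^2}}\1\{0\le v<1\}$. The conditional variable is $Y_j=\psi_j(\mathcal V)$ where, after simplification, $\psi_j(v)=1+2\cos a_j\,(2v^2-1)\cdot\tfrac12\cdot 2 + \dots$ — concretely one shows $Y_j = 3+2\cos a_j + 2(1+\cos a_j)(2v^2-1) - 4\sin a_j\, v\sqrt{1-v^2}$ reduces, via $2v^2-1=\cos\mathcal U$ and $2v\sqrt{1-v^2}=\sin\mathcal U$, to
\begin{align*}
Y_j \;=\; 3+2\cos a_j + 2\bigl(\cos a_j\cos\mathcal U - \sin a_j\sin\mathcal U\bigr) + 2\cos\mathcal U
\;=\; 3+2\cos a_j + 2\cos(\mathcal U+a_j)+2\cos\mathcal U,
\end{align*}
which we recognize as a shifted version of $3+2(\cos\mathcal U+\cos(\mathcal U+a_j))$ — but the slicker bookkeeping is to observe directly that $Y_j = 3+2\cos a_j + r_j\cos(\mathcal U - \phi_j)\cdot 2$... rather than chase the exact phase, the key structural fact I will use is that $Y_j$, as a function of $\mathcal U\in(0,\pi)$, has the form $C_j + R_j\cos(\mathcal U+\phi_j)$ after combining $2\cos\mathcal U$ with the cross term — so on a single monotone branch of this cosine the inverse function exists. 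Differentiating $x=C_j+R_j\cos(\mathcal U+\phi_j)$ gives $\diff x = -R_j\sin(\mathcal U+\phi_j)\diff\mathcal U$, hence the change-of-variables factor is $\bigl|R_j\sin(\mathcal U+\phi_j)\bigr|^{-1}=\bigl(R_j^2-(x-C_j)^2\bigr)^{-1/2}$; matching constants one finds $C_j=3+2\cos a_j$ and $R_j^2 = 8(1+\cos a_j)$, exactly the radicand in \eqref{formula:densityp0}. Multiplying by the density of $\mathcal U$ (uniform on $(0,\pi)$, density $1/\pi$) and by the $1/p$ from averaging over $j$ yields the stated formula; the apparent two-branch issue resolves because as $\mathcal U$ ranges over $(0,\pi)$ the argument $\mathcal U+\phi_j$ sweeps an interval of length $\pi$, covering each value of $\cos$ once, so there is no factor-of-two correction — or, if it does sweep more, the two contributions coincide and the square-root density absorbs them into the single term shown.

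Finally I would pin down the support $\mathcal I_j$: since $x = C_j + R_j\cos(\mathcal U+\phi_j)$ with $\mathcal U+\phi_j$ ranging over an interval on which $\cos$ attains all of $[-1,1]$ (this is where one must check the endpoints $\mathcal U=0,\pi$ carefully — that is the one genuinely fiddly point), $x$ ranges over $(C_j-R_j,\,C_j+R_j)$. Then $C_j\pm R_j = 3+2\cos a_j \pm 2\sqrt{2}\sqrt{1+\cos a_j}$; using $1+\cos a_j = 2\cos^2(\pi j/p)$ and $3+2\cos a_j = 1 + 4\cos^2(\pi j/p)$ gives $C_j\pm R_j = 1+4\cos^2(\pi j/p)\pm 4|\cos(\pi j/p)| = \bigl(2|\cos(\pi j/p)|\pm 1\bigr)^2$, which is exactly \eqref{eq:intervals}. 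The main obstacle is the branch/endpoint analysis for the map $\mathcal U\mapsto Y_j$: one must verify that over $\mathcal U\in(0,\pi)$ the effective phase $\mathcal U+\phi_j$ (which depends on $j$ through $\phi_j$, itself a function of $a_j$) really does traverse the cosine monotonically enough that the naive Jacobian gives the right constant with no missing multiplicities — handling the cases $\cos(\pi j/p)>0$, $<0$, and the degenerate $j$ with $\cos(\pi j/p)=0$ (where $\mathcal I_j$ collapses to $(1,1)$ and the term drops out) separately. Everything else is routine trigonometric simplification.
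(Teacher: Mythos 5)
Your overall route is the same as the paper's: condition on $\mathcal{J}_{p,0}=j$ via the law of total probability, reduce each conditional variable to an affine function of a single cosine, and read off an arcsine density from the one--variable transformation formula. Your identification of $C_j=3+2\cos(2\pi j/p)$ and $R_j^2=8(1+\cos(2\pi j/p))$, and the endpoint algebra $C_j\pm R_j=\left(2\left|\cos(\pi j/p)\right|\pm1\right)^2$, are all correct. The genuine gap sits exactly where you flag it, and your proposed resolution fails. Writing $\phi_j=\pi j/p$, you apply $\mathcal{U}\mapsto C_j\pm R_j\cos(\mathcal{U}+\phi_j)$ to $\mathcal{U}\sim U(0,\pi)$, so the phase sweeps the window $(\phi_j,\pi+\phi_j)$. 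On a length--$\pi$ window that is not of the form $[k\pi,(k+1)\pi]$ the cosine is neither injective nor onto $[-1,1]$: it covers $\left[-1,-\left|\cos\phi_j\right|\right]$ twice and misses $\left(\left|\cos\phi_j\right|,1\right]$ altogether. Hence neither of your fallbacks (``covering each value of $\cos$ once'', or ``the two contributions coincide and are absorbed into the single term'') is true; for an individual $j$ with $0<\phi_j<\pi$, $\phi_j\neq\pi/2$, the conditional density produced by your coupling is not the $j$--th summand of \eqref{formula:densityp0}, and its support is not $\mathcal{I}_j$.

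There are two clean ways to close the gap. The paper's way is to absorb the phase into the modulus \emph{before} restricting $\mathcal{U}$ to a half period: conditionally on $\mathcal{J}_{p,0}=j$ one has $\Lambda^*_{p,0}\eqd\left|1+e^{2\pi ij/p}+e^{i\mathcal{U}}\right|^2=\left|r_{p,j}+e^{i(\mathcal{U}-\phi_j)}\right|^2$ with $r_{p,j}=2\left|\cos\phi_j\right|$ and $\mathcal{U}$ uniform over a \emph{full} period, so $\mathcal{U}-\phi_j$ is again uniform over a full period, $\cos(\mathcal{U}-\phi_j)$ is exactly arcsine on $(-1,1)$, and the conditional law is the arcsine law on $\left((r_{p,j}-1)^2,(r_{p,j}+1)^2\right)$ with no branch issue. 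Alternatively, keep your coupling and pair $j$ with $p-j$: one checks $Y_{p-j}=C_j+4\cos\phi_j\cos(\mathcal{U}-\phi_j)$ while $Y_j=C_j+4\cos\phi_j\cos(\mathcal{U}+\phi_j)$, and since $\cos(\mathcal{U}-\phi_j)=\cos(-\mathcal{U}+\phi_j)$ with $-\mathcal{U}\sim U(-\pi,0)$, the two conditional laws together reconstruct the law of $C_j+4\cos\phi_j\cos(\mathcal{W}+\phi_j)$ for $\mathcal{W}\sim U(-\pi,\pi)$, i.e.\ twice the claimed arcsine summand; averaging over $j$ then gives \eqref{formula:densityp0}. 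Either way, a full--period argument is the missing ingredient. (A minor further point you correctly sense: if $p$ is even and $j=p/2$ the conditional law is a unit mass at $x=1$, so the mixture has an atom and \eqref{formula:densityp0} describes only its absolutely continuous part; this caveat applies equally to the proposition as stated.)
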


\begin{proof}
By Theorem \ref{thm:RV_pq}, we have the following identities:
\begin{align*}
\Lambda_{p,0}^*
\eqd 
\left| 1 + e^{i\frac{2\pi \mathcal{J}_{p,0}}{p}} + e^{i\mathcal{U}} \right|^{2}
\eqd
\left| r_{p, \mathcal{J}_{p,0}} + e^{i \mathcal{U}} \right|^{2}
=
1  + 2r_{p,\mathcal{J}_{p,0}}\cos \mathcal{U} + r_{p,\mathcal{J}_{p,0}}^2
\end{align*} 
with independent random variables $\mathcal{U}\sim U (0,\pi)$, $\mathcal{J}_{p,0}\sim U\lbrace 0,\ldots, p-1 \rbrace$, and 
\begin{align*}
r_{p,\mathcal{J}_{p,0}}
:= 
\left| 1 + e^{i\frac{2\pi \mathcal{J}_{p,0}}{p}}  \right| 
= 
\sqrt{ 2 + 2\cos\left( \frac{2\pi \mathcal{J}_{p,0}}{p} \right)}
=
2\left|  \cos\left( \frac{\pi \mathcal{J}_{p,0}}{p} \right)  \right|.
\end{align*}
Using this and the law of total  probability, we obtain
\begin{align*}
f_{\Lambda^*_{p,0}}(x)
&= 
\frac{1}{p}\sum_{j=0}^{p-1} f_{\left.\Lambda_{p,0}^*\right| \mathcal{J}_{p,0}=j}(x).
\end{align*}
Furthermore, using the transformation formula, the conditional PDF we need is given by the arcsine law:
\begin{align*}
f_{\left.\Lambda_{p,0}^*\right| \mathcal{J}_{p,0} = j}(x) 
= 
f_{1 + r_{p,j}^2 + 2r_{p,j}\cos \mathcal{U}} (x)
&=
\frac{1}{\pi\sqrt{\left( x - \left( r_{p,j} - 1 \right)^2 \right)\left(\left( r_{p,j} + 1 \right)^2 - x  \right)}}
\mathbbm{1}\lbrace x\in \left( (r_{p,j} - 1)^2; (r_{p,j} + 1)^2 \right) \rbrace\\
&= 
\frac{1}{\pi \sqrt{4r_{p,j}^2 - \left( r_{p,j}^2 + 1 - x \right)^2}} 
\mathbbm{1}\left\{ x\in \left( \left(r_{p,j} - 1\right)^2; \left(r_{p,j} + 1\right)^2 \right) \right\}.
\end{align*} 
\end{proof}

\begin{example} \label{ex:50}
In the special case where $p=5$, $q=0$ the numerical intervals \eqref{eq:intervals} are 
\begin{align*}
\mathcal{I}_0 &= \left(0.382; 6.854 \right),  \quad
\mathcal{I}_1 = \left( 0.146; 2.618 \right), \quad
\mathcal{I}_2 = \left( 0.146; 2.618 \right),  \quad
\mathcal{I}_3 = \left( 0.382; 6.854 \right), \quad
\mathcal{I}_4 = \left( 1; 9  \right).
\end{align*}
These intervals match our numerical results illustrated in Figure \ref{fig:nanotube_density_fit}.

Additionally, it should be noted that the PDF has a logarithmic pole at each endpoint of the intervals $\mathcal{I}_j$ for $j = 0, \ldots, 4$. 
The heights of the blue peaks shown in Figure \ref{fig:nanotube_density_fit} are due to computational inaccuracies.

\begin{figure}[H]
\centering
\hspace*{-1cm}\includegraphics[scale=0.55]{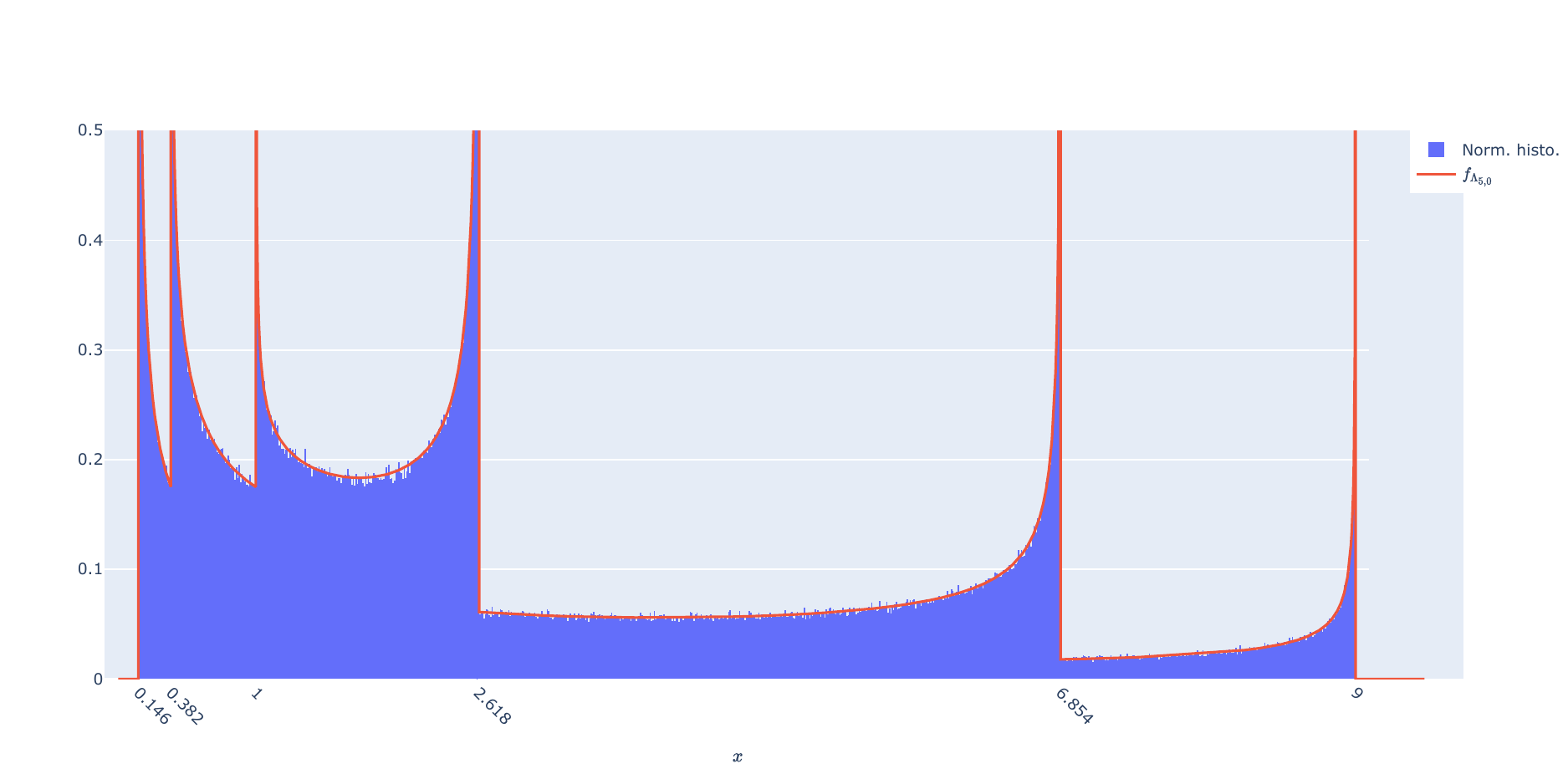}
\caption{Histogram of eigenvalues of the dual graph of facets of  $(5,0)$--nanotube $C_{100000}$ (blue) and the graph of $f_{\Lambda_{5,0}^*}$ (red).}\label{fig:nanotube_density_fit}
\end{figure}
\end{example}

%--------------------------------------------------------------------------------------------------------------------------------------------------------------------------------------------------

\subsection{Dual armchair nanotubes $\boldsymbol{(p=q})$} \label{sect:density_subsect:pp}
\begin{proposition}
For $p\geq 2$, the PDF of the random eigenvalue $\Lambda_{p,p}^*$ of a dual $(p,p)$--nanotube is given by 
\begin{align}
f_{\Lambda_{(p,p)}^*}(x)
&= 
\frac{1}{p}\left( \frac{1}{2}f_0(x) +  \sum_{j=1}^{p-1} 
f_{j}(x) 
\right), 
\label{eq:(p,p)_sumformula}
\end{align}
where
\begin{align}
&f_{0}(x) 
= 
\frac{\mathbbm{1}\left\{0 < x < 9 \right\}}{\pi\sqrt{x\left(3+2\sqrt{x}-x\right)}}
+
\frac{\mathbbm{1}\lbrace 0 < x < 1 \rbrace}{\pi\sqrt{x\left(3 - 2\sqrt{x}-x\right)}}, \label{formula:f_1}\\
&f_j(x) = f_{4\cos^2\left(\mathcal{V}\right) + 4 a_j \cos \left( \mathcal{V}\right) + 1}(x)
= 
\begin{cases}
	%1.Case
	\frac{\mathbbm{1}\left\{ 1 \leq x < 5 + 4 a_j\right\}
	}{\pi
	\sqrt{\left( 4 - \left(-a_j + \sqrt{a_j^2 + x - 1} \right)^2 \right)
	\left( a_j^2 + x - 1 \right)}	
	}, &2j\leq p,\\
	%2.Case
	\frac{1}{\pi\sqrt{a_j^2 + x - 1 }}
	\left(
	\frac{\mathbbm{1}\left\{ 1 - a_j^2 < x < 1 \right\}}
	{\sqrt{ 4 - \left( a_j + \sqrt{a_j^2 + x - 1} \right)^2 }} 
	+
	\frac{\mathbbm{1}\left\{ 1 - a_j^2 < x < 5 + 4a_j \right\}
	}{\sqrt{ 4 - \left( - a_j + \sqrt{a_j^2 + x - 1} \right)^2 }}
	\right)	
	 , &2j >p,
\end{cases}\label{formula:f_j}
\end{align}
with $a_j:= \cos\left(\sfrac{j\pi}{p}\right)$, $j\in\lbrace 1,\ldots,p-1\rbrace$.
\end{proposition}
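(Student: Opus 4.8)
The plan is to mirror the proof of Proposition~\ref{prop:densityp0} but with the extra cosine term surviving. Starting from Theorem~\ref{thm:RV_pq} together with the armchair simplification in part (ii) of the Remark, one has
\begin{align*}
\Lambda_{p,p}^*
\eqd
4\cos^2\left(\frac{\mathcal{U}}{2}\right)
+ 4\cos\left(\frac{\mathcal{U}}{2}\right)\cos\left(\frac{\pi \mathcal{J}_{p,p}}{p}\right)
+ 1,
\end{align*}
with $\mathcal{U}\sim U(0,\pi)$ and $\mathcal{J}_{p,p}\sim U\{0,\dots,2p-1\}$ independent. Writing $\mathcal{V}:=\cos(\mathcal{U}/2)$, which by \eqref{density_U2} has density $\frac{2}{\pi\sqrt{1-v^2}}\mathbbm{1}\{0\le v<1\}$, and conditioning on $\mathcal{J}_{p,p}=j$ gives $\Lambda_{p,p}^*\mid\{\mathcal{J}_{p,p}=j\}\eqd 4\mathcal{V}^2+4a_j\mathcal{V}+1$ with $a_j=\cos(j\pi/p)$. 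Note $a_{2p-j}=a_j$ by evenness of cosine and $a_0=1$, $a_p=-1$; so in the sum over $j\in\{0,\dots,2p-1\}$ the value $j=0$ (giving $a_0=1$) appears once, $j=p$ gives $a_p=-1$ which coincides with the $a_0$-type computation up to a sign inside, and each pair $\{j,2p-j\}$ for $1\le j\le p-1$ contributes the same conditional density twice, which after dividing by $2p$ yields the stated prefactors $\frac1p(\frac12 f_0 + \sum_{j=1}^{p-1}f_j)$. One must double-check the bookkeeping for $j=p$ (where $a_p=-1$): the map $v\mapsto 4v^2-4v+1=(2v-1)^2$ on $[0,1)$ produces exactly the second summand of $f_0$ in \eqref{formula:f_1}, while $j=0$ with $(2v+1)^2$ produces the first; so $f_0$ as written already absorbs both endpoints $j=0$ and $j=p$, and the remaining sum runs $1\le j\le p-1$ — this indexing subtlety is the first thing to verify carefully.

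The core computation is the conditional density of $Y_j:=4\mathcal{V}^2+4a_j\mathcal{V}+1=g_j(\mathcal{V})$ where $g_j(v)=4v^2+4a_jv+1$. The function $g_j$ is a parabola in $v$ with vertex at $v^*=-a_j/2$ and minimum value $1-a_j^2$. On the domain $v\in[0,1)$ the behaviour depends on the sign of $v^*$, i.e. on the sign of $a_j$, equivalently on whether $2j\le p$ ($a_j\ge 0$, vertex at or left of $0$, so $g_j$ is increasing on $[0,1)$, injective) or $2j>p$ ($a_j<0$, vertex in $(0,1)$ when $-a_j/2<1$, so $g_j$ decreases then increases and is \emph{two-to-one} on part of the range). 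Solving $g_j(v)=x$ gives $v=\tfrac12(-a_j\pm\sqrt{a_j^2+x-1})$; only the branch(es) landing in $[0,1)$ contribute. Applying the transformation formula $f_{Y_j}(x)=\sum_{v:\,g_j(v)=x}\frac{f_{\mathcal{V}}(v)}{|g_j'(v)|}$ with $g_j'(v)=8v+4a_j=\pm 4\sqrt{a_j^2+x-1}$ and $f_{\mathcal{V}}(v)=\frac{2}{\pi\sqrt{1-v^2}}$, and simplifying $1-v^2$ using $1-v^2=1-\tfrac14(-a_j\pm\sqrt{\cdot})^2=\tfrac14\big(4-(-a_j\pm\sqrt{a_j^2+x-1})^2\big)$, produces precisely the two cases of \eqref{formula:f_j}. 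The indicator intervals come from requiring both $a_j^2+x-1\ge 0$ (i.e. $x\ge 1-a_j^2$) and the relevant root to lie in $[0,1)$; working out these inequalities yields $x<1$ for the "decreasing" branch (root in $(0,1)$ needs $g_j(1)=5+4a_j>x$... and $g_j(0)=1>x$) versus $x<5+4a_j$ for the "increasing" branch, and when $2j\le p$ only the increasing branch on $[0,1)$ survives with $g_j(0)=1\le x<5+4a_j=g_j(1)$.

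The main obstacle is the casework for $2j>p$: here $g_j$ is non-monotone on $[0,1)$, so one genuinely gets two preimages on an overlapping range, and the two indicator sets $\{1-a_j^2<x<1\}$ and $\{1-a_j^2<x<5+4a_j\}$ must be derived by carefully tracking which root $\tfrac12(-a_j\pm\sqrt{a_j^2+x-1})$ lies in $[0,1)$ as $x$ ranges over $[1-a_j^2,\infty)$ — the "+" root is always $\ge 0$ and exits $[0,1)$ when it hits $1$, i.e. when $x=g_j(1)=5+4a_j$, while the "$-$" root is in $[0,1)$ only while it stays nonnegative, i.e. for $x\le g_j(0)=1$. I would also need to confirm the boundary/measure-zero cases ($a_j=0$ exactly, which happens when $p$ is even and $j=p/2$; and whether endpoints are open or closed) do not affect the density, and handle $f_0$ (the $j=0$ and $j=p$ contributions) separately since there $a_j=\pm1$ degenerates $g_j$ to a perfect square $(2v\pm1)^2$, giving a single branch each and the two arcsine-type poles recorded in \eqref{formula:f_1}. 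Everything else — the law of total probability split, Euler's formula manipulations, the density \eqref{density_U2} — is already available from Theorem~\ref{thm:RV_pq}, Corollary~\ref{corollary:distribution_Lambda}, and the preceding Remark, so the proof is essentially this one conditional transformation done with attention to the two monotonicity regimes.
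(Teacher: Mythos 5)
Your proposal follows essentially the same route as the paper's proof: total probability over $\mathcal{J}_{p,p}$, reduction to $j\in\{0,\dots,p\}$ via the evenness $a_{2p-j}=a_j$ (yielding the $\frac1p(\frac12 f_0+\sum_{j=1}^{p-1}f_j)$ bookkeeping with $f_0$ absorbing the degenerate perfect-square cases $j=0$ and $j=p$), and then the transformation formula applied to $g_j(v)=4v^2+4a_jv+1$, monotone on $[0,1)$ when $a_j\ge 0$ and split into two monotone branches when $a_j<0$. The branch and interval analysis you describe (the ``$+$'' root exiting at $x=5+4a_j$, the ``$-$'' root surviving only for $x\le 1$) matches the paper's casework exactly, so the plan is correct and complete in approach.
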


\begin{proof}
First, note that formula (\ref{eq:(p,p)_sumformula}) is a direct consequence of the law of total probability, where we sum up over all possible values $j$ of $\mathcal{J}_{p,p}\sim U\lbrace 0,1,\ldots, 2p-1\rbrace$. \\
Let $\mathcal{U}\sim U(-\pi,\pi),~  \mathcal{V}\sim U\left(0,\sfrac{\pi}{2}\right)$ be two independent random variables.
Due to Theorem \ref{thm:RV_pq}, it holds
\begin{align}
\Lambda_{(p,p)}^* 
&\overset{d}{=}
3+2\left( 
\cos \mathcal{U} + \cos\left( \frac{p\, \mathcal{U} + 2\pi \mathcal{J}_{p,p}}{2p} \right)  + 
\cos\left( \frac{p\, \mathcal{U} - 2\pi \mathcal{J}_{p,p}}{2p} \right) \right) \notag\\
&= 
4\cos^2\left(\frac{\mathcal{U}}{2}\right) + 4\cos\left(\frac{\pi \mathcal{J}_{p,p}}{p}\right) \cos \left(\frac{\mathcal{U}}{2}\right) + 1 
\overset{d}{=} 
4\cos^2 \mathcal{V} + 4\cos\left(\frac{\pi \mathcal{J}_{p,p}}{p}\right) \cos \mathcal{V} + 1, \label{formula:polynomial}
\end{align}
resulting in a polynomial of degree 2 in $\cos \mathcal{V}$, 
where for given $p$ the coefficient $4\cos\left(\sfrac{\pi \mathcal{J}_{p,p}}{p}\right)$ takes only $p+1$ different values although $\mathcal{J}_{p,p}$ takes $2p$ different values, which is a direct consequence of the periodicity of cosine.
This means, we need to consider values $j\in\left\{ 0,\ldots,p \right\}$ only.

For $j \in\lbrace 0,p\rbrace$, simple computations yield
\begin{align*}
f_{4\cos^2 \mathcal{V} + 4\cos\left(\frac{\pi j}{p}\right) \cos \mathcal{V} + 1}(x) =
\begin{cases}
f_{\left(1 + 2\cos \mathcal{V} \right)^2}(x) 
= 
\frac{\mathbbm{1}\left\{1\leq x < 9 \right\}}{\pi\sqrt{x\left(3+2\sqrt{x}-x\right)}}, &j = 0,\\
f_{\left(1 - 2\cos \mathcal{V}  \right)^2}(x) 
=
\frac{\mathbbm{1}\lbrace 0 < x \leq 1 \rbrace}{\pi\sqrt{x\left(3+2\sqrt{x}-x\right)}}+\frac{\mathbbm{1}\lbrace 0 < x < 1 \rbrace}{\pi\sqrt{x\left(3 - 2\sqrt{x}-x\right)}},\quad &j = p,
\end{cases}
\end{align*}
which directly yields (\ref{formula:f_1}).

If $2j=p$, we get
\begin{align*}
f_{4\cos^2 \mathcal{V} + 4\cos\left(\frac{\pi j}{p}\right) \cos \mathcal{V} + 1}(x) 
= 
f_{4\cos^2 \mathcal{V} + 1}(x) 
= 
\frac{1}{\pi\sqrt{(5-x)(x-1)}}\mathbbm{1}\left\{ 1 < x < 5\right\},
\end{align*}
which is the arcsine distribution on the interval $(1,5)$. 
Note that this case occurs only if $p$ is an even number. 

For the next two cases, note that the function $\cos: (0,\sfrac{\pi}{2}) \to (0,1)$ is monotonically decreasing. 
Hence, replacing $x$ by the random variable $\mathcal{V}$ yields the standard cosine distribution with PDF:
\begin{align*}
f_{\cos \mathcal{V}}(x) 
= 
\frac{2}{\pi\sqrt{1-x^2}} \mathbbm{1}\lbrace 0 \leq x < 1 \rbrace.
\end{align*}

\noindent If $2j < p$, which is equivalent to $\cos\left(\sfrac{j\pi}{p}\right) > 0$, it becomes evident that the functions
\begin{align*}
h_j(x)&:= 4x^2 + 4\cos\left(\frac{j\pi}{p}\right)x + 1,\quad j\in\lbrace 1,\ldots,p-1 \rbrace
\end{align*}
are monotonically increasing on $[0,1]$, and, therefore, bijective with inverse 
\begin{align*}
h_j^{-1}(y) &= \frac{1}{2}\left(-\cos\left(\frac{ j\pi }{p}\right) +\sqrt{\cos^2\left(\frac{ j\pi }{p}\right)+y-1} \right).
\end{align*}
Their derivatives $\left(h_j^{-1}\right)'$ have the property
\begin{align*}
\left(h_j^{-1}\right)'(y) &= \frac{1}{4\sqrt{\cos^2\left(\frac{j\pi}{p}\right)+y-1}} > 0, \quad y\in\left[1,5+4\cos\left( \frac{j\pi}{p} \right)\right]. 
\end{align*}
Then, the pdf of $h_j\left(\cos V \right)$ with $2j < p$ can be easily computed applying the transformation formula qs
\begin{align}
f_{h_j\left(\cos \mathcal{V}\right)}(x) 
&= 
f_{4\cos^2 \mathcal{V} + 4\cos\left(\frac{\pi j}{p}\right) \cos \mathcal{V} + 1}(x)
= 
f_{\cos \mathcal{V}}\left(h_j^{-1}(x)\right) \cdot \left| \left(h_j^{-1}\right)'(x) \right|, \label{density_h_j}
\end{align}
yielding the first case in (\ref{formula:f_j}).

If $2j>p$, i.e., it holds that $\cos\left(\sfrac{j\pi}{p}\right)<0$, the function $h_j$ is not bijective, and, therefore, the procedure presented in the previous case can not be applied directly.  
However, we bypass this problem by splitting the function $h_j$ into two bijective subfunctions as illustrated in Figure \ref{fig:h_j_decomposition}:
\begin{align*}
h_j(x) = h_j^-(x) + h_j^+(x) ,
\end{align*}
where 
\begin{align*}
 h_j^-(x) = h_j(x) \mathbbm{1}\lbrace h_j'(x)<0 \rbrace, \quad h_j^+(x) = h_j(x) \mathbbm{1}\lbrace h_j'(x)>0 \rbrace.
\end{align*}
Then, the PDF of $h_j(\cos \mathcal{V})$ is just the sum of the densities of $h_j^-(\cos \mathcal{V})$ and $h_j^+(\cos \mathcal{V})$, which yields in the second case of (\ref{formula:f_j}).

\begin{figure}
\begin{center}
\includegraphics[scale=0.4]{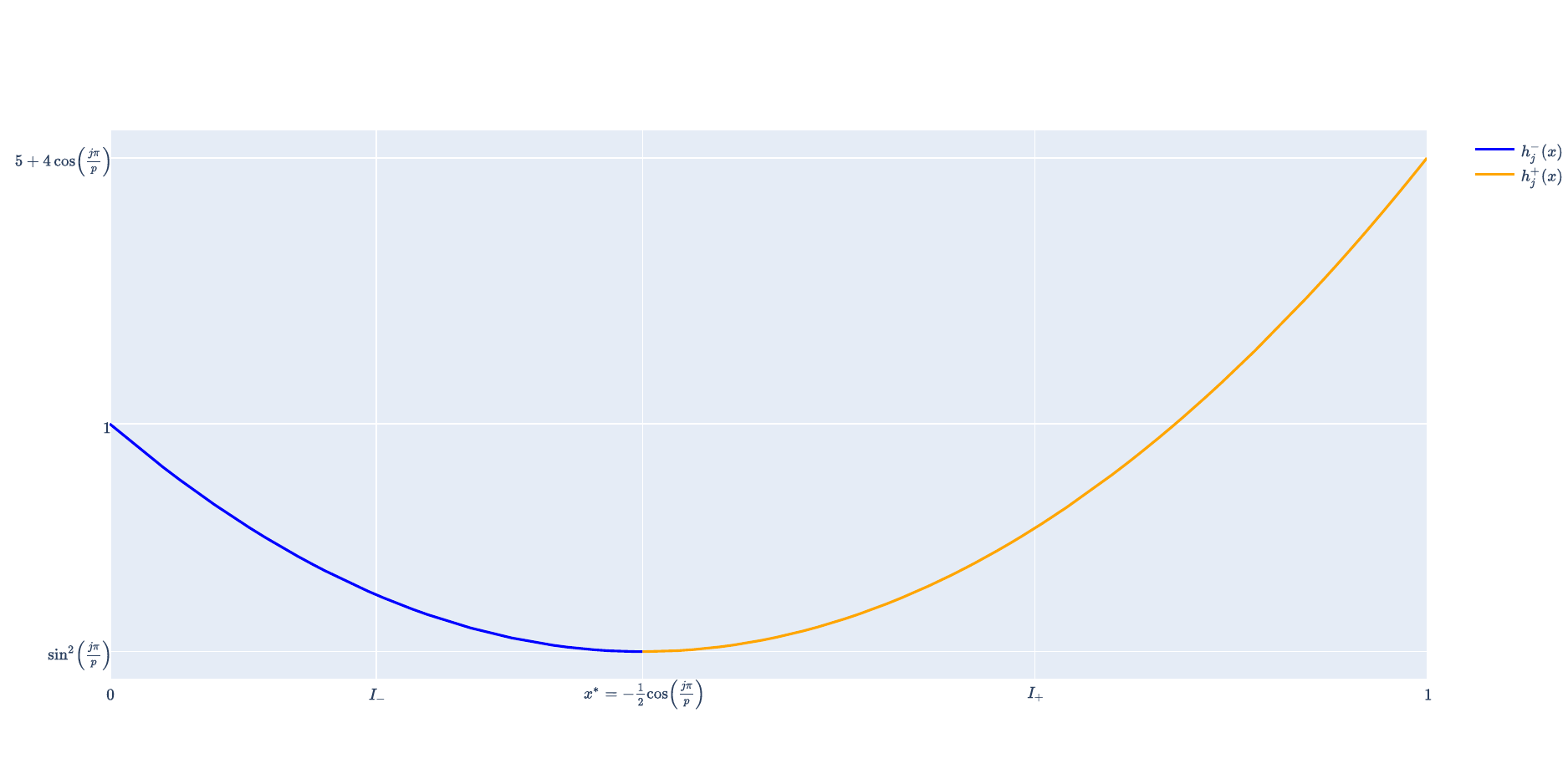}
\caption{Decomposition of $h_j$ into two bijective functions.}\label{fig:h_j_decomposition}
\end{center}
\end{figure}

\end{proof}

\begin{example}
For every $n=60+10r$ with $r\in\N_{0}$, there exists a finite $(5,5)$--nanotube among the isomers of $C_n$, cf. Construction \ref{construction:55--nanotube} in Appendix.
The smallest example $n = 60$ with no hexagonal belts and two caps has the second largest normalized Newton polynomials of order $k$ for every $k\geq 4$.
 
As $r\rightarrow\infty$, the empirical PDF of a random eigenvalue of these nanotubes converge to $\Lambda_{5,5}^*$ with explicit PDF given as
\begin{align*}
f_{\Lambda_{(5,5)}^*}(x)&= \frac{1}{10}\sum_{j=0}^{5} f_j(x), 
%\label{eq:(5,5)_sumformula}
\end{align*}
where
\begin{align*}
f_{0}(x)
&= 
\frac{\mathbbm{1}\lbrace 0 < x < 9 \rbrace}{\pi\sqrt{x\left(3+2\sqrt{x}-x\right)}},\quad\quad  
f_5(x)
= 
\frac{\mathbbm{1}\lbrace 0 < x < 1 \rbrace}{\pi\sqrt{x\left( 3-2\sqrt{x}-x \right)}},\\
f_1(x)
&=
\frac{4\cdot \mathbbm{1}\left\{ \frac{5-\sqrt{5}}{8} < x < 6+\sqrt{5} \right\}}{\pi\sqrt{\left(1-\frac{1}{64}\left(1+\sqrt{5}-\sqrt{16x+2\sqrt{5}-10}\right)^2\right)\left(16x+2\sqrt{5}-10\right)}},\\
f_2(x)
&= 
\frac{4\cdot \mathbbm{1}\left\{ \frac{5+\sqrt{5}}{8} < x < 4+\sqrt{5} \right\}}{\pi\sqrt{\left(1-\frac{1}{64}\left(1-\sqrt{5}+\sqrt{16x-2\sqrt{5}-10}\right)^2\right)\left(16x-2\sqrt{5}-10\right)}},\\
f_3(x)
&= 
\frac{4\cdot \mathbbm{1}\left\{ \frac{5+\sqrt{5}}{8} < x < 6-\sqrt{5} \right\}}{\pi\sqrt{\left(1-\frac{1}{64}\left(1-\sqrt{5}-\sqrt{16x-2\sqrt{5}-10}\right)^2\right)\left(16x-2\sqrt{5}-10\right)}} ,\\
f_4(x)
&= 
\frac{4\cdot \mathbbm{1}\left\{ \frac{5-\sqrt{5}}{8} < x < 4-\sqrt{5} \right\}}{\pi\sqrt{\left(1-\frac{1}{64}\left(1+\sqrt{5}+\sqrt{16x+2\sqrt{5}-10}\right)^2\right)\left(16x+2\sqrt{5}-10\right)}}.
\end{align*}
Its graph is given in Figure \ref{fig:pdf_lambda_55}.
\end{example}

\begin{figure}[H]
\begin{center}
\includegraphics[scale=0.4]{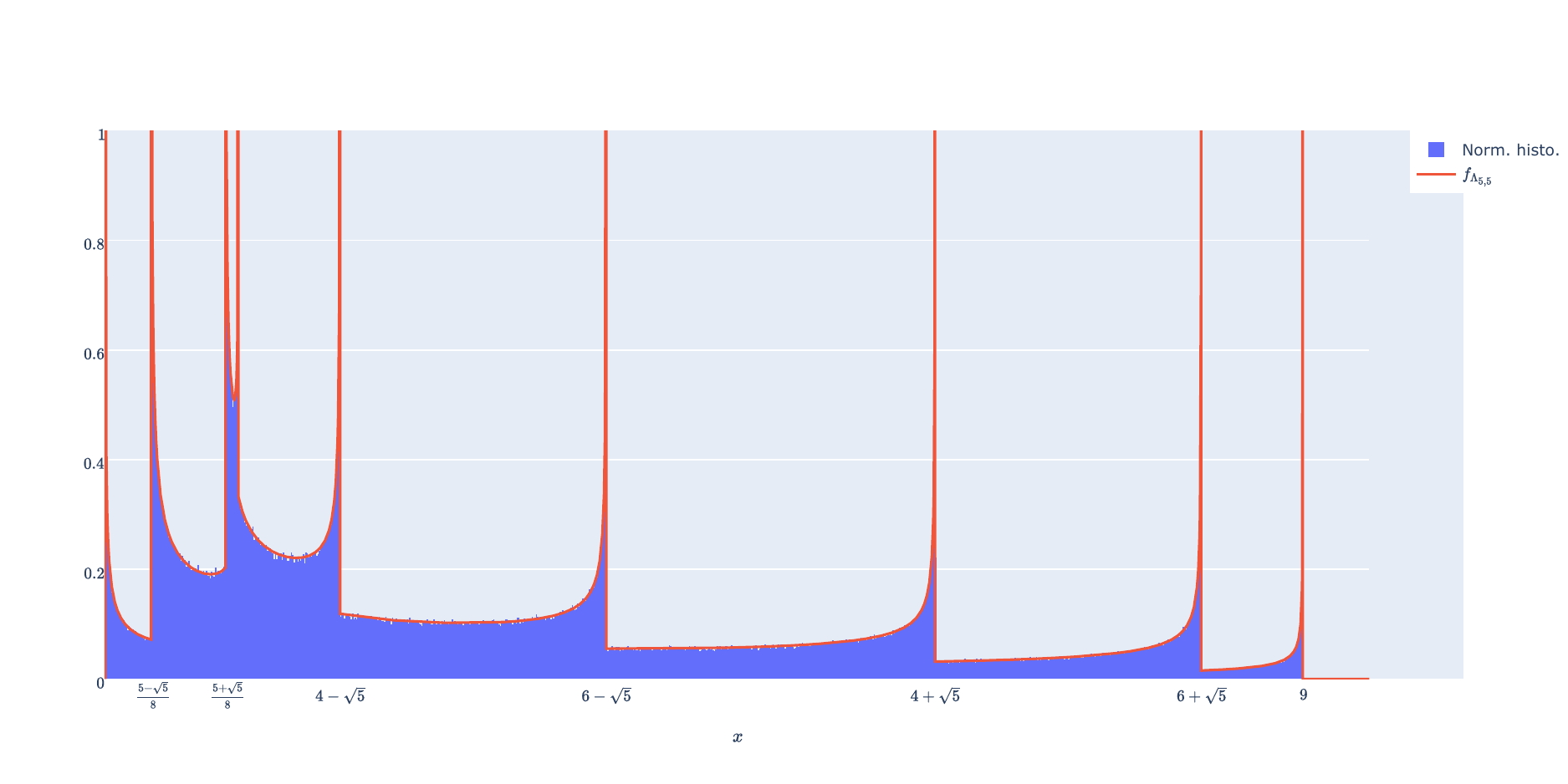}
\caption{Normalized histogram (blue shaded area) depicting $10^7$ samples of $\Lambda_{(5,5)}^*$, alongside the graph of function $f_{\Lambda_{(5,5)}^*}$ (red line). Note that the first and fourth peak counted by the left are observed for $x = 0$ and $x = 1$.}\label{fig:pdf_lambda_55}
\end{center}
\end{figure}

%--------------------------------------------------------------------------------------------------------------------------------------------------------------------------------------------------

\subsection{Chiral dual nanotubes $\boldsymbol{(p>q>0})$} \label{sect:density_subsect:pq}

In contrast to the previous cases of zigzag and armchair nanotubes, deriving a general closed-form formula for the PDF of $\Lambda_{p,q}^*$ when $p>q>0$ is not feasible.
However, by conditioning on the value of $\mathcal{J}_{p,q}$ using the law of total probability, and expressing $\Lambda_{p,q}^*$ as a function of a truncated arcsine distribution, we can construct an algorithm to numerically compute the desired PDF. 
Let $\mathcal{J}_{p,q}\sim U\lbrace 0,\ldots,p+q-1\rbrace$ and 
$\mathcal{U}\sim U(0,\pi)$ be independent. 
Then the algorithm proceeds as follows:\\
For every fixed $\mathcal{J}_{p,q} = j\in\lbrace 0,\ldots,p+q-1\rbrace$:
\begin{enumerate}[(i)]
\item Apply basic trigonometric calculus and rewrite the right--hand side of \eqref{theorem1:1} as a function $\varphi_j$ depending solely on 
$\mathcal{V}_{p,q} := \cos \frac{\mathcal{U}}{p+q}$.\\
Since $p+q>0$, the function $\cos\frac{x}{p+q}$ is bijective for $x\in (0,\pi)$. Therefore, it holds:
\begin{equation*}
f_{\mathcal{V}_{p,q}}(x)
%=
%f_{\cos\frac{\mathcal{U}}{p+q}}(x) 
=
\frac{p+q}{\pi\sqrt{1-x^2}}\mathbbm{1}_{\left(\cos\frac{\pi}{p+q},1\right)}(x) .
\end{equation*}
\item Numerically compute any local extrema of $\varphi_j$ within the interval 
$\left(\cos \frac{\pi}{p+q},1\right)$.
\item Divide the interval $\left(\cos\frac{\pi}{p+q},1\right)$ into subintervals based on the extrema found in the previous step, ensuring that $\varphi_j$ is bijective on each subinterval. 
\item  Numerically compute the PDF $\widehat{f}_{\varphi_j(\mathcal{V}_{p,q})}$ of $\varphi_j\left(\mathcal{V}_{p,q}\right)$ piecewise on these subintervals.
\end{enumerate}
Finally, aggregating these transformations yields:
\begin{align*}
f_{\Lambda_{p,q}^*}(x) 
\approx
 \frac{1}{p+q}\sum_{j=0}^{p+q-1} \widehat{f}_{\varphi_j(\mathcal{V}_{p,q})}(x).
\end{align*}

We now apply the above procedure to the infinite dual $(5,1)$--nanotube. 
Firstly, notice that the second isomer of $C_{60}$ (according to the spiral enumeration) is a finite $(5,1)$--nanotube, which is the smallest possible finite nanotube aside from the $(5,0)$--nanotube. 
Secondly, according to \cite{Grimme17}, the $(5,1)$--nanotube is among the three $C_{60}$ isomers with the highest relative molecular energy, making it one of the least chemically stable isomers. 
All subsequent computations can be verified and adopted to other chiral dual nanotubes using our Python notebook \cite{PythonBille24}.

\begin{example}[PDF of $\Lambda_{5,1}^*$]
We aim to derive the PDF of
\begin{align*}
\Lambda_{5,1}^* &\overset{d}{=} 
3 + 2\left( \cos \mathcal{U} + \cos\left(\frac{5U+2\pi \mathcal{J}_{5,1}}{6}\right) + \cos\left(\frac{\mathcal{U}-2\pi \mathcal{J}_{5,1}}{6}\right) \right)
\end{align*}
with independent $\mathcal{U}\sim U(0,\pi)$ and $\mathcal{J}_{5,1}\sim U\lbrace 0,1,\ldots,5 \rbrace$.
Applying the relation for $\cos(x \pm y)$ and the multiple-angle formulas
\begin{align*}
\cos (nx) 
&=
2^{n-1}\cos^n(x) + n\sum_{k=1}^{\lfloor \sfrac{n}{2} \rfloor}
\frac{(-1)^k}{k} \binom{n-k-1}{k-1} 2^{n-2k-1} \cos^{n-2k}(x),\\
\sin(nx) 
&=
\sin(x)P_{n-1}(\cos(x))
\end{align*}
with $n\in\N$ and $x\in\R$, where $P_{n}$ is the $n$-th Chebyshev polynomial of the second kind, which can be defined (similar to $T_n$) as the unique solution of
\begin{align*}
P_n(\cos \theta) \sin \theta 
= 
\sin((n+1)\theta),
\end{align*}
yields the expression $\Lambda_{5,1}^* \eqd \varphi_{\mathcal{J}_{5,1}}\left(\mathcal{V}_{5,1}\right)$, where
\begin{align}
\varphi_{\mathcal{J}_{5,1}}\left(\mathcal{V}_{5,1}\right)
:&=
64 \mathcal{V}_{5,1}^6 + 
32 c_{\mathcal{J}_{5,1}} \mathcal{V}_{5,1}^5 - 
32\left(3 + d_{\mathcal{J}_{5,1}} \sqrt{1-\mathcal{V}_{5,1}^2}\right)\mathcal{V}_{5,1}^4 - 
40 c_{\mathcal{J}_{5,1}} \mathcal{V}_{5,1}^3 \notag \\ 
&+ 
12\left( 2 d_{\mathcal{J}_{5,1}} \sqrt{1-\mathcal{V}_{5,1}^2} + 
3 \right)\mathcal{V}_{5,1}^2 + 
12 c_{\mathcal{J}_{5,1}} \mathcal{V}_{5,1} + 
1, \label{formula:V_polynom}
\end{align}
the PDF of $\mathcal{V}_{5,1}:=\cos \sfrac{\mathcal{U}}{6}$ is given as
\begin{align*}
f_{\mathcal{V}_{5,1}}(x)
=
%f_{\cos\frac{\mathcal{U}}{6}}(x)
%=
\frac{6}{\pi\sqrt{1-x^2}} \mathbbm{1}_{\left(\sfrac{\sqrt{3}}{2},1\right]}(x),
\end{align*}
and
\begin{align*}
c_{\mathcal{J}_{5,1}} := \cos\left(\frac{2\pi \mathcal{J}_{5,1}}{6}\right), \qquad
d_{\mathcal{J}_{5,1}} := \sin\left(\frac{2\pi \mathcal{J}_{5,1}}{6}\right).
\end{align*}
The final step is to compute the PDF of $\varphi_{\mathcal{J}_{5,1}}(\mathcal{V}_{5,1})$ using the law of total probability
\begin{align*}
f_{\varphi_{\mathcal{J}_{5,1}}(\mathcal{V}_{5,1})}(x) 
= 
\frac{1}{6} \sum_{j=0}^{5} f_{\varphi_{\mathcal{J}_{5,1}}(\mathcal{V}_{5,1})|\mathcal{J}_{5,1}=j}(x),
\end{align*}
and the transformation formula 
\begin{align*}
f_{\varphi_{\mathcal{J}_{5,1}}(\mathcal{V}_{5,1})|\mathcal{J}_{5,1}=j}(x) 
= 
f_{\mathcal{V}_{5,1}}\left(\varphi_j^{-1}(x)\right) \cdot \left| \frac{\diff}{\diff x} \varphi_j^{-1}(x) \right|.
\end{align*}
Here, for every fixed value $j\in\lbrace 0,\ldots,5\rbrace$, the function $\varphi_j$ has at most one local extremum, see Figure \ref{fig:varphi_j}.
Table \ref{tab:example} gives an overview of all positions and values of extrema of $\varphi_j$ on $\left[\sfrac{\sqrt{3}}{2},1\right)$. 
It is not hard to see that $\varphi_0$ is monotonically increasing, and, therefore, has no local extrema. 
Therefore, cells in Table \ref{tab:example} for $j=0$ are left blank.
It is noteworthy that we have two pairs $(j_1,j_2)\in\lbrace (1,5),~ (2,4) \rbrace$, such that $j_1+j_2 = 6$ with $\varphi_{j_1}(\sfrac{\sqrt{3}}{2}) = \varphi_{j_2}(\sfrac{\sqrt{3}}{2})$ and $\varphi_{j_1}(1) = \varphi_{j_2}(1)$.
$\varphi_{j_1}$ has a local minimum whereas $\varphi_{j_2}$ has a local maximum in both cases.

\begin{figure}[H]
\begin{center}
\includegraphics[scale=0.5]{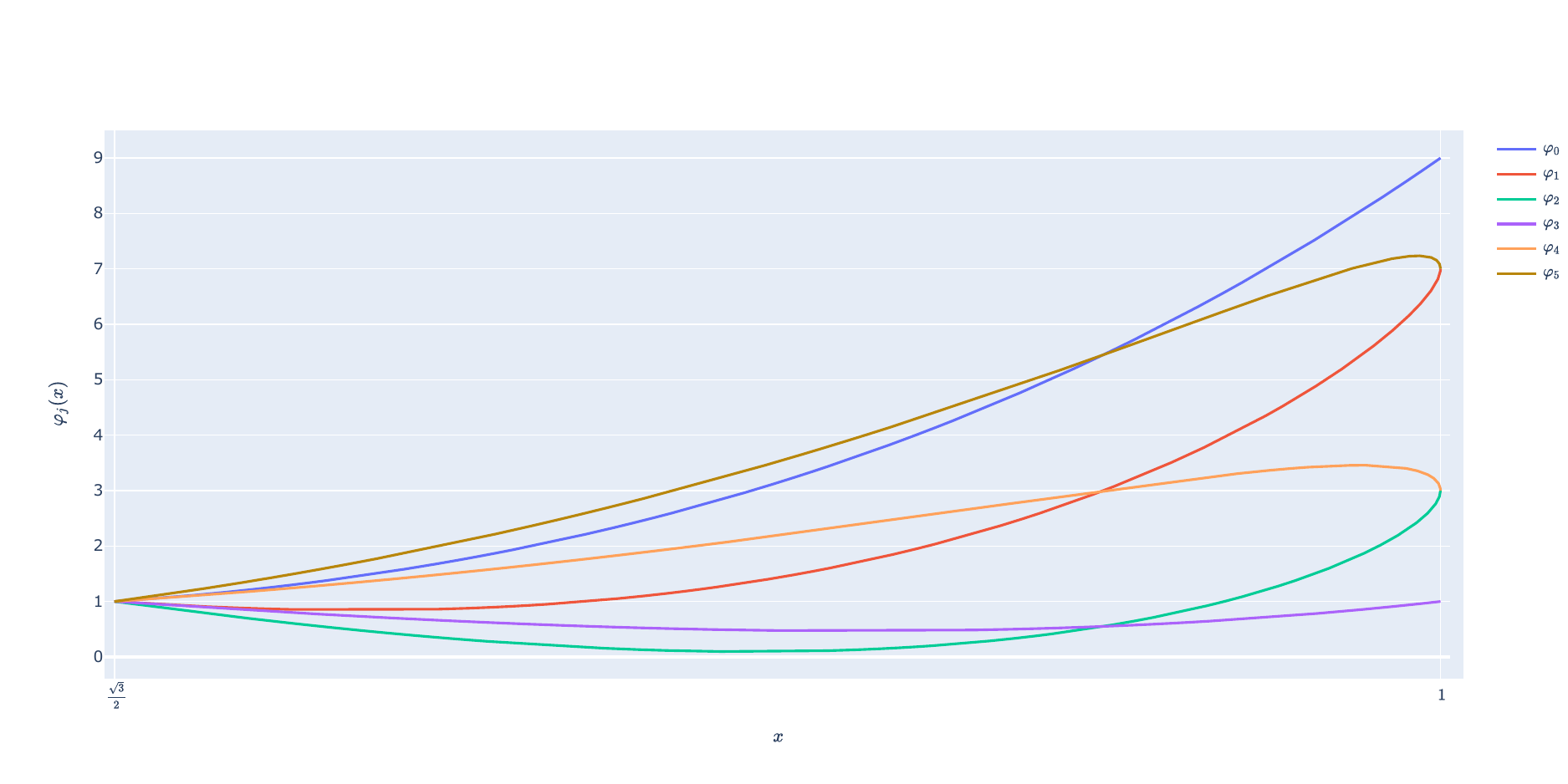}
\caption{The family of functions $\lbrace \varphi_j \rbrace_{j\in\lbrace 0,\ldots,5\rbrace}$ on the interval $\left[\sfrac{\sqrt{3}}{2},1\right)$.}\label{fig:varphi_j}
\end{center}
\end{figure}

Another observation is concerning the shape of the numerical PDF of $\varphi_j(\mathcal{V}_{5,1})$.
Although the distribution described in \eqref{formula:V_polynom} appears to be a tedious polynomial-like function, our numerical results suggest that these distributions follow a much simpler distribution as one single or a sum of two truncated arcsine distributions. 

Finally, we can invert $\varphi_j$ piece-wise, compute the transformed probability density function and sum them up on the corresponding intervals.

Using Python software \cite{PythonBille24}, the numerical computation of $\varphi_j^{-1}$ and $\frac{\diff}{\diff x}\varphi_j(x)$ leads to the function illustrated as the red line in Figure \ref{fig:N_51_pdf}.
The blue bars represent the normalized histogram of $N=10^{6}$ i.i.d. samples of the random eigenvalue $\Lambda_{5,1}^*$, simulated straightforwardly according to the representation given in Theorem \ref{thm:RV_pq}.

\begin{table}[h]
\centering
\begin{tabular}{c||c|c|c|c|c|c}
\hline
$j$ & $0$ & $1$ & $2$ & $3$ & $4$ & $5$ \\ \hline
$x_*$ & -- & 0.890885 & 0.930533 & 0.941337 & 0.991806 & 0.997728 \\ \hline
$\varphi_j\left(x_*\right)$ & -- & 0.843372 & 0.094556 & 0.467574 & 3.45796 & 7.23622 \\ \hline
\end{tabular}
\caption{Positions and values of extrema of $\varphi_j$ on $\left[\sfrac{\sqrt{3}}{2} ,1\right)$.}
\label{tab:example}
\end{table}

\begin{figure}[H]
\begin{center}
\includegraphics[scale=0.5]{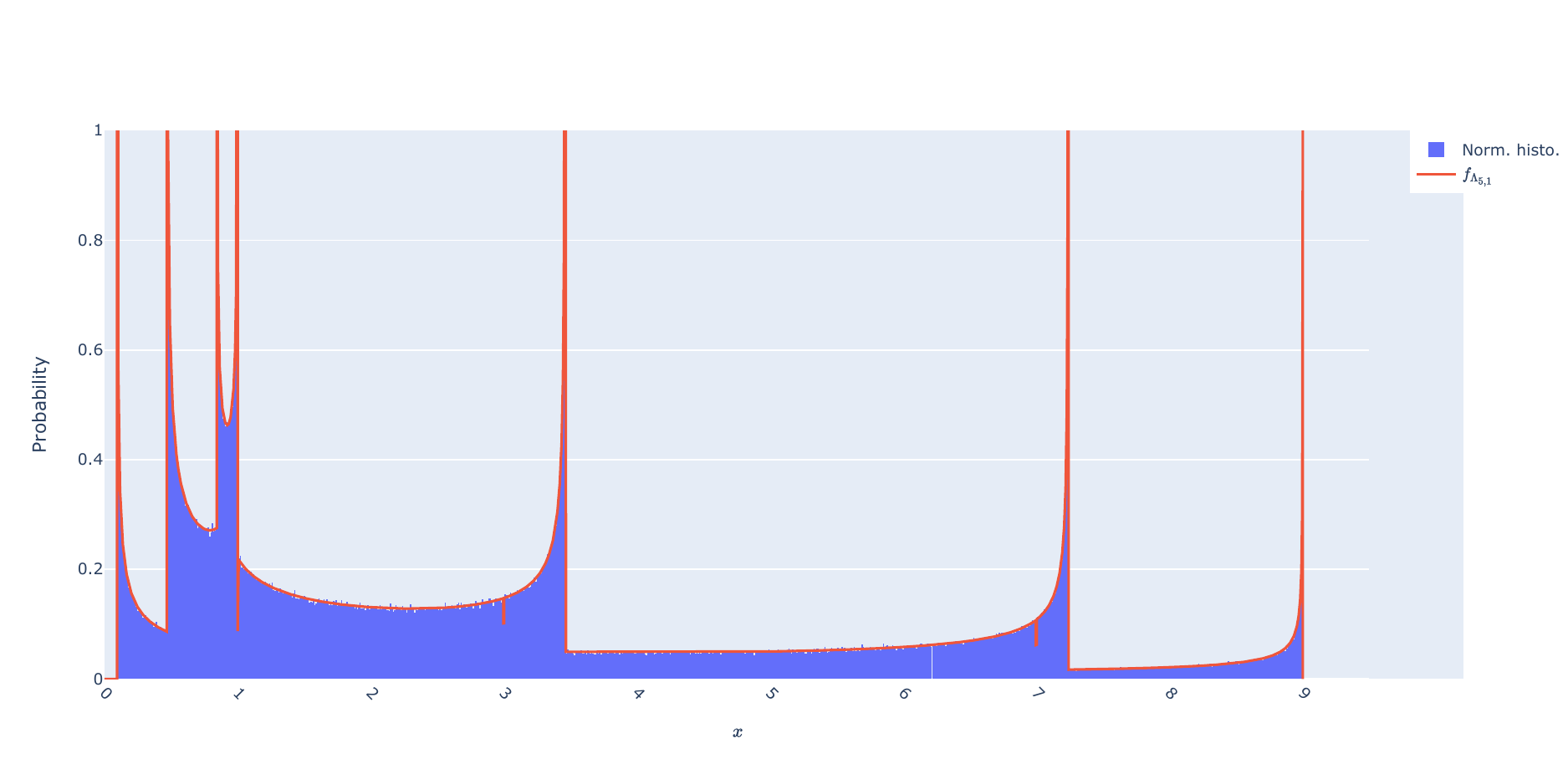}
\caption{Normalized histogram of $10^{6}$ random eigenvalues $\Lambda_{5,1}^*$ (simulated by \eqref{theorem1:1}) (blue) and the numerical calculation of density $ f_{\Lambda_{5,1}^*}$ (red).}\label{fig:N_51_pdf}
\end{center} 
\end{figure}

Note that in Figure \ref{fig:N_51_pdf}, the drops in the red curve at $x\in\lbrace 1,3,7 \rbrace$ are attributable to numerical inaccuracy.  

\end{example}

\section{Summary} \label{sect:summary}

This paper explores the spectrum of dual infinite $(p,q)$--nanotubes by investigating the distribution of their random eigenvalues $\Lambda_{p,q}^*$. 
The study derives a general representation for these random eigenvalues and provides specific formulas for zigzag ($q = 0$) and armchair ($p = q$) nanotubes.

For general dual infinite $(p,q)$--nanotubes, the paper employs a formula from \cite{Cotfas00} for the number of closed paths with $k$ steps. 
Based on that, the random eigenvalues $\Lambda_{p,q}^*$ are shown to have a specific distribution involving independent uniformly distributed random variables $\mathcal{U}$ and $\mathcal{J}_{p,q}$. 
This distribution simplifies to more practical formulas in zigzag and armchair cases.

The paper also analyzes the asymptotic behavior of $\Lambda_{p,q}^*$ as $p+q\to\infty$ such that $\frac{p}{p+q}\to c\in[0,1]$, showing convergence to the random eigenvalues of the triangular lattice. 
Additionally, it provides three distinct representations for the moments of $\Lambda_{p,q}^*$. 

The MGF of $\Lambda_{p,q}^*$ can be expressed as an integral involving a discrete form of the modified Bessel function, aligning with previous results for the hexagonal and triangular lattices.
As a direct consequence, this yields a novel identity of the integral over the modified Bessel function.

Finally, explicit formulas for the PDFs of $\Lambda_{p,q}^*$ are given for zigzag and armchair nanotubes. 
For chiral nanotubes, the paper proposes a numerical algorithm to compute their PDFs and demonstrates its application on the dual infinite $(5, 1)$--nanotube.

\subsection*{Acknowledgement}

The authors express their gratitude to Vsevolod L. Chernyshev and Satoshi Kuriki for valuable discussions and helpful suggestions regarding appropriate references.

\newpage

\bibliography{literature}{}
\bibliographystyle{abbrv}

\newpage 
\appendix
\section*{Appendix}

\begin{proposition}[Proof of  representation \eqref{eq:moments2}]
For $(p,q)\in\mathbf{N}$ and $k\in\N$, it holds
\begin{align*}
\mu_k\left(\Lambda_{p,q}^*\right)
&=
\sum_{k_1+k_2+k_3=k} 
\binom{k}{k_1,k_2,k_3}^2 
\left( 1 + 
2\sum_{j=1}^{\left\lfloor \frac{k_1}{p+q} \right\rfloor} 
\frac{\binom{2k_1-jq}{k_1+jp} \binom{k}{k_1-jq}}{\binom{k}{k_1} \binom{2(k-k_1)}{k-k_1}} 
\right).
\end{align*}
\end{proposition}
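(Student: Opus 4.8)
The plan is to start from the combinatorial formula \eqref{eq:moments1} already established in Proposition \ref{prop:moments1}, namely
\begin{align*}
\mu_k\left(\Lambda_{p,q}^*\right)
=
\sum_{j\in\Z} \sum_{k_1+k_2+k_3=k} \sum_{k_1'+k_2'+k_3' = k} \binom{k}{k_1,k_2,k_3} \binom{k}{k_1',k_2',k_3'}
\mathbbm{1} \begin{Bmatrix*}[l] k_1' = k_1 + jp\\ k_2' = k_2 + jq \\ k_3' = k_3 - j(p+q) \end{Bmatrix*},
\end{align*}
and to resum it by eliminating the primed indices. Once the constraints $k_i' = k_i + j(\text{shift})$ are used to substitute, only the triple $(k_1,k_2,k_3)$ with $k_1+k_2+k_3=k$ and the integer $j$ remain free, subject to $k_i'\ge 0$, i.e. $k_1+jp\ge 0$, $k_2+jq\ge 0$, $k_3-j(p+q)\ge 0$. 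The key observation is that for $j=0$ the inner double sum collapses to $\sum \binom{k}{k_1,k_2,k_3}^2$, which produces the leading ``$1$'' in the claimed formula; the terms with $j\ne 0$ must be matched to the correction sum. By the $(p,q)\leftrightarrow(q,p)$ symmetry and the symmetry $j\leftrightarrow -j$ (which swaps the roles of $k_3$ against $k_1,k_2$), the $j<0$ contributions mirror the $j>0$ contributions, which is the source of the factor $2$ and of the restriction to $j\ge 1$; I would make this reflection explicit by checking that replacing $j$ by $-j$ and relabelling $(k_1,k_2,k_3)\mapsto(k_1',k_2',k_3')$ maps the constraint system to itself.

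The main computational step is then: fix $j\ge 1$ and fix $k_1$ (the surviving ``free'' index after one more reduction), and show that
\begin{align*}
\sum \binom{k}{k_1,k_2,k_3}\binom{k}{k_1+jp,\,k_2-jq,\,k_3-j(p+q)+\dots}
\end{align*}
— summed over the remaining freedom — equals $\binom{k}{k_1,k_2,k_3}^2$ times $\binom{2k_1-jq}{k_1+jp}\binom{k}{k_1-jq}\big/\big(\binom{k}{k_1}\binom{2(k-k_1)}{k-k_1}\big)$. The natural tool here is the Vandermonde–Chu convolution identity: a sum of products of two multinomial (equivalently binomial) coefficients over one index, with the upper arguments adding up to a constant, telescopes into a single binomial coefficient. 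Concretely I expect to rewrite each multinomial coefficient as a product of two binomials ($\binom{k}{k_1}\binom{k-k_1}{k_2}$ and similarly for the primed one), pull out the common factors, and be left with a one-dimensional Vandermonde sum in the index $k_2$ (with $k_3$ determined), whose evaluation gives the $\binom{2k_1-jq}{k_1+jp}$ factor. The bookkeeping of which shifted index plays which role, and getting the upper/lower entries of the binomials to line up with $2k_1-jq$ and $k_1+jp$, is where care is needed.

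The remaining routine checks are: (i) the summation range for $j$ is exactly $1\le j\le\lfloor k_1/(p+q)\rfloor$, which comes from simultaneously imposing $k_3 - j(p+q)\ge 0$ together with $k_3\le k_1$ in the relevant regime — more precisely, after the reduction the binding constraint becomes $jq\le k_1$ and $j(p+q)\le 2k_1-k$ or similar, and one must verify it collapses to $j\le\lfloor k_1/(p+q)\rfloor$; (ii) the denominator $\binom{k}{k_1}\binom{2(k-k_1)}{k-k_1}$ is nonzero in that range so the division is legitimate; (iii) the $k=0$ (or empty-sum) edge cases behave. I anticipate the principal obstacle to be step (i)–(ii) combined with correctly tracking the index shifts through the double-to-single-sum reduction: it is easy to get an off-by-$jq$ or a sign wrong in the arguments of the binomials, so I would verify the final identity against the closed-form $(5,0)$ specialization and the hypergeometric ${}_6F_5$ expression quoted in Remark \ref{remark:number of closed walks} as a consistency check, and also confirm that as $p+q\to\infty$ the correction sum is empty, recovering $\mu_k(\mathcal{T})$ as in \eqref{eq:moment_convergence}.
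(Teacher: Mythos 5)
Your strategy---resumming \eqref{eq:moments1} from Proposition \ref{prop:moments1} by eliminating the primed indices, extracting the $j=0$ term as the leading ``$1$'', pairing $j$ with $-j$ to get the factor $2$, and collapsing the remaining double sum by Chu--Vandermonde---is a genuinely different route from the paper's, which instead starts from the lattice walk-count formula of Di Crescenzo et al.\ \cite[Proposition 2]{DiCre}, evaluates the resulting ${}_2F_1(\,\cdot\,;1)$ by Gauss's theorem, and reindexes via $k_1=j(p+q)+z$. Your route is more self-contained, and the reflection argument for the factor $2$ is correct. However, the central computational step as you state it is false. Fixing $j\ge 1$ and $k_1$ and convolving over $k_2+k_3=k-k_1$ gives, by Vandermonde,
\begin{align*}
\sum_{k_2+k_3=k-k_1}\binom{k}{k_1,k_2,k_3}\binom{k}{k_1+jp,\,k_2+jq,\,k_3-j(p+q)}
=\binom{k}{k_1}\binom{k}{k_1+jp}\binom{2(k-k_1)-jp}{k-k_1-j(p+q)},
\end{align*}
whereas the target formula assigns to the same $k_1$ the quantity $\binom{k}{k_1}\binom{2k_1-jq}{k_1+jp}\binom{k}{k_1-jq}$. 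These cannot agree term by term: the left-hand expression is supported on $k_1\le k-j(p+q)$, the right-hand one on $k_1\ge j(p+q)$. (Concretely, for $k=2$, $j=1$, $(p,q)=(1,0)$ the two sides distribute the common total $8$ over $k_1=0,1,2$ as $(6,2,0)$ versus $(0,4,4)$.)

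The missing idea is that the two expressions match only after a $j$-dependent relabelling: you must reflect $k_1\mapsto k-k_1$ \emph{and} use the invariance of the full sum under permuting the first two slots of the multinomial coefficients (which interchanges the roles of $p$ and $q$ in the convolved expression; the sum over $(k_1,k_2,k_3)$ is symmetric under $k_1\leftrightarrow k_2$, so the $j$-th correction for $(p,q)$ equals that for $(q,p)$). Only after both steps do the binomials line up as $\binom{2k_1-jq}{k_1+jp}\binom{k}{k_1-jq}$ with the range $1\le j\le\lfloor k_1/(p+q)\rfloor$. Without this, your Vandermonde step yields a correct but different-looking formula with $p$ and $q$ interchanged in the correction term, and your proposed consistency checks (the $(5,0)$ specialization and the $p+q\to\infty$ limit, where the correction vanishes anyway) would not localize the error. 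So the proof is completable along your lines, but the per-$(j,k_1)$ identity you commit to is not the right intermediate statement.
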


\begin{proof}
Di Crescenzo et al. presented in \cite[Proposition 3.2]{DiCre} formulas 
for the number of all possible walks between two given vertices of a hexagonal (triangular) lattice. 
Since we study closed paths we choose the starting vertex as the origin, and in a $(p,q)$--nanotube this starting vertex coincides with every vertex with the coordinates $j(p,q)$ for all $j\in\Z$. 
For the same reason, one sums the triple integrals over all integer values of $j$  in \eqref{eq:fCotfas}. 
Since any vertex $v$ of a $(p,q)$--nanotube is identified with $v+j\sqrt{3}(p,q)^\intercal$ for every $j\in\Z$, we count the number of closed walks on $\mathcal{H}$ from $v$ to $v+j\sqrt{3}(p,q)^\intercal$ for all $j\in\Z$.
Due to the symmetry properties of the hexagonal lattice shown in \cite[Corollary 1]{DiCre}, we need to consider $0\leq q \leq p$ and $j\geq 0$ only.  
Using \cite[Proposition 2]{DiCre} one gets

\begin{align*}
\mu_k\left(\Lambda_{p,q}^*\right)
&=
\sum_{j\in\Z} \mathcal{P}_{jp,jq}^{2k}(\mathcal{H})
=
\mu_k\left(T\right)
+
2\sum_{j=1}^\infty 
\sum_{z=jq}^{k-jp}
\binom{k}{z}\binom{k}{jp+z}
\binom{z}{jq}{}_2F_1
\left(
\begin{matrix}
-jp-z,jq-z\\
1+jq
\end{matrix}
;1\right) \\
&=
\mu_k\left(T\right) + 
2 \sum_{j=1}^{\left\lfloor \frac{k}{p+q} \right \rfloor} \sum_{z=jq}^{ k-jp} \binom{k}{z}\binom{k}{jp+z}\binom{z}{jq}\frac{(jq)!(2z+jp)!}{z!(z+j(p+q))!},
\end{align*}

where $\mathcal{P}_{x,y}^{2k}(\mathcal{H})$ denotes in \cite{DiCre} the number of walks of length $2k$ from the origin to a vertex $(x,y)$ on $\mathcal{H}$. 
In particular,  it holds 
$\mathcal{P}_{0,0}^{2k}(\mathcal{H})
=
\mu_{2k}\left(H\right)
=
\mu_k\left(T\right)$. 
Note that all terms equal 0 in the infinite sum over $z$ (in the first line above) if the upper score in one of the three binomial coefficients is smaller than the lower one. 
This leads to the condition that $z\leq \left\lfloor \frac{k}{p+q} \right\rfloor$.
Furthermore, it holds

\begin{align*}
&\sum_{j=1}^{\left\lfloor \frac{k}{p+q} \right \rfloor} 
\sum_{z=jq}^{k-jp} 
\binom{k}{z}
\binom{k}{jp+z}
\binom{z}{jq}
\frac{(jq)!(2z+jp)!}{z!(z+j(p+q))!} \\
=&
\sum_{j=1}^{\left\lfloor \frac{k}{p+q} \right\rfloor}
\sum_{z=jq}^{k-jp} 
\frac{(k!)^2(2z+jp)!}{z! (k-z)!(jp+z)!(k-jp-z)!(z-jq)!(z+j(p+q))!}\\
=&
\sum_{j=1}^{\left\lfloor \frac{k}{p+q} \right\rfloor}
\sum_{z=0}^{k-j(p+q)} 
\frac{(k!)^2(jp+2(z+jq))!}{(z+jq)! (k-z-jq)!(j(p+q)+z)! z!(z+j(p+2q))!}\cdot 
\frac{1}{(k-j(p+q)-z)!}\\
=&
\sum_{j=1}^{\left\lfloor \frac{k}{p+q} \right\rfloor}
\sum_{z=0}^{k-j(p+q)} 
\frac{(jp+2(z+jq))! (k-j(p+q)-z)! (k!)^2}{(z+jq)! (k-z-jq)! z!(z+j(p+2q))! (j(p+q)+z)!(2(k-j(p+q)-z))!}\\
&\hspace*{2cm} \times\binom{2(k-j(p+q)-z)}{k-j(p+q)-z} \\
=&
\sum_{j=1}^{\left\lfloor \frac{k}{p+q} \right\rfloor}
\sum_{z=0}^{k-j(p+q)} 
\frac{(jp+2(z+jq))!(k-k_1)!^3 k_1!}{(z+jq)!(k-z-jq)!z!(z+j(p+2q))!(2(k-k_1))!} 
\sum_{k_2=0}^{k-k_1} 
\binom{k}{k_1,k_2,k-k_1-k_2}^2\\
=&
\sum_{j=1}^{\left \lfloor \frac{k}{p+q} \right \rfloor} 
\sum_{k_1=j(p+q)}^{k}
\sum_{k_2=0}^{k-k_1} 
\frac{(2k_1-jq)!(k-k_1)!^3 k_1!}{(k_1-jq)!(k-k_1+jq)!(k_1-j(p+q))!(k_1+jp)!(2(k-k_1))!} 
\binom{k}{k_1,k_2,k-k_1-k_2}^2\\
=&
\sum_{k_1+k_2+k_3=k}
\sum_{j=1}^{\left\lfloor \frac{k_1}{p+q} \right\rfloor} 
\frac{k!}{k!}\cdot 
\frac{(k-k_1)!^3 k_1! (2k_1-jq)!}{(k_1-jq)! (k-k_1+jq)!(2(k-k_1))! (k_1-j(p+q))! (k_1+jp)!}
\binom{k}{k_1,k_2,k_3}^2 \\
=&
\sum_{k_1+k_2+k_3=k}
\sum_{j=1}^{\left\lfloor\frac{k_1}{p+q}\right\rfloor} 
\frac{k!(k-k_1)!^3 k_1!}{k! (k_1-jq)!(k-k_1+jq)!(2(k-k_1))!}
\binom{2k_1-jq}{k_1+jp}
\binom{k}{k_1,k_2,k_3}^2\\
=&
\sum_{k_1+k_2+k_3=k}
\binom{k}{k_1,k_2,k_3}^2
\sum_{j=1}^{\left\lfloor\frac{k_1}{p+q}\right\rfloor} 
\frac{\binom{2k_1-jq}{k_1+jp}\binom{k}{k_1-jq}}{\binom{k}{k_1}\binom{2(k-k_1)}{k-k_1}},
\end{align*}
where we used the following notation: $k_1:=j(p+q)+z$.
\end{proof}

\begin{proposition}[Proof of representation \eqref{eq:moments3}]
It holds
\begin{equation*}
\mu_k\left( \Lambda^{*}_{p,q} \right) 
= 
\sum_{k_1+\ldots+k_7=k}
\binom{k}{k_1,\ldots,k_7} 3^{k_1} 
\mathbbm{1}
\left\{
\begin{array}{l}
pk-k_2-k_4+k_5+k_7 \equiv 0 \text{ mod }p\\
p\left(-k_2 + k_3 + k_5 - k_6\right) = q\left( k_2 + k_4 - k_5 - k_7 \right)
\end{array}
\right\},
\quad k\in\N_0.
\end{equation*}
\end{proposition}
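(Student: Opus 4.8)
The plan is to start from the integral representation \eqref{eq:fCotfas} and expand $\left|e^{ix}+e^{iy}+e^{iz}\right|^{2k}$ differently from the proof of Proposition \ref{prop:moments1}. Instead of writing $\left|e^{ix}+e^{iy}+e^{iz}\right|^{2k} = \left(e^{ix}+e^{iy}+e^{iz}\right)^k\overline{\left(e^{ix}+e^{iy}+e^{iz}\right)}^k$ and using the multinomial theorem twice, I would first compute $\left|e^{ix}+e^{iy}+e^{iz}\right|^{2} = 3 + 2\cos(x-y) + 2\cos(x-z) + 2\cos(y-z)$, so that the integrand becomes a single power $\bigl(3 + 2\cos(x-y) + 2\cos(x-z) + 2\cos(y-z)\bigr)^k$. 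Writing each cosine in exponential form $2\cos\theta = e^{i\theta}+e^{-i\theta}$ gives a sum of seven exponential terms: one constant term contributing $3$, and six terms $e^{\pm i(x-y)}, e^{\pm i(x-z)}, e^{\pm i(y-z)}$. Applying the multinomial theorem with seven groups and exponents $k_1,\dots,k_7$ summing to $k$ yields $\binom{k}{k_1,\dots,k_7}3^{k_1}$ times a product of exponentials, where $k_1$ counts the constant term.

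Next I would collect the exponent of $e^{i}$ appearing in the combined phase from the six cosine-derived terms together with the Poisson/Dirac factor $e^{ij(p(x-z)+q(y-z))}$ already present in \eqref{eq:fCotfas}. After assigning, say, $k_2,k_3$ to $e^{\pm i(x-y)}$, $k_4,k_5$ to $e^{\pm i(x-z)}$, $k_6,k_7$ to $e^{\pm i(y-z)}$ (the exact bookkeeping of which sign goes with which index is the part I would fix carefully to match the stated congruences), the total phase is linear in $x$, $y$, $z$ with integer coefficients depending on $k_2,\dots,k_7$ and on $p,q,j$. Integrating $\frac{1}{(2\pi)^3}\int_{-\pi}^\pi\int_{-\pi}^\pi\int_{-\pi}^\pi e^{i(\alpha x + \beta y + \gamma z)}\diff x\diff y\diff z = \mathbbm{1}\{\alpha=0\}\mathbbm{1}\{\beta=0\}\mathbbm{1}\{\gamma=0\}$ kills all terms except those where each of the three coefficients vanishes. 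Summing over $j\in\Z$ then turns the $z$-coefficient condition (which involves $j(p+q)$) into a divisibility/existence condition: there exists $j\in\Z$ solving it iff the relevant linear combination of the $k_i$ is divisible by $p+q$ — and after eliminating $j$ using that relation, the two remaining conditions on the $x$- and $y$-coefficients collapse to the congruence mod $p$ and the exact linear equation $p(-k_2+k_3+k_5-k_6) = q(k_2+k_4-k_5-k_7)$ displayed in \eqref{eq:moments3}.

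The main obstacle I anticipate is purely organizational rather than conceptual: getting the sign conventions and index-to-term assignment exactly right so that the three vanishing conditions, after eliminating the summation variable $j$, reproduce \emph{precisely} the two indicator conditions in the statement — including the somewhat opaque $pk$ term in $pk - k_2 - k_4 + k_5 + k_7 \equiv 0 \bmod p$, which must arise from rewriting one of the integer conditions modulo $p$ after substituting the value of $j$ forced by the $z$-equation. I would handle this by writing the $z$-coefficient as $\gamma = -(k_4+k_5) - (k_6+k_7)$-type expression plus $-j(p+q)$, wait — more carefully, collecting the coefficient of $z$ from $e^{\mp i z}$ terms and from $e^{-ij(p+q)z}$, setting it to zero to solve for $j$, then substituting into the $x$- and $y$-conditions and simplifying modulo $p$. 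A sanity check I would run is the limit $p+q\to\infty$: the only surviving $j$ is $j=0$, the conditions reduce to $k_2=k_3+k_5-k_6$-type balance equations that should recover $\sum_{k_1+k_2+k_3=k}\binom{k}{k_1,k_2,k_3}^2$ after re-expanding $3^{k_1}=(1+1+1)^{k_1}$, matching \eqref{eq:moment_convergence}; and I would verify small cases $k=1,2$ against \eqref{eq:moments1} numerically.
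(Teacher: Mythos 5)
Your proposal is correct and rests on the same key idea as the paper's own proof: the seven--term multinomial expansion of $\left|e^{ix}+e^{iy}+e^{iz}\right|^{2k}=\left(3+2\cos(x-y)+2\cos(x-z)+2\cos(y-z)\right)^{k}$, with $k_1$ counting the constant term and producing the factor $3^{k_1}$. The only difference lies in how the surviving terms are extracted. The paper first integrates out one angle, passes to contour variables $z=e^{i\theta_1}$, $w=e^{-i\theta_2}$ on the unit circle, and evaluates a residue at the origin followed by the extraction of the coefficient of $w^{-1}$, each step imposing one linear condition on $(k_2,\dots,k_7,j)$. You instead keep all three real angles and invoke the orthogonality $\frac{1}{2\pi}\int_{-\pi}^{\pi}e^{in\theta}\,\diff\theta=\mathbbm{1}\{n=0\}$ separately in each variable; since the three resulting coefficients sum to zero, this again yields exactly two independent linear conditions, and eliminating $j$ reproduces the two indicators of \eqref{eq:moments3}. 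Concretely, with the assignment $k_2\mapsto e^{i(x-y)}$, $k_3\mapsto e^{i(y-z)}$, $k_4\mapsto e^{i(x-z)}$ and $k_5,k_6,k_7$ their respective conjugates (which is precisely the correspondence hidden in the paper's monomials $z^2w,\ zw^2,\ (zw)^2,\ w,\ z,\ 1$ after division by $(zw)^k$), the coefficient-of-$x$ equation forces $jp=-(k_2+k_4-k_5-k_7)$, whence the congruence $pk-k_2-k_4+k_5+k_7\equiv 0 \bmod p$, and substituting this $j$ into the coefficient-of-$y$ equation gives $p(-k_2+k_3+k_5-k_6)=q(k_2+k_4-k_5-k_7)$, exactly as stated. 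The two routes are computationally identical --- a residue at the origin is nothing but a Fourier coefficient --- so yours is a more elementary packaging that avoids the change of variables to contour integrals; the only genuine work left in your plan, as you correctly anticipate, is pinning down this index-to-exponential assignment, and your proposed sanity checks against \eqref{eq:moments1} and \eqref{eq:moment_convergence} are the right way to confirm it.
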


\begin{proof}
It holds
\begin{align*}
\mu_k\left( \Lambda^{*}_{p,q} \right)
&=\frac{1}{(2\pi)^3} \sum_{j \in \Z} \int_{-\pi}^\pi \int_{-\pi}^\pi \int_{-\pi}^\pi \left| e^{i\varphi_0} + e^{i\varphi_1} + e^{i\varphi_2} \right|^{2k} e^{ij\left(p(\varphi_0-\varphi_2)+q(\varphi_1-\varphi_2) \right)} \diff \varphi_0 \diff \varphi_1 \diff \varphi_2 \\
&= \frac{1}{(2\pi)^3} 
\int_{-\pi}^\pi \int_{-\pi}^\pi \int_{-\pi}^\pi 
\left| e^{i(\varphi_0 -\varphi_1)} + 1 + e^{i(\varphi_2-\varphi_1)} \right|^{2k} \sum_{j=-k}^k 
e^{ij\left(p(\varphi_0-\varphi_2)+q(\varphi_1-\varphi_2) \right)} 
\diff \varphi_0 \diff \varphi_1 \diff \varphi_2,
\end{align*}
since the triple integral in the sum vanishes for all $\left| j\right|>\lfloor \frac{k}{5} \rfloor$.
Next, apply the change of variables $\theta_1 := \varphi_0 -\varphi_1,\quad \theta_2 := \varphi_2-\varphi_1$.
Integrating over $\varphi_1$, it follows
\begin{align*}
\mu_k\left( \Lambda^{*}_{p,q} \right) &= \frac{1}{(2\pi)^2}  \int_{-\pi}^\pi \int_{-\pi}^\pi \left| e^{i\theta_1} + 1 + e^{i\theta_2} \right|^{2k} \sum_{j=-k}^k e^{ij\left(p(\theta_1-\theta_2) - q \theta_2  \right)} \diff \theta_1 \diff \theta_2.
\end{align*}
Use a second change of variables $z := e^{i\theta_1},\quad w:= e^{-i\theta_2}$,
with 
\begin{align*}
\diff z = ie^{i\theta_1} \diff \theta_1 = iz\diff \theta_1,\quad \quad
\diff w = -ie^{-i\theta_2} \diff \theta_2 = -iw\diff \theta_2, 
\diff \theta_1 \diff \theta_2 
=
\frac{1}{zw}\diff z \diff w. 
\end{align*}
Note that the first contour integral w.r.t. $z$ goes counterclockwise and the other contour clockwise.\\ 
Denote by $B_r(o)$ the complex circle with radius $r$, i.e., $B_r(0)=\lbrace re^{i\theta}­­ ~|~  \theta\in \left[0,2\pi\right] \rbrace$. 
Since $\left| z \right|^2 = z \overline{z}$ for all $z\in\C$, the substitution yields the following integral
\begin{align*}
\mu_k\left( \Lambda^{*}_{p,q} \right) 
&= 
\frac{1}{(2\pi)^2} \oint_{B_1(0)} \oint_{B_1(0)} \left( \left(z+1+\frac{1}{w}\right)\left(\frac{1}{z}+1 +w \right) \right)^k 
\frac{1}{zw}
\sum_{j=-k}^k \left(z^{p}w^{p+q}\right)^{j} \diff z\diff w\\
&= 
\frac{1}{(2\pi)^2} \oint_{B_1(0)} \oint_{B_1(0)} 
\frac{\left(w (z+1) + 1\right)^k \left(z (w + 1) +1 \right)^k}{(zw)^{k+1}} \sum_{j=-k}^k \left(z^{p}w^{p+q}\right)^{j}
\diff z\diff w \notag \\
&= 
\frac{1}{(2\pi)^2} \oint_{B_1(0)} 
\frac{1}{w^{k (p+q+1) + 1}} 
\oint_{B_1(0)} 
\underbrace{
\frac{(w(z+1)+1)^k (z(w+1)+1)^k}{z^{ k (p+1) + 1}}\cdot 
\sum_{j=0}^{2k}\left( z^p w^{p+q} \right)^j
}_{=:f(z)}  
\diff z \diff w. \notag
\end{align*}
The function $f$ has only one pole of order $k (p+1) + 1$ at $z = 0$. 
We need to compute its residue:
\begin{align*}
\text{res} f(0) 
&= 
\frac{1}{(k(p+1))!}
\lim_{z\rightarrow 0} 
\frac{\diff^{k(p+1)}}{\diff z^{k(p+1)}} 
\left\{ z^{k(p+1)+1}f(z)\right\} \\
&=  
\frac{1}{(k(p+1))!}
\lim_{z\rightarrow 0} 
\frac{\diff^{k(p+1)}}{\diff z^{k(p+1)}}  
(w (z+1) + 1)^k (z(w+1) + 1)^k \cdot 
\sum_{j=0}^{2k} 
\left(z^p w^{p+q}\right)^j \\
&= 
\frac{1}{(k(p+1))!}
\lim_{z\rightarrow 0}
\sum_{j=0}^{2k}  
\frac{\diff^{k(p+1)}}{\diff z^{k(p+1)}} 
\left((zw)^2 + z^2w + zw^2 + 3zw + w + z + 1 \right)^k 
\left( z^{p} w^{p+q} \right)^{j}\\
&= 
\frac{1}{(k(p+1))!}
\lim_{z\rightarrow 0} 
\sum_{j=0}^{2k} 
\frac{\diff^{k(p+1)}}{\diff z^{k(p+1)}} \\
& \hspace*{2.1cm}
\times\sum_{k_1+\ldots +k_7=k} \binom{k}{k_1,\ldots,k_7} 3^{k_1} w^{2 (k_3 + k_4) + k_1 + k_2 + k_5} z^{2 (k_2 + k_4) + k_1 + k_3 + k_6} \cdot
\left( z^{p} w^{p+q} \right)^{j}\\
&= 
\frac{1}{(k(p+1))!} 
\sum_{j=0}^{2k} 
\sum_{k_1+\ldots +k_7=k}   
\binom{k}{k_1,\ldots,k_7} 3^{k_1} w^{2(k_3+k_4)+k_1+k_2+k_5+(p+q)j}\\
& \hspace*{2.1cm}
\times\lim_{z\rightarrow 0} 
\frac{\diff^{k(p+1)}}{\diff z^{k(p+1)}} 
z^{2 (k_2 + k_4) + k_1 + k_3 + k_6 + pj} \\
&= 
\frac{1}{(k(p+1))!} 
\sum_{j=0}^{2k} 
\sum_{k_1+\ldots +k_7=k}   
\binom{k}{k_1,\ldots,k_7} 3^{k_1} 
w^{2(k_3+k_4)+k_1+k_2+k_5+(p+q)j} \\
&\hspace*{2.1cm} \times 
\frac{(2 (k_2 + k_4) + k_1 + k_3 + k_6 + pj)!}{(2 (k_2 + k_4) + k_1 + k_3 + k_6 + pj - k(p+1))!}
\lim_{z\rightarrow 0}  
z^{2 (k_2 + k_4) + k_1 + k_3 + k_6 + pj - k(p+1)}\\
&= 
\sum_{j=0}^{2k} 
\sum_{k_1+\ldots +k_7=k} 
\binom{k}{k_1,\ldots,k_7} 
\binom{2 (k_2 + k_4) + k_1 + k_3 + k_6 + pj}{k(p+1)} 
3^{k_1} 
w^{2(k_3+k_4)+k_1+k_2+k_5+(p+q)j}\\
&\hspace*{2.1cm}  \times 
\lim_{z\rightarrow 0} 
z^{2 (k_2 + k_4) + k_1 + k_3 + k_6 + pj - k(p+1)}.
\end{align*}
The terms in this sum vanish if
\begin{align*}
2 (k_2 + k_4) + k_1 + k_3 + k_6 + pj - k(p+1) \not= 0.
\end{align*}
Hence, we only care about the case when
\begin{align*}
&2 (k_2 + k_4) + k_1 + k_3 + k_6 + pj - k(p+1) = 0\\
\Leftrightarrow~~ 
&pj = k(p+1) - 2(k_2 + k_4) - k_1 - k_3 - k_6 = pk - k_2 - k_4 + k_5 + k_7
\end{align*}
holds. 
We can include this condition into the first sum if we change the order of summation:
\begin{align*}
\text{res} f(0) 
= 
&\sum_{k_1+\ldots +k_7=k}
\binom{k}{k_1,\ldots,k_7} 3^{k_1}\\
\times &\sum_{\substack{j=0, \\ j = \frac{pk-k_2-k_4+k_5+k_7}{p}}}^{2k} 
 \binom{2 (k_2 + k_4) + k_1 + k_3 + k_6 + pj}{k(p+1)}  w^{2(k_3+k_4)+k_1+k_2+k_5 + (p+q)j}\\
= 
\sum_{k_1+\ldots +k_7=k} 
&\binom{k}{k_1,\ldots,k_7} 
3^{k_1} 
w^{k(p+q+1) - k_2 + k_3 + k_5 - k_6 +\frac{q}{p}\left(k_5+k_7-k_2-k_4\right)} 
\mathbbm{1}\lbrace pk-k_2-k_4+k_5+k_7 \equiv 0\text{ mod } p \rbrace.
\end{align*}
Hence, we get for the inner integral of our moment formula
\begin{align*}
&\oint_{B_1(0)} f(z) \diff z 
= 
2\pi i \cdot \text{res} f(0) \\
= 
&2\pi i 
\sum_{k_1+\ldots +k_7=k} 
\binom{k}{k_1,\ldots,k_7} 3^{k_1} 
w^{k(p+q+1) - k_2 + k_3 + k_5 - k_6 +\frac{q}{p}\left(k_5+k_7-k_2-k_4\right)} 
\mathbbm{1}\lbrace pk-k_2-k_4+k_5+k_7 \equiv 0\text{ mod } p \rbrace.
\end{align*}
Then, it holds for the moments 
\begin{align*}
\mu_k\left( \Lambda^{*}_{p,q} \right) 
&= 
\frac{i}{2\pi} 
\sum_{k_1+\ldots +k_7=k} 
\binom{k}{k_1,\ldots,k_7} 
3^{k_1}   
\mathbbm{1}\lbrace pk-k_2-k_4+k_5+k_7 \equiv 0\text{ mod } p \rbrace \\
& \times 
\oint_{B_1(0)} 
w^{- k_2 + k_3 + k_5 - k_6 +\frac{q}{p}\left(k_5+k_7-k_2-k_4\right)-1} 
\diff w .
\end{align*}
Recall that it holds
\begin{align*}
\oint_{B_1(0)} 
w^{x} 
\diff w
=
\begin{cases}
-2\pi i,\quad &\text{ if }x=-1,\\
0, &\text{ else.}
\end{cases}
\end{align*}
Hence, we need to understand when the power of $w$ equals $-1$. 
It holds
\begin{align*}
&- k_2 + k_3 + k_5 - k_6 +\frac{q}{p}\left(k_5+k_7-k_2-k_4\right)-1=-1\\
\Leftrightarrow\quad 
&p\left( -k_2 + k_3 + k_5 - k_6\right) = q\left( k_2 + k_4 - k_5 - k_7\right),
\end{align*}
which yields the claim.
\end{proof}

\begin{proposition}[Proof of relation \eqref{eq:I0_integral}]\label{prop:I0_integral}
It holds
\begin{align*}
I_0\left(\sqrt{\alpha^2+\beta^2}\right) 
= 
\frac{1}{2\pi} \int_0^{2\pi} \exp\left(\alpha\cos \varphi + \beta \sin \varphi\right)
\diff \varphi,\quad \alpha,\beta\in\R.
\end{align*}
\end{proposition}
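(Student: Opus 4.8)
The plan is to reduce the claimed two-parameter identity to the one-parameter integral representation \eqref{eq:I0_integral_general} specialized to $n=0$, namely $I_0(x)=\frac{1}{\pi}\int_0^\pi e^{x\cos\varphi}\diff\varphi$. First I would dispose of the degenerate case $\alpha=\beta=0$: then $\sqrt{\alpha^2+\beta^2}=0$, and both sides equal $1$ since $I_0(0)=1$ and the integrand is identically $1$. For $(\alpha,\beta)\neq(0,0)$, set $\rho:=\sqrt{\alpha^2+\beta^2}>0$ and choose $\psi\in\R$ with $\alpha=\rho\cos\psi$ and $\beta=\rho\sin\psi$ (polar coordinates of $(\alpha,\beta)$). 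The angle-addition formula for cosine gives $\alpha\cos\varphi+\beta\sin\varphi=\rho(\cos\psi\cos\varphi+\sin\psi\sin\varphi)=\rho\cos(\varphi-\psi)$, so the right-hand side of the claim becomes $\frac{1}{2\pi}\int_0^{2\pi}e^{\rho\cos(\varphi-\psi)}\diff\varphi$.

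Next I would use that $\varphi\mapsto e^{\rho\cos(\varphi-\psi)}$ is $2\pi$-periodic, so its integral over any interval of length $2\pi$ is unchanged; performing the substitution $\varphi\mapsto\varphi+\psi$ (equivalently, translating the interval of integration by $\psi$) yields $\frac{1}{2\pi}\int_0^{2\pi}e^{\rho\cos\varphi}\diff\varphi$. Finally, splitting $[0,2\pi]$ at $\pi$ and applying the substitution $\varphi\mapsto 2\pi-\varphi$ on $[\pi,2\pi]$, together with $\cos(2\pi-\varphi)=\cos\varphi$, gives $\int_0^{2\pi}e^{\rho\cos\varphi}\diff\varphi=2\int_0^{\pi}e^{\rho\cos\varphi}\diff\varphi$. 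Hence the right-hand side equals $\frac{1}{\pi}\int_0^{\pi}e^{\rho\cos\varphi}\diff\varphi$, which is precisely $I_0(\rho)=I_0\!\left(\sqrt{\alpha^2+\beta^2}\right)$ by \eqref{eq:I0_integral_general}.

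There is no substantial obstacle here; the argument is a short chain of elementary steps — angle addition, a periodic shift of the integration interval, and a reflection symmetry of the cosine. The only points deserving a word of care are the choice of the polar angle $\psi$, which is why the case $(\alpha,\beta)=(0,0)$ is treated first, and the justification of the interval translation, which follows immediately from $2\pi$-periodicity of the integrand.
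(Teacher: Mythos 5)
Your proof is correct and follows essentially the same route as the paper's: collapse $\alpha\cos\varphi+\beta\sin\varphi$ into a single cosine via a phase shift, use $2\pi$-periodicity to translate the integration interval, fold $[0,2\pi]$ onto $[0,\pi]$ by the reflection symmetry of cosine, and invoke the representation \eqref{eq:I0_integral_general} with $n=0$. Your choice of the polar angle $\psi$ (rather than the paper's $\arctan(\beta/\alpha)$) and the separate treatment of $(\alpha,\beta)=(0,0)$ are minor refinements that avoid the degenerate cases $\alpha=0$ and $\alpha<0$, but the argument is the same.
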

\begin{proof}
Starting from the right-hand side of the above equation, we see
\begin{align*}
\frac{1}{2\pi} \int_0^{2\pi} \exp\left(\alpha\cos \varphi + \beta \sin \varphi\right)
\diff \varphi
&=
\frac{1}{2\pi} \int_0^{2\pi} \exp\left(\alpha\cos\left( \varphi + \arctan\left( \frac{\beta}{\alpha}\right) \right)+ \beta \sin \left(\varphi + \arctan\left(\frac{\beta}{\alpha}\right)\right)\right)
\diff \varphi\\
&=
\frac{1}{2\pi}\int_0^{2\pi} \exp\left(\cos(\varphi)\sqrt{\alpha^2+\beta^2}\right) \diff \varphi\\
&=
2\cdot \frac{1}{2\pi}\int_0^{\pi} \exp\left(\cos(\varphi)\sqrt{\alpha^2+\beta^2}\right)\diff \varphi\\
&=
\frac{1}{\pi}\int_0^{\pi} \exp\left(\cos(\varphi)\sqrt{\alpha^2+\beta^2}\right)\diff \varphi
=
I_0\left(\sqrt{\alpha^2 + \beta^2}\right),
\end{align*}
since 
\begin{align*}
&\alpha \cos\left(\varphi + \arctan\left(\frac{\beta}{\alpha}\right)\right)
=
\alpha\cos(\varphi)\underbrace{\cos\left(\arctan\left(\frac{\beta}{\alpha}\right)\right)}_{=\frac{1}{\sqrt{\frac{\beta^2}{\alpha^2}+1}}}
-\alpha\sin\left(\varphi\right)\underbrace{\sin\left(\arctan\left(\frac{\beta}{\alpha}\right)\right)}_{=\frac{\beta}{\alpha\sqrt{\frac{\beta^2}{\alpha^2}+1}}},\\
&\beta\sin\left(\varphi + \arctan\left(\frac{\beta}{\alpha}\right)\right)
=
\beta\sin\left(\varphi\right)\underbrace{\cos\left(\arctan\left(\frac{\beta}{\alpha}\right)\right)}_{=\frac{1}{\sqrt{\frac{\beta^2}{\alpha^2}+1}}}
+ \beta\cos\left(\varphi\right)\underbrace{\sin\left(\arctan\left(\frac{\beta}{\alpha}\right)\right)}_{=\frac{\beta}{\alpha\sqrt{\frac{\beta^2}{\alpha^2}+1}}}\\
\Rightarrow \quad 
&\alpha \cos\left(\varphi + \arctan\left(\frac{\beta}{\alpha}\right)\right) + \beta\sin\left(\varphi + \arctan\left(\frac{\beta}{\alpha}\right)\right)\\
=
&\cos(\varphi) \frac{\alpha+\beta^2}{\sqrt{\frac{\beta^2}{\alpha^2}+1}}
+\sin(\varphi)\left(\frac{\beta}{\sqrt{\frac{\beta^2}{\alpha^2}+1}}-\frac{\beta}{\sqrt{\frac{\beta^2}{\alpha^2}+1}}\right)\\
= &\cos(\varphi)\sqrt{\alpha^2+\beta^2}.
\end{align*}
\end{proof}

\begin{construction}\label{construction:55--nanotube}
A finite dual $(5,5)$--nanotube can be constructed by combining two copies of the cap shown on the left of Figure \ref{fig:N55_construction}, with $r\in\N_0$ hexagonal belts, as depicted on the right in Figure \ref{fig:N55_construction}. 
Each cap contains six vertices of degree $5$ and ten vertices of degree $6$. 
The hexagonal ring consists of $10$ vertices.
Therefore, for a finite dual  $(5,5)$--nanotube with $r$ hexagonal rings, the total number of vertices in the original fullerene is given by
\begin{align*}
n = 60 + 20r.
\end{align*}

\begin{figure}[H]
\begin{center}
\includegraphics[scale=1.2]{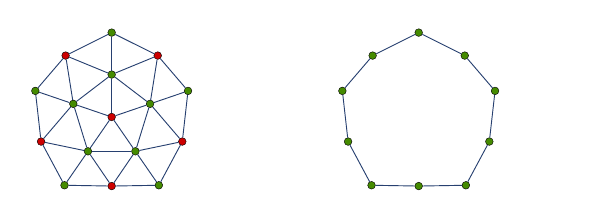}
\caption{Left: Cap of a finite dual $(5,5)$--nanotube. Right: (Dual) Hexagonal ring, which can be appended arbitrarily often to the cap.}\label{fig:N55_construction}
\end{center}
\end{figure}
For $r=0$, i.e. without any hexagonal ring and representing the smallest possible finite $(5,5)$--nanotube, the two caps can be directly connected, as shown in Figure \ref{figure:C60_caps}, resulting in the \textit{Buckminster fullerene} $C_{60,1812}$.
\begin{figure}[H]
\begin{center}
\includegraphics[scale=0.35]{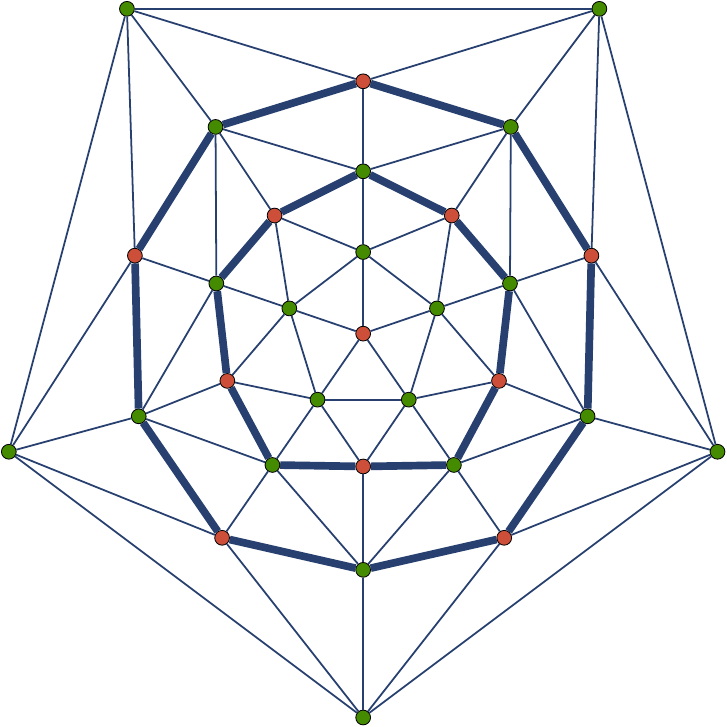}
\caption{Dual graph of the Buckminster fullerene, highlighting two copies of the cap shown in Figure \ref{fig:N55_construction}. The outer edges of the two caps are shown with thicker lines.}\label{figure:C60_caps}
\end{center}
\end{figure}
\end{construction}

%\newpage
%\input{notes}
%\newpage

%\input{notes}

%\input{appendix}

\end{document}